\documentclass[11pt]{article}
\usepackage{amsfonts,mathrsfs,amssymb,amsthm}
\usepackage{mathtools}
\usepackage{eucal}

\usepackage{amsmath,amscd}
\usepackage{pslatex}
\usepackage{color}
\usepackage{xcolor}

\usepackage[all]{xy}
\usepackage{pdfpages}
\usepackage{tikz-cd}
\usepackage{graphicx}
\usepackage{fancyhdr}
\usepackage{tocloft}

\definecolor{myred}{HTML}{B22222}
\definecolor{mygreen}{HTML}{168C40}

\usepackage{ifpdf}
\ifpdf

\usepackage[colorlinks,final,hyperindex,linkcolor=myred,citecolor=mygreen]{hyperref} 
\else
\usepackage[colorlinks,final,backref=page,hyperindex,hypertex,linkcolor=myred,citecolor=mygreen]{hyperref}
\fi

\begin{document}

\theoremstyle{plain}
\newtheorem{defn}{Definition}[section]
\newtheorem{prop}[defn]{Proposition}
\newtheorem{thm}[defn]{Theorem}
\newtheorem{lem}[defn]{Lemma}
\newtheorem{cor}[defn]{Corollary}

\theoremstyle{definition}
\newtheorem{exam}[defn]{Example}

\theoremstyle{remark}
\newtheorem{rem}[defn]{Remark}

\newcommand{\add}{{\rm add}}
\newcommand{\con}{{\rm con}}
\newcommand{\gd}{{\rm gldim}}
\newcommand{\qgd}{{\rm qgldim}}
\newcommand{\sd}{{\rm stdim}}
\newcommand{\sr}{{\rm sr}}
\newcommand{\dm}{{\rm domdim}}
\newcommand{\cdm}{{\rm codomdim}}
\newcommand{\tdim}{{\rm dim}}
\newcommand{\E}{{\rm E}}
\newcommand{\Mor}{{\rm Morph}}
\newcommand{\End}{{\rm End}}
\newcommand{\ind}{{\rm ind}}
\newcommand{\rsd}{{\rm resdim}}
\newcommand{\rd} {{\rm rd}}
\newcommand{\ol}{\overline}
\newcommand{\overpr}{$\hfill\square$}
\newcommand{\rad}{{\rm rad}}
\newcommand{\soc}{{\rm soc}}
\renewcommand{\top}{{\rm top}}
\newcommand{\pd}{{\rm pd}}
\newcommand{\id}{{\rm idim}}
\newcommand{\fld}{{\rm fdim}}
\newcommand{\Fac}{{\rm Fac}}
\newcommand{\Gen}{{\rm Gen}}
\newcommand{\fd} {{\rm findim}}
\newcommand{\qpd} {{\rm qpd}}
\newcommand{\Fd} {{\rm Findim}}
\newcommand{\Pf}[1]{{\mathscr P}^{<\infty}(#1)}
\newcommand{\DTr}{{\rm DTr}}
\newcommand{\cpx}[1]{#1^{\bullet}}
\newcommand{\D}[1]{{\mathscr D}(#1)}
\newcommand{\Dz}[1]{{\mathscr D}^+(#1)}
\newcommand{\Df}[1]{{\mathscr D}^-(#1)}
\newcommand{\Db}[1]{{\mathscr D}^b(#1)}
\newcommand{\C}[1]{{\mathscr C}(#1)}
\newcommand{\Cz}[1]{{\mathscr C}^+(#1)}
\newcommand{\Cf}[1]{{\mathscr C}^-(#1)}
\newcommand{\Cb}[1]{{\mathscr C}^b(#1)}
\newcommand{\Dc}[1]{{\mathscr D}^c(#1)}
\newcommand{\K}[1]{{\mathscr K}(#1)}
\newcommand{\Kz}[1]{{\mathscr K}^+(#1)}
\newcommand{\Kf}[1]{{\mathscr  K}^-(#1)}
\newcommand{\Kb}[1]{{\mathscr K}^b(#1)}
\newcommand{\DF}[1]{{\mathscr D}_F(#1)}
\newcommand{\Kac}[1]{{\mathscr K}_{\rm ac}(#1)}
\newcommand{\Keac}[1]{{\mathscr K}_{\mbox{\rm e-ac}}(#1)}
\newcommand{\modcat}{\ensuremath{\mbox{{\rm -mod}}}}
\newcommand{\Modcat}{\ensuremath{\mbox{{\rm -Mod}}}}
\newcommand{\Spec}{{\rm Spec}}
\newcommand{\stmc}[1]{#1\mbox{{\rm -{\underline{mod}}}}}
\newcommand{\Stmc}[1]{#1\mbox{{\rm -{\underline{Mod}}}}}
\newcommand{\prj}[1]{#1\mbox{{\rm -proj}}}
\newcommand{\inj}[1]{#1\mbox{{\rm -inj}}}
\newcommand{\Prj}[1]{#1\mbox{{\rm -Proj}}}
\newcommand{\Inj}[1]{#1\mbox{{\rm -Inj}}}
\newcommand{\PI}[1]{#1\mbox{{\rm -Prinj}}}
\newcommand{\GP}[1]{#1\mbox{{\rm -GProj}}}
\newcommand{\GI}[1]{#1\mbox{{\rm -GInj}}}
\newcommand{\gp}[1]{#1\mbox{{\rm -Gproj}}}
\newcommand{\gi}[1]{#1\mbox{{\rm -Ginj}}}
\newcommand{\Pmodcat}[1]{#1\mbox{{\rm -Proj}}}
\newcommand{\opp}{^{\rm op}}
\newcommand{\otimesL}{\mathbin{\overset{\raisebox{-0.4ex}{$\scriptstyle\mathbb{L}$}}{\otimes}}}
\newcommand{\rHom}{{\rm\mathbb R}{\rm Hom}}
\newcommand{\pdim}{\pd}
\newcommand{\Hom}{{\rm Hom}}
\newcommand{\Coker}{{\rm Coker}}
\newcommand{ \Ker  }{{\rm Ker}}
\newcommand{ \Cone }{{\rm Con}}
\newcommand{ \Img  }{{\rm Im}}
\newcommand{\Ext}{{\rm Ext}}
\newcommand{\StHom}{{\rm \underline{Hom}}}
\newcommand{\StEnd}{{\rm \underline{End}}}
\newcommand{\KK}{I\!\!K}
\newcommand{\gm}{{\rm _{\Gamma_M}}}
\newcommand{\gmr}{{\rm _{\Gamma_M^R}}}
\def\vez{\varepsilon}\def\bz{\bigoplus}  \def\sz {\oplus}
\def\epa{\xrightarrow} \def\inja{\hookrightarrow}
\newcommand{\lra}{\longrightarrow}
\newcommand{\llra}{\longleftarrow}
\newcommand{\lraf}[1]{\stackrel{#1}{\lra}}
\newcommand{\llaf}[1]{\stackrel{#1}{\llra}}
\newcommand{\ra}{\rightarrow}
\newcommand{\dk}{{\rm dim_{_{k}}}}
\newcommand{\holim}{{\rm Holim}}
\newcommand{\hocolim}{{\rm Hocolim}}
\newcommand{\colim}{{\rm colim\, }}
\newcommand{\limt}{{\rm lim\, }}
\newcommand{\Add}{{\rm Add }}
\newcommand{\Prod}{{\rm Prod }}
\newcommand{\Tor}{{\rm Tor}}
\newcommand{\Cogen}{{\rm Cogen}}
\newcommand{\Tria}{{\rm Tria}}
\newcommand{\Loc}{{\rm Loc}}
\newcommand{\Coloc}{{\rm Coloc}}
\newcommand{\tria}{{\rm tria}}
\newcommand{\Con}{{\rm Con}}
\newcommand{\Thick}{{\rm Thick}}
\newcommand{\thick}{{\rm thick}}
\newcommand{\Sum}{{\rm Sum}}

\newcommand{\Coprod}{{\rm Coprod}}
\newcommand{\coprods}{{\rm coprod}}

\newcommand{\Tsb}{\mathcal{T}^{sb}}

\newcommand{\T}{\mathcal{T}}

\newcommand{\U}{\mathcal{U}}

\newcommand{\Hocolim}{\underrightarrow{{\rm Hocolim}}}
\newcommand{\Holim}{\underleftarrow{{\rm Holim}}}

\newcommand{\lims}{\underleftarrow{{\rm lim}}}

\newcommand{\colims}{\underrightarrow{{\rm colim}}}

\newcommand{\spec}{{\rm Spec}}

\newcommand{\R}{\mathbb{R}}

\begin{center}
{\Large {\bf Bounded $t$-structures on the category of strongly bounded objects}}
\end{center}

\centerline{\footnotesize{Hongxing Chen$^*$, Xiaohu Chen and Jinbi Zhang}}

\renewcommand{\cftsubsecleader}{\hfill}

\setlength{\cftsubsecindent}{0pt}

\renewcommand{\cftsecfont}{\mdseries\normalsize}       
\renewcommand{\cftsubsecfont}{\mdseries\normalsize}    
\renewcommand{\cftsubsubsecfont}{\mdseries\normalsize} 

\renewcommand{\cftsecpagefont}{\mdseries\normalsize}
\renewcommand{\cftsubsecpagefont}{\mdseries\normalsize}

\renewcommand{\thefootnote}{\alph{footnote}}
\setcounter{footnote}{-1} \footnote{ $^*$ Corresponding author.
Email: chenhx@cnu.edu.cn.}
\renewcommand{\thefootnote}{\alph{footnote}}
\setcounter{footnote}{-1} \footnote{2020 Mathematics Subject
Classification: Primary 18G80; Secondary 18G20, 16E35, 16G10.}
\renewcommand{\thefootnote}{\alph{footnote}}
\setcounter{footnote}{-1} \footnote{Keywords: Bounded $t$-structure; finite strong finitistic dimension; Gorenstein triangulated category; weak approximation.}

\begin{abstract}
Strongly bounded objects in a weakly approximable triangulated category are known to relate closely to global dimension and to play a significant role in the uniqueness problem for triangulated enhancements. In this paper, we investigate when the full subcategory of strongly bounded objects admits a bounded $t$-structure. Our main result, under a finiteness condition termed the finite strong finitistic dimension, states that this happens exactly when the subcategory agrees with the full subcategory of bounded objects in the ambient triangulated category. In that case, the bounded $t$-structure is unique up to equivalence; even more, the uniqueness holds unconditionally on the full subcategory of bounded objects.
\end{abstract}


\section{Introduction}\label{Introduction}

Bounded $t$-structures on triangulated categories were originally introduced by Beilinson, Bernstein and Deligne (see \cite{BBD}) in their foundational work on perverse sheaves on algebraic and analytic varieties. Their existence has been a key prerequisite for the discussion of stability conditions on a triangulated category (see \cite{B07}). In recent years, substantial progress has been made toward
a systematic understanding of when bounded $t$-structures exist on a given triangulated category (see \cite{AGH19,S22,N24A,BCRPZ24}). For example, Neeman proved in \cite{N24A} a conjecture by Antieau, Gepner and Heller regarding the equivalence between the regularity of finite-dimensional noetherian schemes and the existence of bounded $t$-structures on their derived categories of perfect complexes. More generally, for any essentially small triangulated category under a finiteness assumption, a categorical obstruction (the singularity category) to the existence of bounded $t$-structures on it was established in \cite{BCRPZ24}. In the special case of the subcategory of compact
objects in a compactly generated triangulated category with a single compact generator, the existence of
a bounded $t$-structure forces this subcategory to coincide with the category of bounded pseudo-compact
objects.

In this paper, we address a parallel question regarding the existence of bounded $t$-structures on the category of \emph{strongly bounded objects} in a weakly approximable triangulated category. This category, first introduced by Neeman in \cite{N25}, can be regarded as an analogue of the homotopy category of bounded complexes of projective modules over associative rings. In \cite{CCZ26}, it was shown to have strong connections with the global dimension of triangulated categories. Furthermore, as demonstrated in \cite{CNS26}, it plays a significant role in the study of the uniqueness of enhancements of the natural subcategories of weakly approximable triangulated categories.

Let $\T$ be a \emph{compactly generated triangulated category generated by a compact generator $G$}. Associated with $G$, there is a canonical $t$-structure~$(\T_{G}^{\leq 0}, \T_{G}^{\geq 1})$ (see Lemma~\ref{lem-p-w-gen}) on $\T$ and its equivalence class is called the \emph{preferred equivalence class} in \cite[Definition~1.18]{N18}. We are interested in the following triangulated subcategories of $\T$ that are independent of the choices of compact generators of $\T$ and $t$-structures in the preferred equivalence class (see \cite{CCZ26,N18,N25}).

\vspace{0.28em}

$\bullet$ \emph{Bounded above objects}: $\T^-\coloneqq \bigcup_{n\in\mathbb{N}}\T_{G}^{\leq n}$;

\vspace{0.28em}

$\bullet$ \emph{Bounded below objects}: $\T^+\coloneqq \bigcup_{n\in\mathbb{N}}\T_{G}^{\geq-n}$;

\vspace{0.28em}

$\bullet$ \emph{Bounded objects}: $\T^b\coloneqq \T^-\cap\T^+$;

\vspace{0.25em}

$\bullet$ \emph{Compact objects}: $\T^c\coloneqq\bigcup_{n\in\mathbb{N}}{\langle G\rangle}^{[-n,\,n]}$;

\vspace{-0.11em}
$\bullet$ \emph{Strongly bounded objects}: $\T^{sb}\coloneqq\bigcup_{n\in\mathbb{N}}\overline{\langle G\rangle}^{[-n,\,n]}$;

\vspace{0.28em}
$\bullet$ \emph{Bounded pseudo-compact objects}:
$$\T^b_c\coloneqq\T^b\cap(\bigcap_{n\in\mathbb{N}}{(\T^{c}\ast\T_G^{\leq{-n}})}).\vspace{-0.3em}$$

By definition, the \emph{extension} of the full subcategories $\mathcal{X}$ and $\mathcal{Y}$ of $\T$, denoted by $\mathcal{X} \ast \mathcal{Y}$, is the full subcategory of $\mathcal{T}$ consisting of objects $Z$ such that there is a (distinguished) triangle $X\rightarrow Z \rightarrow Y \rightarrow X[1]$ in $\mathcal{T}$ with $X \in \mathcal{X}$ and $Y \in \mathcal{Y}$. If $\mathcal{X} \ast \mathcal{X}\subseteq \mathcal{X}$, then $\mathcal{X}$ is said to be \emph{closed under extensions} in $\T$. Moreover, for integers $a\leq b$, the category ${\langle G \rangle}^{[a,\,b]}$ (respectively, $\overline{\langle G \rangle}^{[a,\,b]}$) denotes the smallest full subcategory of $\T$ containing $G[-n]$ for all integers $n\in [a,b]$ and closed under direct summands, finite direct sums (respectively, coproducts) and extensions.

Note that the inclusion $\T^c\subseteq \T^b$ holds if and only if $G$ is a bounded object, which is equivalent to the vanishing of $\Hom_{\mathcal{T}}(G, G[i])$ for $i\ll 0$. In our discussions, we always assume that $G$ is bounded, and therefore $\T^{sb}\subseteq \T^b$. This may explain why the objects of
$\T^{sb}$ are said to be \emph{strongly bounded}.

We now illustrate the above-mentioned six triangulated categories by an example. Let $\T:=\mathscr{D}(R\Modcat)$ be the unbounded derived category of a left coherent ring $R$. Then there are equalities (up to canonical equivalence):
$$\T^-=\mathscr{D}^{-}(R\Modcat),\; \T^+=\mathscr{D}^{+}(R\Modcat), \; \T^b=\mathscr{D}^b(R\Modcat),$$
$$\T^c=\mathscr{K}^b(\prj{R}), \;\T^{sb}=\mathscr{K}^b(R\text{-}{\rm Proj}),\; \T^b_c=\mathscr{D}^b(R\modcat),$$
where $R\Modcat$, $R\modcat$, $R\prj$ and $R\Prj$ denote the categories of (left) $R$-modules, finitely presented, finitely generated projective and projective $R$-modules, respectively.

As a vast generalization of derived module categories, Neeman introduced the notion of weakly approximable triangulated categories. This class of triangulated categories generalizes many important triangulated categories in practice, including the derived category of a non-positive differential graded ring, the derived category $\mathscr{D}_{\mathrm{qc}}(X)$ of unbounded complexes of $\mathscr{O}_X$-modules with quasi-coherent cohomology for a quasi-compact, quasi-separated scheme $X$, and the homotopy category of spectra. Weakly approximable triangulated categories have been a reasonable framework for addressing longstanding conjectures and providing new proofs and generalizations of some known theorems (for instance, see \cite{BCRPZ24,CHNS24,N21,N24A,N18,SZZ24}).

\vspace{0.15em}
Let us return to the abstract triangulated category $\mathcal{T}$. It is easy to see that every $t$-structure $(\T^{\leq 0},\T^{\geq 1})$ on $\mathcal{T}$ in the preferred equivalence class can be restricted to a bounded $t$-structure $(\T^{\leq 0}\cap\T^b,\T^{\geq 1}\cap\T^b)$ on the triangulated category $\T^b$, and all these bounded $t$-structures are equivalent. {However, $\mathcal{T}^c$ may not have any bounded $t$-structure.} A general discussion of the existence of a bounded $t$-structure on $\mathcal{T}^c$ (even on any {essentially  small} triangulated category) was done in \cite{BCRPZ24} by introducing the concept of finitistic dimension for triangulated categories. Following \cite[Definition 1.3]{BCRPZ24},  a triangulated category {$\mathcal{C}$} is said to have \emph{finite finitistic dimension} if there exists an object $U\in\mathcal{C}$ and an integer $n$ such that
$$\{X\in \mathcal{C}\mid \Hom_\mathcal{C}(U, X[i])=0, \; \forall\, i\leq -1\}\subseteq\bigcup_{n\le m}{\langle U\rangle}^{[n,\,m]}.\vspace{-0.5em}$$
This homological condition is not so strict since several classes of triangulated categories have been shown to have finite finitistic dimension; see \cite[Section 1.1 and Section 4]{BCRPZ24} for details.

The following result is a special case of \cite[Theorem 1.5]{BCRPZ24} which provides a categorical obstruction to the existence of bounded $t$-structures in terms of completions of triangulated categories.

\begin{thm}{\rm \cite[Corollary 1.7]{BCRPZ24}}\label{Compact}
Suppose that $\Hom_\mathcal{T}(G, G[i])=0$ for $i\gg 0$ and  the opposite category of $\mathcal{T}^{c}$ has finite finitistic dimension.

$(1)$ If $\mathcal{T}^{c}$ has a bounded $t$-structure, then $\mathcal{T}^{c}=\mathcal{T}_c^b$.

$(2)$ If $\mathcal{U}$ is a full triangulated subcategory of $\mathcal{T}$ with $\mathcal{T}^{c}\subseteq \mathcal{U}\subseteq \mathcal{T}_c^b$, then all bounded $t$-structures on $\mathcal{U}$ are equivalent.
\end{thm}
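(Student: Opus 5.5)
This statement is cited from \cite[Corollary 1.7]{BCRPZ24} as a special case of \cite[Theorem 1.5]{BCRPZ24}, so the plan is to check that our situation meets the hypotheses of that theorem and then read off both conclusions. Set $\mathcal{C}:=\T^c$, which is essentially small. The setting of \cite[Theorem 1.5]{BCRPZ24} is an essentially small triangulated category whose opposite has finite finitistic dimension, together with its completion with respect to a good metric; conclusion (1) there says a bounded $t$-structure on $\mathcal{C}$ forces the singularity category $\mathfrak{S}(\mathcal{C})$ to vanish, and conclusion (2) says bounded $t$-structures are unique up to equivalence on any triangulated $\mathcal{U}$ squeezed between $\mathcal{C}$ and the bounded part of the completion.

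The first step is to identify the relevant completion with $\T^b_c$. Take the metric on $\T^c$ given by the preferred $t$-structure, $\mathcal{M}_n:=\T^c\cap\T_G^{\leq -n}$. By Neeman's results on weakly approximable categories, $\T$ is weakly approximable, and the completion $\mathfrak{S}(\T^c)$ of $\T^c$ with respect to this metric is $\T^-_c\cap\T^b=\T^b_c$, using that $\T^b_c\subseteq\T^b$ by definition. The hypothesis $\Hom_\T(G,G[i])=0$ for $i\gg0$ ensures that $G$ is pseudo-compact enough for the metric to be \emph{good} and for $\T^c\subseteq\T^-_c$. With $G$ bounded, it also gives $\T^c\subseteq\T^b_c$, so the quotient $\T^b_c/\T^c$ makes sense.

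Next, apply \cite[Theorem 1.5(1)]{BCRPZ24}: a bounded $t$-structure on $\T^c$, together with finite finitistic dimension of $(\T^c)\opp$, gives $\T^b_c/\T^c=0$. Since $\T^c$ is a thick subcategory of $\T$, this yields $\T^c=\T^b_c$, which proves (1). For (2), apply \cite[Theorem 1.5(2)]{BCRPZ24} directly to any $\mathcal{U}$ with $\T^c\subseteq\mathcal{U}\subseteq\T^b_c$.

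The main obstacle is the identification of the completion with $\T^b_c$, together with checking the goodness condition on the metric. Both rely on Neeman's approximability machinery rather than on anything proved in this paper, so in practice this step is an appeal to \cite{BCRPZ24} and \cite{N18}.
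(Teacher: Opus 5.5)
Your route is the paper's. The statement is not proved in the paper but quoted from \cite[Corollary 1.7]{BCRPZ24}, which is \cite[Theorem 1.5]{BCRPZ24} applied to $\T^c$ once the completion of $\T^c$ for the metric $\{\T^c\cap\T_G^{\leq -n}\}_{n}$ is identified with $\T^b_c$. Your deduction of (1) from the vanishing of the quotient $\T^b_c/\T^c$ together with thickness of $\T^c$ is fine, and so is reading off (2) directly.

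One step is wrong as written: you assert that $\T$ is weakly approximable. In this statement $\T$ is only compactly generated by a single $G$ with $\Hom_\T(G,G[i])=0$ for $i\gg0$. Weak approximability is an extra hypothesis only in later results such as Theorem~\ref{main1}, and you give no reason why it should hold here. As written, your argument therefore covers only the weakly approximable case.

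You do not need that assertion. The identification of the completion of $\T^c$ with $\T^b_c$ is the half of Neeman's theorem that $\T^c$ and $\T^b_c$ determine each other which requires only $\Hom_\T(G,G[i])=0$ for $i\gg0$; approximability is used only for the reverse direction.

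You also misstate what that Hom-vanishing hypothesis does. It is not what gives $\T^c\subseteq\T^-_c$; that inclusion is automatic, since $G\in\T_G^{\leq 0}$. Its real role is this: the hypothesis is equivalent to $G\in{}^{\perp}(\T_G^{\leq -A})$ for some $A>0$. Hence every compact object lies in ${}^{\perp}(\T_G^{\leq -n})$ for $n\gg0$. This lets the triangles $E_n\to F\to D_n$ (with $E_n\in\T^c$, $D_n\in\T_G^{\leq -n}$) of an object $F\in\T^-_c$ be assembled into a Cauchy sequence in $\T^c$, exactly as in the construction of $h_1$ in the proof of Lemma~\ref{lem-strong approximating system}(3).

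Finally, the inclusion $\T^c\subseteq\T^b_c$ that you use requires $G$ to be bounded. This is not among the displayed hypotheses, but it is the paper's standing convention from the introduction, and you should invoke it explicitly.
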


In this paper, we consider the question on the existence and uniqueness of bounded $t$-structures on the triangulated category $\T^{sb}$, an infinite-dimensional version of $\T^{c}$.  Our discussion depends on the following homological condition which plays a similar role in Theorem~\ref{Compact} as the finiteness of finitistic dimension.

\begin{defn}\label{defn-fd2}
We say that $\T$ has \emph{finite strong finitistic dimension} if there exists an integer $m$ such that\vspace{-0.4em}
$$\{X\in \T^{sb}\mid \Hom_\T(X, G[i])=0, \; \forall\, i\leq -1\}\subseteq\bigcup_{n\le m}\overline{\langle G\rangle}^{[n,\,m]},\vspace{-1em}$$
where $G$ is a compact generator of $\T$ and the category $\overline{\langle G \rangle}^{[n,\,m]}$ is a full subcategory of $\mathcal{T}$.
\end{defn}

Clearly, the finiteness of strong finitistic dimension is independent of the choice of compact generators. The terminology ``finite strong finitistic dimension" is motivated by the following proposition. For the precise definitions of weakly approximable (see \cite{N18}) and Gorenstein triangulated categories, we refer to Definitions~\ref{defn-app} and \ref{Gorenstein}, respectively. By Example \ref{Gexam}, the derived category of a non-positive Gorenstein differential graded algebra and $\mathscr{D}_{\mathrm{qc}}(X)$ for a proper Gorenstein scheme $X$ are weakly approximable and Gorenstein. Both categories have bounded compact generators.

\begin{prop}\label{FGD}
$(1)$ If $\T$ is a Gorenstein, weakly approximable triangulated category with a bounded compact generator, then it has finite strong finitistic dimension.

$(2)$ If $\T$ has finite strong finitistic dimension, then the opposite category of $\T^{\rm c}$ has finite finitistic dimension.
\end{prop}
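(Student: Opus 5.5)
For part (2), I will take $U:=G$, regarded as an object of $(\T^c)\opp$. Then $\Hom_{(\T^c)\opp}(G,X[i])=\Hom_\T(X,G[-i])$, so the test class for finite finitistic dimension of $(\T^c)\opp$ is
$$\{X\in\T^c\mid \Hom_\T(X,G[i])=0,\ \forall\, i\ge 1\}.$$
The condition in Definition~\ref{defn-fd2} instead concerns $i\le -1$. This is only a sign convention: $\langle G\rangle^{[a,b]}$ in $\T^c$ corresponds, in the opposite category, to $\langle G\rangle^{[-b,-a]}$ with the shift reversed. I will therefore recheck the orientation carefully and write the inclusion in the form that matches Definition~\ref{defn-fd2}.

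The main step is to show that a compact object lying in $\overline{\langle G\rangle}^{[n,m]}$ already lies in $\langle G\rangle^{[n',m']}$ for a controlled range. The input is Neeman's result (\cite{N18}) that $\T^c\cap\overline{\langle G\rangle}^{[n,m]}\subseteq\langle G\rangle^{[n,m]}$, obtained via a compactness argument: a compact object that is a summand of a filtered homotopy colimit factors through a finite stage. If I cannot cite it directly, I will prove it by induction on $m-n$ as follows.
\begin{itemize}
\item Write $X$ as an extension of $A\in\overline{\langle G\rangle}^{[n,m-1]}$ and $B\in\Coprod(G[-m])$, up to summands.
\item Use the compactness of $X$ to factor $X\to B$ through a finite subcoproduct.
\item Apply an octahedral argument to move the remaining part into the smaller interval.
\end{itemize}
Given this inclusion, the $m$ from Definition~\ref{defn-fd2} works for $(\T^c)\opp$ with $U=G$, since $\T^c\subseteq\T^{sb}$.

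For part (1), I will take $X\in\T^{sb}$ with $\Hom(X,G[i])=0$ for all $i\le -1$. Being Gorenstein, I expect $G$ to have finite injective dimension relative to the preferred $t$-structure. Concretely, there should be an $N$ such that $\Hom(\T^{\le -N}_G,G)=0$, or equivalently such that $G$ lies in a coaisle up to a uniform shift.

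I will combine this with the following fact: an object of $\T^{sb}$ in $\overline{\langle G\rangle}^{[a,b]}$ that is also in $\T_G^{\le m}$ lies in $\overline{\langle G\rangle}^{[a,m+A]}$ for a uniform constant $A$. This upper truncation comes from weak approximability, by approximating with objects of $\overline{\langle G\rangle}^{[-A,A]}$ and peeling off the top layers. For the lower bound, the vanishing of $\Hom(X,G[i])$ for $i\le -1$, combined with the Gorenstein duality, should force the lower end of the strong filtration to be split off. The idea is that the last layer $\Coprod(G[-b])$ maps split-surjectively, or rather the map $X\to\Coprod(G[-b])$ must vanish, for large $b$ beyond $m$. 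This iterates to cut the range down to $[n,m]$ with $m$ independent of $X$.

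The main obstacle is this trimming step in (1), which requires a precise relation between the Gorenstein property and the vanishing of maps into coproducts of $G$. I would first try to show that $\Hom(-,\Coprod G[j])$ behaves like $\Hom(-,G[j])$ on $\T^{sb}$ up to bounded shifts, using that $\Coprod G$ has bounded injective-type dimension in the Gorenstein setting.
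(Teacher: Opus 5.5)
Part (1) has a genuine gap, exactly at the step you flag as the main obstacle: nothing in your plan converts the hypothesis $\Hom_\T(X,G[i])=0$ for $i\le -1$ into a uniform bound on $X$.

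\begin{itemize}
\item The ``finite injective dimension'' you expect, $\Hom_\T(\T_G^{\le -N},G)=0$, just says $G\in\T_G^{\ge 1-N}$, i.e.\ that $G$ is bounded. It uses nothing Gorenstein and says nothing about $X$.
\item The trimming step aims at the wrong morphism. Already in $\D{\Lambda\Modcat}$, for a complex of projectives in degrees $[a,b]$, the top layer $P^b[-b]\in\Coprod(G[-b])$ is a \emph{subobject} of $X$. It splits off when the inclusion $P^b[-b]\to X$ vanishes, which is guaranteed by $\Hom_\T(G[-b],X)=0$. That is a condition on maps \emph{into} $X$ (an aisle condition), not on maps $X\to\Coprod(G[-b])$.
\item In general an object of $\overline{\langle G\rangle}^{[n,\,m]}$ cannot even be written, up to summands, as an extension with quotient in $\Coprod(G[-m])$. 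Take $\Lambda\xrightarrow{x}\Lambda$ in degrees $m-1,m$ over $\Lambda=F[x]/(x^2)$: its $H^m\simeq F$ would have to be a summand of a projective module. This also undermines the inductive fallback you sketch for (2).
\item $\Hom_\T(X,\Coprod G[j])$ for non-compact $X$ is not controlled by anything available. So the ``injective-type dimension of $\Coprod G$'' route has no footing in the abstract setting.
\end{itemize}

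The missing idea is the Brown--Comenetz dual $E$ of $G$ (Definition~\ref{def bcdual}), with $\Hom_\T(X,E[i])\simeq D\Hom_\T(G[i],X)$. This converts Hom-vanishing out of $X$ into Hom-vanishing into $X$. The Gorenstein condition $\langle G\rangle=\langle E\rangle$ is precisely what lets you trade $G$ for $E$. The paper's argument runs as follows.
\begin{itemize}
\item Switch the compact generator to $E$; this is allowed since the property is independent of the generator. The hypothesis then becomes $X\in G[1,+\infty)^{\perp}$, whence $X[A]\in\T^{\le 0}$ by \cite[Lemma 3.9(iv)]{BNP23}.
\item Strong boundedness gives $X[A]\in{}^{\perp}(\T^{\le -b})$ for some $b$. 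So by Lemma~\ref{lem-app-induction}, $X[A]$ is a summand of $C_b\in\overline{\langle G\rangle}^{[1-b-A,\,A]}$; this is your upper-truncation step.
\item Finally $G\in\langle E\rangle^{[-k,\,k]}$ converts back to the uniform upper index $2A+k$.
\end{itemize}
Equivalently, keeping $G$: if $E\in\langle G\rangle^{[-\ell,\,\ell]}$, your hypothesis kills $\Hom_\T(X,E[i])$ for $i\le -1-\ell$. Hence it kills $\Hom_\T(G[i],X)$ for those $i$, so $X\in\T^{\le A+\ell}$ uniformly, and your truncation fact finishes.

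Part (2) is the paper's argument; the inclusion you need is \cite[Lemma 1.8(ii)]{N21}. The orientation is settled by recalling that $(\T^c)\opp$ has suspension $\Sigma=[-1]$. Then $\Hom_{(\T^c)\opp}(G,\Sigma^iX)=\Hom_\T(X,G[i])$ gives exactly the class of Definition~\ref{defn-fd2}. Also $\langle G\rangle^{[s,\,t]}$ becomes $\langle G\opp\rangle^{[-t,\,-s]}$, turning the uniform upper bound $t$ into the uniform lower bound required for finite finitistic dimension.
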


Our main result of this paper reads as follows.

\begin{thm}\label{main1}
Let $\mathcal{T}$ be a weakly approximable triangulated category with a bounded compact generator. The following statements are true.

$(1)$ Up to equivalence, there exists a unique bounded $t$-structure on $\T^b$.

$(2)$ Suppose that $\T$ has finite strong finitistic dimension. Then any full triangulated subcategory $\U$ of $\T$ with $\T^{sb}\subseteq \U\subsetneq \mathcal{T}^b$ admits no bounded $t$-structure. Thus $\Tsb$ admits a bounded $t$-structure if and only if $\Tsb = \T^{b}$.
\end{thm}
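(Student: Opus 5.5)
For part (1), existence is clear: the standard $t$-structure $(\T_G^{\leq 0},\T_G^{\geq 1})$ restricts to a bounded $t$-structure on $\T^b$. For uniqueness, let $(\mathcal{A}^{\leq 0},\mathcal{A}^{\geq 1})$ be any bounded $t$-structure on $\T^b$. I will compare it with the standard one, and for this I will use the two boundedness conditions in turn.

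\emph{Step 1.} Since $G\in\T^c\subseteq\T^b$, boundedness gives $G\in\mathcal{A}^{\leq a}\cap\mathcal{A}^{\geq -a}$ for some $a$. The aperture $\mathcal{A}^{\leq 0}$ is closed under extensions, positive shifts and direct summands, and also under those coproducts that exist in $\T^b$. The last point holds because a coproduct in $\T$ of objects in $\T^b$ that is itself bounded computes the coproduct in $\T^b$, and aisles are closed under the coproducts that exist. Hence $\T_G^{\leq -a}\cap\T^b\subseteq\mathcal{A}^{\leq 0}$. This uses weak approximability: every object of $\T_G^{\leq 0}\cap\T^b$ is a homotopy colimit of objects of $\overline{\langle G\rangle}^{[-A,\,\infty)}$, and via approximating triangles it lies in $\overline{\langle G\rangle}^{[-A,\,N]}\ast\T^{\leq -N}$ with $N$ large. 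I will truncate using the boundedness of $X$.

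\emph{Step 2.} For the reverse inclusion, $X\in\mathcal{A}^{\leq 0}$ has $\Hom(G[-n],X)$ controlled: since $G\in\mathcal{A}^{\geq -a}$, we get $\Hom(X,G[i])=0$ for $i\ll 0$. That is the wrong variance, so I will instead use $\Hom(G,X[i])=\Hom(G[-i],X)$ together with the fact that $\mathcal{A}^{\geq 1}$ contains $\T_G^{\geq b}\cap\T^b$ (the orthogonal of Step 1). Orthogonality then forces $\mathcal{A}^{\leq 0}\subseteq\T_G^{\leq b}$. The two inclusions give equivalence. I expect Step 1 to be the main obstacle, namely showing that objects of $\T^b\cap\T_G^{\leq 0}$ lie in the extension closure under bounded coproducts. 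This is where weak approximability enters, and I would first try to approximate $X$ by objects $E_N\in\overline{\langle G\rangle}^{[-A,\,N]}$ with cone in $\T_G^{\leq -N-1}$, and then use that the map $X\to$ (cone) vanishes when $X$ is bounded and $N$ is large.

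For part (2), suppose $\U$ has a bounded $t$-structure. As in Step 1, there is $a$ with $\overline{\langle G\rangle}^{(-\infty,\,-a]}\cap\U\subseteq\mathcal{U}^{\leq 0}$ and $G\in\mathcal{U}^{\geq -a}$. Take $X\in\T^b$; I aim to show $X\in\Tsb$. Finite strong finitistic dimension is the tool: an object $Y\in\Tsb$ with $\Hom(Y,G[i])=0$ for all $i\le -1$ lies in $\overline{\langle G\rangle}^{[n,\,m]}$, so its ``length on the left'' is uniform. Approximate $X$ by $E_N\to X\to D_N$ with $E_N\in\overline{\langle G\rangle}^{[-A,\,N]}\subseteq\Tsb\subseteq\U$ and $D_N\in\T_G^{\le -N-1}$. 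Then $D_N\in\U$ is bounded, hence lies in $\mathcal{U}^{\geq -c}$ for some $c$, while also lying deep in $\mathcal{U}^{\leq 0}$-type regions as $N$ grows. Truncating $E_N$ in the $\U$-$t$-structure, I will use the finitistic condition to bound the $\Tsb$-length of the relevant truncation independently of $N$. This shows $E_N\to X$ splits off appropriately, giving $X$ as a direct summand of an object of $\Tsb$, hence $X\in\Tsb$ and $\U=\T^b$, a contradiction. The uniform bound is the delicate step here. The final ``thus'' then follows by combining this with part (1).
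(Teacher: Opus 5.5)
Both parts have genuine gaps.

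\textbf{Part (1).} Step 2 runs the orthogonality backwards. The right orthogonal in $\T^b$ of the Step 1 inclusion $\T_{G}^{\leq -a}\cap\T^b\subseteq\mathcal{A}^{\leq 0}$ is $\mathcal{A}^{\geq 1}\subseteq\T_{G}^{\geq 1-a}\cap\T^b$. It is not $\T_{G}^{\geq b}\cap\T^b\subseteq\mathcal{A}^{\geq 1}$. That second inclusion is equivalent to $\mathcal{A}^{\leq 0}\subseteq\T_{G}^{\leq b-1}$, which is what you are trying to prove, so Step 2 is circular. You correctly spotted the variance problem. Solving it needs an object of $\T^b$ that turns $\Hom_{\T}(G[i],X)$ into a Hom out of $X$. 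The paper uses the Brown--Comenetz dual $E$ of $G$, defined by $\Hom_{\T}(-,E)\cong D\Hom_{\T}(G,-)$ with $D=\Hom_{\mathbb{Z}}(-,\mathbb{Q}/\mathbb{Z})$.
\begin{itemize}
\item Since $\Hom_{\T}(G[i],E)\cong D\Hom_{\T}(G,G[i])=0$ for $|i|\gg 0$, we get $E\in\T^b$.
\item Boundedness of the $t$-structure then gives $E\in\mathcal{A}^{\geq m+1}$ for some $m$.
\item For $X\in\mathcal{A}^{\leq m}$ this yields $D\Hom_{\T}(G[i],X)\cong\Hom_{\T}(X,E[i])=0$ for $i\leq 0$, so $X\in\T_{G}^{\leq A-1}$ by weak approximability.
\end{itemize}

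Step 1 also fails as proposed. The claim that $X\to D_N$ vanishes for bounded $X$ and large $N$ is false. If it held, $X$ would be a direct summand of $E_N\in\Tsb$, giving $\T^b=\Tsb$. Indeed, for $X\in\T^b$, vanishing of $\Hom_{\T}(X,\T_{G}^{\leq -N})$ for some $N$ is exactly the criterion for $X\in\Tsb$. No approximation is needed. Take the left orthogonal ${}^{\perp}(\mathcal{A}^{\geq 1})$ computed in the ambient $\T$:
\begin{itemize}
\item it is closed under all coproducts of $\T$, extensions, summands and positive shifts;
\item it contains $G[a]$, hence contains $\overline{\langle G\rangle}^{(-\infty,-a]}=\T_{G}^{\leq -a}$;
\item its intersection with $\T^b$ is exactly $\mathcal{A}^{\leq 0}$.
\end{itemize}

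\textbf{Part (2).} Your target is wrong. You try to show every $X\in\T^b$ lies in $\Tsb$, and you use $\U\neq\T^b$ only at the very end. The same sketch, applied to $\U=\T^b$ with its standard bounded $t$-structure, would then prove $\Tsb=\T^b$ whenever $\T$ has finite strong finitistic dimension. That is false, for example, for $\mathscr{D}(\Lambda\Modcat)$ with $\Lambda$ non-semisimple self-injective: $\Lambda$ is Gorenstein, so the category has finite strong finitistic dimension, but $\Lambda$ has infinite global dimension. The step ``$D_N\in\U$'' also fails exactly when $X\notin\U$, because $E_N\in\U$.

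What must be shown is $X\in\U$. The missing idea is to lift the $t$-structure to $\T^b$ while keeping its coaisle.
\begin{itemize}
\item First obtain $\T_{G}^{\leq -k}\cap\U\subseteq\U^{\leq 0}$ by the ambient left-orthogonal trick above.
\item Take a strong $\Tsb$-approximating system $E_1\to E_2\to\cdots$ with $\Hocolim(E_*)\cong X$, where the cones of $E_i\to E_{i+1}$ lie in $\T_{G}^{\leq -i}$. Truncate each $E_i$ in $\U$.
\item The $3\times 3$ lemma shows $E_i^{\geq 1}\to E_{i+1}^{\geq 1}$ is an isomorphism for $i\geq k$. This gives a triangle $Y\to X\to E_k^{\geq 1}\to Y[1]$ with $Y\in{}^{\perp}(\U^{\geq 1})$.
\item Hence $(\T^b\cap{}^{\perp}(\U^{\geq 1}),\U^{\geq 1})$ is a $t$-structure on $\T^b$.
\end{itemize}

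Finite strong finitistic dimension enters only to make this lifted structure bounded below. Suppose $G\in\U^{\geq s}$. Then
$\Tsb\cap{}^{\perp}(\U^{\geq s+1})\subseteq\Tsb\cap{}^{\perp}(G[1,+\infty))\subseteq\bigcup_{n\leq B}\overline{\langle G\rangle}^{[n,\,B]}$,
and the right orthogonal of the last class is $\T_{G}^{\geq B+1}$. An approximation argument shows that $\Tsb\cap{}^{\perp}(\U^{\geq s+1})$ and $\T^b\cap{}^{\perp}(\U^{\geq s+1})$ have the same right orthogonal in $\T^b$, namely $\U^{\geq s+1}$. So $\T_{G}^{\geq B+1}\cap\T^b\subseteq\U^{\geq s+1}$. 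Therefore every object of $\T^b$ lies in some $\U^{\geq v}\subseteq\U$, contradicting $\U\subsetneq\T^b$.
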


Recall from \cite[Lemma 5.2(2)]{CCZ26} that, for a weakly approximable triangulated category $\T$ with a bounded compact generator, the equality $\Tsb = \T^{b}$ holds if and only if $\T$ has \emph{finite global dimension}, that is, there is a natural number $d$ such that $\Hom_{\mathcal{T}}(\mathcal{T}_G^{\geq 0}\cap\mathcal{T}^{b},\mathcal{T}_G^{\leq-(d+1)}\cap\mathcal{T}^{b})=0$. For the case $\T=\mathscr{D}(S\Modcat)$ with an ordinary ring $S$, we see from \cite[Remark 5.3]{CCZ26} that $\T$ has finite global dimension if and only if $S$ has finite global dimension. For the case $\T=\mathscr{D}_{\mathrm{qc}}(X)$ with a finite-dimensional noetherian scheme $X$, we see from Lemma \ref{lasttech} that $\T$ has finite global dimension if and only if $X$ is regular.
Thus a combination of Theorem \ref{main1}(2) with Proposition \ref{FGD}(1) implies the following result.

\begin{cor}\label{mainapp}
$(1)$ Let $\Lambda$ be a Gorenstein Artin algebra. Then $\Kb{\Pmodcat{\Lambda}}$ admits a bounded $t$-structure if and only if $\Lambda$ has finite global dimension.

$(2)$ Let $X$ be a finite-dimensional noetherian scheme with a proper, Gorenstein morphism $X\to\Spec(R)$ for a commutative Gorenstein Artin ring $R$. Then $\mathscr{D}_{\mathrm{qc}}(X)^{sb}$ admits a
bounded $t$-structure if and only if $X$ is regular.
\end{cor}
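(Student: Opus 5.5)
Both parts will be derived from Theorem~\ref{main1}(2) together with Proposition~\ref{FGD}(1), plus the characterization from \cite[Lemma 5.2(2)]{CCZ26} that $\Tsb=\T^b$ exactly when $\T$ has finite global dimension. The work is to realize each category as $\T^{sb}$ for a suitable weakly approximable, Gorenstein $\T$ with a bounded compact generator. Then finite global dimension of $\T$ has to be translated into the classical condition.

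\emph{Part (1).} We take $\T=\mathscr{D}(\Lambda\Modcat)$ with compact generator $G=\Lambda$. Then $\Hom_\T(\Lambda,\Lambda[i])=0$ for $i\neq 0$, so $G$ is bounded. Since $\Lambda$ is a ring, $\T$ is weakly approximable. By Example~\ref{Gexam} (the case of a non-positive Gorenstein dg algebra concentrated in degree $0$), $\T$ is Gorenstein. Proposition~\ref{FGD}(1) then shows that $\T$ has finite strong finitistic dimension. An Artin algebra is left noetherian, hence left coherent, so the introduction gives $\T^{sb}=\Kb{\Pmodcat{\Lambda}}$ and $\T^b=\mathscr{D}^b(\Lambda\Modcat)$. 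Theorem~\ref{main1}(2) says $\Kb{\Pmodcat{\Lambda}}$ has a bounded $t$-structure if and only if $\Tsb=\T^b$. By \cite[Lemma 5.2(2)]{CCZ26} this holds if and only if $\T$ has finite global dimension, and by \cite[Remark 5.3]{CCZ26} that happens exactly when $\Lambda$ has finite global dimension.

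\emph{Part (2).} We take $\T=\mathscr{D}_{\mathrm{qc}}(X)$. Since $X$ is proper over the Artin ring $R$ with a Gorenstein structure morphism, Example~\ref{Gexam} says $\T$ is weakly approximable and Gorenstein with a bounded compact generator. Proposition~\ref{FGD}(1) again gives finite strong finitistic dimension. Theorem~\ref{main1}(2) then shows that $\mathscr{D}_{\mathrm{qc}}(X)^{sb}$ has a bounded $t$-structure if and only if $\Tsb=\T^b$. By \cite[Lemma 5.2(2)]{CCZ26} this is equivalent to finite global dimension of $\T$, and by Lemma~\ref{lasttech} (using that $X$ is finite-dimensional and noetherian) that is equivalent to $X$ being regular.

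The main obstacle is checking that the hypotheses of Example~\ref{Gexam} match the situation here. In (1), one must see that an ordinary Gorenstein Artin algebra, viewed as a dg algebra, fits the Gorenstein notion of Definition~\ref{Gorenstein}. In (2), one must confirm that ``proper Gorenstein scheme'' there means exactly a proper Gorenstein morphism to a Gorenstein Artin base. The remaining steps are direct citations.
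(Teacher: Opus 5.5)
Your proposal is correct and is essentially the paper's proof: Proposition~\ref{FGD}(1) with Example~\ref{Gexam} gives finite strong finitistic dimension, and Theorem~\ref{main1}(2) reduces everything to whether $\Tsb=\T^b$, which is then identified with finite global dimension of $\Lambda$, respectively with regularity of $X$. Your detour through \cite[Lemma 5.2(2)]{CCZ26} and finite global dimension of $\T$ is harmless but not needed, since the paper uses directly that $\Kb{\Pmodcat{\Lambda}}=\mathscr{D}^b(\Lambda\Modcat)$ exactly when $\Lambda$ has finite global dimension, and for (2) it is the last paragraph of the proof of Lemma~\ref{lasttech}, not its statement alone, that identifies $\Tsb=\T^b$ with regularity of $X$.
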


By using recollements of triangulated categories, we can construct more weakly approximable triangulated categories with finite strong finitistic dimension (Proposition \ref{construct1}), and these categories may not be Gorenstein by Remark \ref{Non-Gorenstein}. Thus Theorem \ref{main1} can also be applied to some non-Gorenstein algebras glued by Gorenstein algebras via stratifying ideals (Corollary \ref{app2}).

\emph{Structure of the paper:} In Section \ref{2}, we fix some notation and recall some basic definitions including $t$-structures, weakly approximable triangulated categories and approximating systems. In Section \ref{3}, we prove Theorem \ref{main1}. As a crucial technique, a method for lifting bounded above $t$-structures from $\U$ (any intermediate triangulated category between $\T^{sb}$ and $\T^b$) to $\T^b$ is given in Lemma \ref{lem-lifting-$t$-stucture}. In Section \ref{4}, we first introduce the concept of Gorenstein triangulated categories (Definition \ref{Gorenstein}) together with algebraic and geometric examples, and prove Proposition \ref{FGD} and Corollary \ref{mainapp}. Then we construct triangulated categories with finite strong finitistic dimension via recollements of triangulated categories (Proposition~\ref{construct1}). Finally, we apply Theorem \ref{main1} to recollements of derived module categories and obtain Corollaries \ref{construct3} and \ref{app2}. This links the existence of bounded $t$-structures to the regularity of algebras.

\section{Preliminaries}\label{2}

In this section, we briefly fix notation and recall some definitions and basic facts used in the paper.

\subsection{(Bounded) $t$-structures and weakly approximable triangulated categories}\label{BWAT}

Let $\T$ be a triangulated category with the shift functor $[1]$. For a full subcategory $\mathcal{X}$ of $\T$, we denote the full subcategories of $\mathcal{T}$ right and left orthogonal to $\mathcal{X}$ separately by
$$\mathcal{X}^{\perp}\coloneqq\{M\in \T\mid \Hom_{\T}(X,M)=0, \;\forall\,X\in \mathcal{X}\} \;\; \mbox{and}\;\;{}^{\perp}\mathcal{X}\coloneqq\{M\in \T\mid \Hom_{\T}(M, X)=0,\;\forall\,X\in \mathcal{X}\}.$$
Suppose that $\T$ admits (small) coproducts. An object $X \in \T$ is said to be \emph{compact} if the Hom-functor $\Hom_{\T}(X,-) \colon \T \to\mathbb{Z}\Modcat$ preserves coproducts. The full subcategory of $\T$ consisting of all compact objects is denoted by $\T^{c}$. We say that $\T$ is \emph{compactly generated} if there exists a set $\mathcal{G}\subseteq \T^{c}$ such that $\bigcap_{i\in\mathbb{Z}}(\mathcal{G}[i])^{\perp}=\{0\}$ (or equivalently, $\T$ is exactly the smallest full triangulated subcategory of $\T$ containing $\mathcal{G}$ and closed under coproducts). In this case, $\T^c$ is the smallest full triangulated subcategory of $\T$ containing $\mathcal{G}$ and closed under direct summands. If further $\mathcal{G}=\{G\}$, then $\T$ is called a \emph{monogenic} triangulated category.

We first fix some notation in the following definition.

\begin{defn}{\rm \cite[Reminder 1.12]{N18}}\label{notation} Let $\T$ be a triangulated category with coproducts and with $G \in \T$.

{\rm (1)} For integers $a \leq b$, let
$G[a,b] \coloneqq \{ G[-i] \mid i \in \mathbb{Z},\ a \leq i \leq b \}$.
For later use, we extend this notation to allow $a$ or $b$ to be infinite; for example,
$G(-\infty, b] \coloneqq \{ G[-i] \mid i \in \mathbb{Z},\ i \leq b \}$.

\vspace{0.3em}

{\rm (2)} Let $a\leq b$ be integers (possibly infinite). We denote by $\langle G \rangle^{[a,\,b]}$ the smallest full subcategory of $\T$ which contains $G[a,b]$ and is closed under direct summands, finite direct sums and extensions. Clearly, $\langle G\rangle^{(-\infty,+\infty)}=\bigcup_{i\geq 0}\langle G\rangle^{[-i,\,i]}$. For simplicity, we write \(\langle G \rangle\) for \(\langle G \rangle^{(-\infty,+\infty)}\).

{\rm (3)} Let $a\leq b$ be integers (possibly infinite). We denote by $\overline{\langle G \rangle}^{[a,\,b]}$ the smallest full subcategory of $\T$ which contains $G[a,b]$ and is closed under direct summands, coproducts and extensions.

{\rm (4)} The object $G$ is called a \emph{compact generator} of $\T$ if $G\in \T^{c}$ and $\T=\overline{\langle G \rangle}^{(-\infty, +\infty)}$. If, in addition, $\Hom_{\T}(G,G[i])=0$ for $i\ll 0$, then $G$ is called a \emph{bounded compact generator} of $\T$.
\end{defn}

The definition of $t$-structures on a triangulated category is standard.

\begin{defn}{\rm \cite[Definition 1.3.1]{BBD}}
\label{defn-$t$-structure}
Let $\mathcal{T}$ be a triangulated category. A pair of full subcategories $(\mathcal{T}^{\leq 0},\mathcal{T}^{\geq 1})$ in $\mathcal{T}$ is called a \emph{$t$-structure} on $\mathcal{T}$ if the following conditions are satisfied:

{\rm (T1)} $\mathcal{T}^{\leq 0}[1] \subseteq \mathcal{T}^{\leq 0}$ and $\mathcal{T}^{\geq 1} \subseteq\mathcal{T}^{\geq 1}[1]$;

{\rm (T2)} $\Hom_{\mathcal{T}}(\mathcal{T}^{\leq 0},\mathcal{T}^{\geq 1})=0$;

{\rm (T3)} For any object $X \in \mathcal{T}$, there is a triangle
$X^{\leq 0} \rightarrow X \rightarrow X^{\geq 1} \rightarrow X^{\leq 0}[1]$ in $\T$
with $X^{\leq 0} \in \mathcal{T}^{\leq 0}$ and $X^{\geq 1} \in \mathcal{T}^{\geq 1}$.
\end{defn}
Let $(\T^{\leq 0},\T^{\geq 1})$ be a $t$-structure on $\T$.
For each $n \in \mathbb{Z}$, we set $\T^{\leq n}\coloneqq \T^{\leq 0}[-n]$ and $\T^{\geq n+1}\coloneqq \T^{\geq 1}[-n]$. Then $(\T^{\leq n})^{\perp}=\T^{\geq n+1}$ and
$^{\perp}(\T^{\geq n})=\T^{\leq n-1}$.
Thus $\T^{\leq n}$ and $\T^{\geq n+1}$ determine each other and $(\T^{\leq n},\T^{\geq n+1})$ is also a $t$-structure on $\T$.

The $t$-structure $(\mathcal{T}^{\leq 0},\mathcal{T}^{\geq 1})$ is said to be \emph{bounded above} (resp.\ \emph{bounded below}) if $\T = \bigcup_{m=1}^{\infty} \T^{\leq m}$ (resp.\ $\T = \bigcup_{m=1}^{\infty} \T^{\geq -m}$). It is \emph{bounded} if it is both bounded above and bounded below; \emph{nondegenerate} if $\bigcap_{i\in\mathbb{Z}}\T^{\leq i}=\bigcap_{i\in\mathbb{Z}}\T^{\geq i}=\{0\}.$ Note that if $(\mathcal{T}^{\leq 0},\mathcal{T}^{\geq 1})$ is bounded, then it is nondegenerate.

Let $\mathcal{H}\coloneqq\T^{\leq 0}\cap\T^{\geq 0}$. Then $\mathcal{H}$ is an abelian category and is usually called the \emph{heart} of the $t$-structure $(\T^{\leq 0},\T^{\geq 1})$. Let $H:\T\to\mathcal{H}$ be the associated cohomological functor (see \cite[Theorem 1.3.6]{BBD}). Further, we define $H^i:=H\circ[i]$ for $i\neq 0$, and $H^0\coloneqq H$.

Two $t$-structures $(\T^{\leq 0}_i,\T^{\geq 1}_i)$ for $i=1,2$ on $\T$ are said to be \emph{equivalent} if there exists some natural number $n$ such that $\T^{\leq -n}_1\subseteq \T^{\leq 0}_2\subseteq \T^{\leq n}_1$, or equivalently,  $\T^{\geq n}_1\subseteq \T^{\geq 0}_2\subseteq \T^{\geq -n}_1$. This defines an equivalence relation on the class of all $t$-structures on $\T$.

The following lemma gives a canonical construction of $t$-structures 
generated by compact objects. The result was originally established in 
\cite[Theorem A.1]{TLS03}. Alternative proofs can be found in 
\cite[Theorem 12.1]{KN13} and \cite[Theorem 3.0.1]{CHNS24}.

\begin{lem}{\rm \cite[Theorem A.1]{TLS03}}\label{lem-p-w-gen}
Let $\T$ be a triangulated category with coproducts and let $G$ be a compact object in $\T$. Then there is a $t$-structure on $\T$ generated by $G$, that is,
$$
(\T_{G}^{\leq 0},\T_{G}^{\geq 1})\coloneqq\big(\,\overline{\langle G\rangle}^{(-\infty,0]},G(-\infty,0]^{\perp}\big).
$$
Moreover, the heart $\mathcal{H}$ of $(\T_{G}^{\leq 0},\T_{G}^{\geq 1})$ is closed under coproducts in $\T$ and the cohomological functor $H\colon\T\to\mathcal{H}$ respects coproducts in $\T$.
\end{lem}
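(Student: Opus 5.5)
This lemma is quoted from \cite[Theorem A.1]{TLS03}, so rather than reprove it in full I would recall the standard argument, which has three parts.

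\emph{Construction of truncations.} Put $\mathcal{U}\coloneqq\overline{\langle G\rangle}^{(-\infty,0]}$. Since $G$ is compact, $\mathcal{U}$ is a cocomplete pre-aisle: it is closed under $[1]$, extensions, coproducts and summands. For the truncation triangle (T3) I would use a small-object style construction. Given $X\in\T$, set $X_0\coloneqq X$. Inductively, take the coproduct $C_n$ of copies of $G[i]$, $i\geq 0$, indexed by all maps $G[i]\to X_n$. Let $X_{n+1}$ be the cone of the tautological map $C_n\to X_n$. Define $X^{\geq 1}\coloneqq\Hocolim X_n$.

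Compactness of $G$ then gives
$$\Hom(G[i],X^{\geq 1})=\colim\Hom(G[i],X_n)=0,$$
because each map $G[i]\to X_n$ dies in $X_{n+1}$. By the octahedral axiom, the fibre of $X\to X_n$ lies in $\mathcal{U}$, being an iterated extension of the $C_k$. The fibre of $X\to X^{\geq 1}$ is then a homotopy colimit of objects of $\mathcal{U}$, which lies in $\mathcal{U}$ because $\mathcal{U}$ is closed under coproducts and cones of the form in the Milnor triangle. This yields (T3).

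\emph{Orthogonality and (T1).} Condition (T2) follows because $G(-\infty,0]^{\perp}$ is right orthogonal to everything built from $G(-\infty,0]$ by extensions, coproducts and summands. Condition (T1) is immediate from the definitions.

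\emph{The heart.} For the statement about coproducts, first observe that $G(-\infty,0]^{\perp}$ is closed under coproducts, since $\Hom(G[i],-)$ commutes with coproducts. Hence both aisle and coaisle are closed under coproducts. It follows that truncation functors commute with coproducts, because the coproduct of truncation triangles is again a truncation triangle. In particular $\mathcal{H}$ is closed under coproducts in $\T$, and $H=\tau^{\geq0}\tau^{\leq0}$ preserves them. The coproduct in $\mathcal{H}$ agrees with the one in $\T$ because $\mathcal{H}$ is a full subcategory closed under coproducts.

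\emph{Main obstacle.} The delicate step is showing that the fibre of $X\to\Hocolim X_n$ lies in $\mathcal{U}$. This needs the octahedral axiom together with the Milnor triangle $\coprod F_n\to\coprod F_n\to \Hocolim F_n$. I would rely on \cite[Theorem A.1]{TLS03} and \cite[Theorem 12.1]{KN13} for those details.
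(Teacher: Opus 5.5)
Your proposal is correct and is essentially the standard small-object argument of \cite{TLS03} and \cite{KN13}. The paper itself gives no proof beyond citing these references, and your deduction of the statement about the heart, from the closure of both halves under coproducts, is the expected one. For the step you flag, you do not need to identify the fibre of $X\to\Hocolim X_n$ with a homotopy colimit of the $F_n$: apply the $3\times 3$ lemma to the commutative square formed by $\coprod_n X\to\coprod_n X_n$ and the maps $(1-f_*)$ of Definition~\ref{HCT}, using that the Milnor triangle of the constant sequence on $X$ has third term $X$, exactly as the paper does in the lifting lemma of Section~\ref{3}. This yields a triangle $Z\to X\to\Hocolim X_n\to Z[1]$ with $Z\in(\coprod_n F_n)\ast(\coprod_n F_n[1])\subseteq\mathcal{U}$, which is all that (T3) requires.
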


Following \cite[Definition~1.18]{N18}, for a compactly generated triangulated category $\T$ with a single compact generator $G$, the equivalence class of the $t$-structure~$(\T_{G}^{\leq 0}, \T_{G}^{\geq 1})$ in Lemma~\ref{lem-p-w-gen} is called the \emph{preferred equivalence class}.

The following lemma will be used later.

\begin{lem}{\rm \cite[Proposition 1.3.7]{BBD}}\label{tec1}
Let $(\mathcal{T}^{\leq 0},\mathcal{T}^{\geq 1})$ be a nondegenerate $t$-structure on a triangulated category $\mathcal{T}$ and let $n\in \mathbb{Z}$.
Then $X \in \mathcal{T}^{\leq n}$ (resp.\ $X \in \mathcal{T}^{\geq n}$) if and only if $H^{i}(X)=0$ for all $i\geq n+1$ (resp.\ $i\leq n-1$). In particular, a morphism $f \colon X \to Y$ in $\mathcal{T}$ is an isomorphism if and only if $H^{i}(f): H^i(X)\to H^i(Y)$ are isomorphisms for all $i\in\mathbb{Z}$.
\end{lem}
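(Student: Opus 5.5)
The plan is to first establish the forward direction, from which the converse follows by applying it to the shifted $t$-structures.

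Suppose $X\in\mathcal{T}^{\leq n}$, and fix $i\geq n+1$. By (T1), $\mathcal{T}^{\leq n}\subseteq\mathcal{T}^{\leq i-1}$, so $X[i]\in\mathcal{T}^{\leq -1}$. Since the truncation functors are exact on the relevant aisles, the image of $X[i]$ under $\tau^{\geq 0}$ vanishes; because $H=\tau^{\geq 0}\tau^{\leq 0}$, this gives $H^i(X)=H(X[i])=0$. The dual argument shows that $X\in\mathcal{T}^{\geq n}$ forces $H^i(X)=0$ for $i\leq n-1$.

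The real content is the converse. Suppose $H^i(X)=0$ for all $i\geq n+1$, and consider the truncation triangle
$$X^{\leq n}\to X\to X^{\geq n+1}\to X^{\leq n}[1].$$
Set $Y:=X^{\geq n+1}$. Applying the long exact cohomology sequence and using the forward direction on $X^{\leq n}$ and $X^{\leq n}[1]$, we get $H^i(Y)\cong H^i(X)=0$ for $i\geq n+2$. For $i=n+1$, the group $H^{n+1}(Y)$ is a quotient of $H^{n+1}(X)=0$, because $H^{n+2}(X^{\leq n})=0$. So all cohomology of $Y$ vanishes, while $Y\in\mathcal{T}^{\geq n+1}$.

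The main step is then to show that an object $Y\in\mathcal{T}^{\geq n+1}$ with all $H^i(Y)=0$ lies in $\mathcal{T}^{\geq m}$ for every $m$, by upward induction on $m$. If $Y\in\mathcal{T}^{\geq m}$, then $\tau^{\leq m}Y=H^m(Y)[-m]=0$, so $Y\cong\tau^{\geq m+1}Y\in\mathcal{T}^{\geq m+1}$. Hence $Y\in\bigcap_m\mathcal{T}^{\geq m}=0$ by nondegeneracy, which gives $X\cong X^{\leq n}\in\mathcal{T}^{\leq n}$. The bounded-below statement is dual, using $\bigcap\mathcal{T}^{\leq i}=0$.

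For the isomorphism criterion, let $f\colon X\to Y$ induce isomorphisms on all $H^i$, and complete $f$ to a triangle $X\to Y\to Z\to X[1]$. The long exact sequence in $H$ and the five lemma give $H^i(Z)=0$ for all $i$. By the first part, $Z\in\mathcal{T}^{\leq n}$ for every $n$, so $Z\in\bigcap_n\mathcal{T}^{\leq n}=0$ by nondegeneracy. Therefore $f$ is an isomorphism. The converse implication is trivial.

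I expect the only delicate point to be the inductive vanishing argument, namely checking that $H^m(Y)[-m]=\tau^{\leq m}Y$ when $Y\in\mathcal{T}^{\geq m}$. This is standard from the construction of the heart in \cite[Theorem 1.3.6]{BBD}.
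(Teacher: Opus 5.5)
Your proof is correct. It is essentially the standard argument for \cite[Proposition 1.3.7]{BBD}, which the paper cites without giving its own proof. Necessity comes from the truncation functors. For sufficiency, you show that $X^{\geq n+1}$ has vanishing cohomology and is therefore zero, using the inductive step $\tau^{\leq m}Y\cong H^m(Y)[-m]$ for $Y\in\mathcal{T}^{\geq m}$ together with nondegeneracy. The isomorphism criterion then follows by applying this to the cone of $f$.

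The one point worth stating explicitly is that $\tau^{\leq m}$ preserves $\mathcal{T}^{\geq m}$. This follows from the rotated truncation triangle $(\tau^{\geq m+1}Y)[-1]\to\tau^{\leq m}Y\to Y\to\tau^{\geq m+1}Y$ and the fact that $\mathcal{T}^{\geq m}$ is closed under extensions. It is exactly what gives $\tau^{\leq m}Y\cong\tau^{\geq m}\tau^{\leq m}Y=H^m(Y)[-m]$ in your delicate step.
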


Now, we recall the definition of homotopy colimits in triangulated categories.

\begin{defn}{\rm \cite[Definition 1.6.4]{N01}}\label{HCT}
Let $\T$ be a triangulated category such that countable coproducts exist in $\T$.  Let
$$X_1\lraf{f_1}X_2\lraf{f_2}X_3\lraf{f_3}\cdots\lraf{f_n}X_{n+1}\lra\cdots$$
be a sequence of objects and morphisms in $\T$. The \emph{homotopy colimit} of this sequence, denoted by $\underrightarrow{\hocolim}(X_*)$,
is given, up to non-canonical isomorphism, by the triangle
$$
\bigoplus_{n\geq 1}X_n\lraf{(1-f_*)} \bigoplus_{n\geq 1}X_n\lra \underrightarrow{\hocolim}(X_*)\lra \bigoplus_{n\geq 1}X_n[1]
$$
where the morphism $(1-f_*)$  is induced by  $\left(\begin{smallmatrix} \mathrm{Id}_{X_i} \\ -f_i \end{smallmatrix}\right): X_i\to X_i\oplus X_{i+1}\subseteq \bigoplus_{n\geq 1}X_n$ \,for all $i\geq 1$.
\end{defn}

The following lemma is well known to experts. For the reader's convenience we include a proof.

\begin{lem}\label{tec2}
Let $\T$ be a triangulated category with coproducts and let $(\T^{\leq 0}, \T^{\geq 1})$ be a $t$-structure generated by a compact object in $\T$. Consider the following sequence in $\T$:
\[
(X_{*},f_{*}):\; X_{1}\xrightarrow{f_{1}}X_{2}\xrightarrow{f_{2}}X_{3}\xrightarrow{f_{3}}\cdots\rightarrow  X_n\xrightarrow{f_n}X_{n+1}\rightarrow\cdots.
\] Suppose that, for each integer $i$, there exists a positive integer $n_i$ such that $H^i(f_j):H^i(X_j)\to H^i(X_{j+1})$ is an isomorphism in $\T$ for $j\geq n_i$. Then for each $i\in\mathbb{Z}$, there is a canonical isomorphism in the heart of the $t$-structure:
$$\colims\,(H^i(X_{*}))\simeq H^i(\Hocolim(X_{*})).$$
\end{lem}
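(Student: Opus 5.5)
The plan is to compute $H^i$ of the homotopy colimit from the defining triangle of Definition~\ref{HCT}, and then use the eventual stabilization to identify the resulting cokernel with the colimit. Write $\Phi:=(1-f_*)\colon \bigoplus_n X_n\to\bigoplus_n X_n$ and let $X:=\Hocolim(X_*)$. By Lemma~\ref{lem-p-w-gen}, the heart $\mathcal{H}$ is closed under coproducts and $H$ respects coproducts. Since the shift commutes with coproducts, each $H^i$ also respects coproducts. Applying the cohomological functor $H^i$ to the triangle defining $X$ gives an exact sequence in $\mathcal{H}$:
$$\bigoplus_n H^i(X_n)\xrightarrow{H^i(\Phi)}\bigoplus_n H^i(X_n)\lra H^i(X)\lra \bigoplus_n H^{i+1}(X_n)\xrightarrow{H^{i+1}(\Phi)}\bigoplus_n H^{i+1}(X_n).$$
Under the coproduct identifications, $H^i(\Phi)$ is the map $1-H^i(f_*)$ built from the sequence $(H^i(X_*),H^i(f_*))$ in $\mathcal{H}$.

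Next I use two standard facts about sequences in an abelian category with countable coproducts. First, the cokernel of $1-g_*$ on $\bigoplus_n Y_n$ is $\colims Y_*$, since this is the usual presentation of a sequential colimit as a coequalizer, and it exists because $\mathcal{H}$ has coproducts. Second, $1-g_*$ is always a monomorphism; this would follow formally if coproducts in $\mathcal{H}$ are exact.

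Here I avoid relying on AB4 and instead use the stabilization hypothesis directly. For the degree $i+1$ the maps $H^{i+1}(f_j)$ are isomorphisms for $j\ge n_{i+1}$. Then $\bigoplus_n H^{i+1}(X_n)$ splits as the finite part $\bigoplus_{n<N} H^{i+1}(X_n)$ plus the tail $\bigoplus_{n\ge N}$, where $N=n_{i+1}$. On the tail the sequence is constant up to isomorphism, so injectivity of $1-g_*$ there reduces to injectivity of the shift map $1-\sigma$ on $\bigoplus_{\mathbb N} Y$. That map has an explicit left inverse of the form ``partial sums'', but this has to be checked carefully, because partial sums give infinite sums, and in a coproduct rather than a product that is not immediate.

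A cleaner route, and the one I would take first, is to reduce to a constant sequence. Replacing $X_*$ by the cofinal subsequence starting at $N$ does not change $\Hocolim$ (a standard fact, \cite[Lemma 1.7.1]{N01}-type) and does not change $\colims$. On a constant sequence of isomorphisms, $\colims Y_* \cong Y_N$. For the injectivity I would exhibit directly a retraction $\bigoplus_n Y\to\bigoplus_n Y$ of $1-\sigma$ that is defined componentwise by finite formulas; alternatively, I can compose with the isomorphism $\bigoplus_n Y\cong\bigoplus_n Y$ that conjugates $1-\sigma$ to $1-\sigma$ with $\sigma$ the identity shift, and then check monicity via $\Hom(G[k],-)$. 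The latter works because $\mathcal{H}$ embeds well through the functors $\Hom(G[k],-)$, which commute with coproducts since $G$ is compact. In fact, that last observation is the main tool in all of this: testing with the compact $G$ turns everything into the corresponding statement for abelian groups, where sequential colimits and coproducts are exact.

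So the main obstacle is the injectivity of $H^{i+1}(\Phi)$. Once it is established, exactness gives $H^i(X)\cong\Coker(1-H^i(f_*))=\colims H^i(X_*)$. Canonicity holds because the map is induced by the structure maps $X_n\to X$.
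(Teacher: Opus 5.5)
Your proof is correct and follows the paper's outline. You apply $H^i$ to the defining triangle, use that $H$ commutes with coproducts to identify $\Coker(1-H^i(f_*))$ with $\colims H^i(X_*)$, and reduce everything to monicity of $H^{i+1}(\sigma)$. Where you differ is in how you get that monicity.

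The paper works in $\T$. It views $H^{i+1}(\sigma)=1-H^{i+1}(f_*)$ as a morphism whose cone is $\Hocolim(H^{i+1}(X_*))$. By stabilization this cone is isomorphic to $H^{i+1}(X_{n_{i+1}})\in\mathcal H$. The kernel of a heart morphism is the $\le 0$ truncation of its shifted cone, so the kernel vanishes.

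You stay inside $\mathcal H$ and show directly that $1-H^{i+1}(f_*)$ is a split monomorphism. This is exactly the computation behind the fact the paper quotes, namely that the homotopy colimit of an eventually-isomorphic sequence is its stable term. So your route is more self-contained, and the paper's is shorter.

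Two details need tidying.
\begin{itemize}
\item The retraction raises no issue with infinite sums. On the constant tail starting at $N$, set $r\iota_n=-\sum_{N\le k<n}\iota_k$. This is a finite sum on each summand, so it defines a morphism out of the coproduct, and $r(\iota_n-\iota_{n+1})=\iota_n$.
\item The decomposition $\bigoplus_{n<N}\oplus\bigoplus_{n\ge N}$ does not split $1-g_*$, since the summand $Y_{N-1}$ maps into $Y_{N-1}\oplus Y_N$. However, the map is block lower triangular, and its finite block is unipotent, hence invertible. So a retraction of the tail block extends to a retraction of the whole map. Doing this on the original sequence also makes canonicity automatic. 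The subsequence replacement would instead need the isomorphism $\Hocolim(X_{\ge N})\simeq\Hocolim(X_*)$ to be compatible with the structure maps.
\end{itemize}

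Your aside that $1-g_*$ is always monic once coproducts in $\mathcal H$ are exact is wrong. AB4 is not enough: $\mathrm{Ab}^{\rm op}$ is AB4, and there monicity of $1-g_*$ amounts to vanishing of $\lim^1$, which can fail. One needs exactness of sequential direct limits. Since you never use the aside, just drop it.

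Your $\Hom(G,-)$ idea, once made precise, proves more than either argument.
\begin{itemize}
\item For $u\colon A\to B$ in $\mathcal H$ with cone $C_u$, one has $\Ker u\simeq H^{-1}(C_u)$. So $u$ is monic if and only if $C_u\in\T^{\ge 0}=\{G[j]\mid j\ge 1\}^{\perp}$.
\item Since $A,B\in\T^{\ge 0}$, the long exact sequence gives $\Hom_{\T}(G[j],C_u)=0$ for $j\ge 2$ and $\Hom_{\T}(G[1],C_u)\simeq\Ker\Hom_{\T}(G,u)$.
\item Hence $u$ is monic if and only if $\Hom_{\T}(G,u)$ is injective.
\item Because $G$ is compact, $\Hom_{\T}(G,1-H^{i+1}(f_*))$ is the map $1-\Hom_{\T}(G,H^{i+1}(f_*))$ on $\bigoplus_n\Hom_{\T}(G,H^{i+1}(X_n))$. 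This map is injective for any sequence of abelian groups.
\end{itemize}
So the lemma actually holds without the stabilization hypothesis. Note that the work is done by the single functor $\Hom_{\T}(G,-)$, not by some embedding through all the $\Hom(G[k],-)$.
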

\begin{proof}
By Definition \ref{HCT}, there is a triangle in $\T$:
\[
\begin{tikzcd}
\bigoplus_{n\geq 1}X_n \arrow[r, "\sigma"] & \bigoplus_{n\geq 1}X_n \arrow[r] & \Hocolim(X_{*}) \arrow[r] & {(\bigoplus_{n\geq 1}X_n)[1]}
\end{tikzcd}
\]
with $\sigma:=(1-f_*)$. Applying $H^i(-)$ to the triangle yields a long exact sequence in the heart $\mathcal{H}$ of $(\T^{\leq 0}, \T^{\geq 1})$:
{\small\[
\begin{tikzcd}
H^{i}(\bigoplus_{n\geq 1}X_n) \arrow[r, "H^{i}(\sigma)"] & H^{i}(\bigoplus_{n\geq 1}X_n) \arrow[r] & H^{i}(\Hocolim(X_{*})) \arrow[r] & H^{i+1}(\bigoplus_{n\geq 1}X_n) \arrow[r, "H^{i+1}(\sigma)"] & H^{i+1}(\bigoplus_{n\geq 1}X_n).
\end{tikzcd}
\]}
Since $H(-)$ respects coproducts in $\T$ by Lemma \ref{lem-p-w-gen}, there is a canonical isomorphism $$(\ast)\quad \bigoplus_{n\geq 1}H^{i}(X_n)\lra H^{i}(\bigoplus_{n\geq 1}X_n)$$ for each $i\in\mathbb{Z}$.
This implies $\Coker(H^{i}(\sigma))\simeq\colims(H^{i}(X_{*}))$. To show $\colims\,(H^i(X_{*}))\simeq H^i(\Hocolim(X_{*}))$, we only need to prove that $H^{i+1}(\sigma)$ is a monomorphism in $\mathcal{H}$.

In fact, by $(\ast)$, there exists a triangle in $\T$:
\[
\begin{tikzcd}
{\Hocolim (H^{i+1}(X_{*}))[-1]} \arrow[r] & H^{i+1}(\bigoplus_{n\geq 1}X_n) \arrow[r, "H^{i+1}(\sigma)"] & H^{i+1}(\bigoplus_{n\geq 1}X_n) \arrow[r] & \Hocolim (H^{i+1}(X_{*})).
\end{tikzcd}
\]
Since $H^{i+1}(f_j)$ is an isomorphism in $\T$ for all $j\geq n_{i+1}$, we have
$$\Hocolim(H^{i+1}(X_{*}))\simeq H^{i+1}(X_{n_{i+1}})\in\mathcal{H}\subseteq \T^{\geq 0}.$$
Note that $\Ker(H^{i+1}(\sigma))\simeq(\Hocolim(H^{i+1}(X_{*}))[-1])^{\leq 0}$ by the abelian structure of $\mathcal{H}$. It follows that $\Ker(H^{i+1}(\sigma))=0$, and thus $H^{i+1}(\sigma)$ is a monomorphism.
\end{proof}

We are interested in a special class of monogenic triangulated categories introduced by Neeman.

\begin{defn}{\rm \cite[Definition 1.25]{N18}}\label{defn-app}
A triangulated category $\T$ with coproducts is said to be \emph{weakly approximable} if
it has a compact generator $G$ and a $t$-structure $(\mathcal{T}^{\le 0},\mathcal{T}^{\ge 1})$
together with an integer $A>0$ such that

{\rm (1)}
$G[A]\in \mathcal{T}^{\leq0}$ and $\Hom_{\mathcal{T}}(G[-A],\mathcal{T}^{\leq 0})=0$.

{\rm (2)}
Each object $F\in\mathcal{T}^{\leq 0}$
admits a triangle $E\ra F\ra D \ra E[1]$ with $E\in  \overline{\langle G\rangle}^{[-A,\,A]}$ and
$D\in\mathcal{T}^{\le -1}$.
\end{defn}

By \cite[Facts~1.26]{N18}, the notion of weakly approximable triangulated categories does not depend on the choice of compact generators and $t$-structures in the preferred equivalence class. In particular, the $t$-structure $(\T^{\le 0},\T^{\ge 1})$ in Definition \ref{defn-app} lies in the preferred equivalence class. So, when addressing a weakly approximable triangulated category $\T$ with a compact generator $G$, we can always take the $t$-structure $(\T^{\leq 0},\T^{\geq 1})$ in Definition \ref{defn-app} to be the canonical $t$-structure $(\T_G^{\leq 0},\T_G^{\geq 1})$ associated with $G$ (see Lemma~\ref{lem-p-w-gen}). Moreover, $\Hom_{\T}(G,G[i])=0$ for $i\gg0$ by Definition~\ref{defn-app}(1), and every $t$-structure in the preferred equivalence class is nondegenerate by \cite[Lemma~3.6]{BNP23}.

We end this subsection with the following key result.

\begin{lem}{\rm\cite[Corollary 3.2]{N18}}\label{lem-app-induction}
Let $\mathcal{T}$ be a compactly generated triangulated category with a compact generator $G$, and let $(\T^{\le 0},\T^{\ge 1})$ be a $t$-structure in the preferred equivalence class.

If there exists an integer $A>0$ satisfying {\rm Definition~\ref{defn-app}(2)}, then for every integer $m>0$ and every object $F\in \mathcal{T}^{\leq 0}$, there exists a triangle $C_m \to F \to D_m \to C_m[1]$ in $\T$ such that $C_m\in \overline{\langle G\rangle}^{[1-m-A,\,A]}$ and $D_m\in \mathcal{T}^{\le -m}$.
\end{lem}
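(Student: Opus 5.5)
The plan is an induction on $m$, building the triangle for $m+1$ from the triangle for $m$ with the octahedral axiom. For $m=1$ there is nothing to do: Definition~\ref{defn-app}(2) gives a triangle $E\to F\to D\to E[1]$ with $E\in\overline{\langle G\rangle}^{[-A,\,A]}=\overline{\langle G\rangle}^{[1-1-A,\,A]}$ and $D\in\T^{\le -1}$. Only this approximation property is used, not condition (1), so the argument works for any $t$-structure in the preferred equivalence class for which (2) holds.

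For the inductive step, suppose we have $C_m\to F\lraf{g} D_m\to C_m[1]$ with $C_m\in\overline{\langle G\rangle}^{[1-m-A,\,A]}$ and $D_m\in\T^{\le -m}$. Since $\T^{\le -m}=\T^{\le 0}[m]$, we have $D_m[-m]\in\T^{\le 0}$. Applying Definition~\ref{defn-app}(2) to it and shifting back by $[m]$ gives a triangle
$$E'\lra D_m\lraf{h} D_{m+1}\lra E'[1]$$
with $D_{m+1}\in\T^{\le -1-m}$ and $E'\in\overline{\langle G\rangle}^{[-A,\,A]}[m]$.

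Next we identify the shifted subcategory. Since $[m]$ is an automorphism of $\T$ preserving coproducts, summands and triangles, and $G[-i][m]=G[-(i-m)]$, we get $\overline{\langle G\rangle}^{[a,\,b]}[m]=\overline{\langle G\rangle}^{[a-m,\,b-m]}$. Hence $E'\in\overline{\langle G\rangle}^{[-A-m,\,A-m]}$.

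Now apply the octahedral axiom to the composite $hg\colon F\to D_{m+1}$. Let $C_{m+1}$ be defined by a triangle $C_{m+1}\to F\lraf{hg} D_{m+1}\to C_{m+1}[1]$. The octahedron then yields a triangle
$$C_m\lra C_{m+1}\lra E'\lra C_m[1],$$
so $C_{m+1}\in \overline{\langle G\rangle}^{[1-m-A,\,A]}\ast\overline{\langle G\rangle}^{[-m-A,\,A-m]}$. Both categories lie inside $\overline{\langle G\rangle}^{[-m-A,\,A]}$, which is closed under extensions by definition. Therefore
$$C_{m+1}\in\overline{\langle G\rangle}^{[-m-A,\,A]}=\overline{\langle G\rangle}^{[1-(m+1)-A,\,A]},$$
as required, and $D_{m+1}\in\T^{\le-(m+1)}$.

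The only point needing care is getting the octahedron in the right orientation, so that $C_{m+1}$ is an extension of $E'$ by $C_m$ rather than the other way round. Either orientation would suffice anyway, since both pieces lie in the single extension-closed category $\overline{\langle G\rangle}^{[-m-A,\,A]}$. Everything else is bookkeeping of shift indices.
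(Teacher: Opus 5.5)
Your proof is correct. The base case is Definition~\ref{defn-app}(2) itself. The inductive step applies (2) to $D_m[-m]$, uses the identity $\overline{\langle G\rangle}^{[a,\,b]}[m]=\overline{\langle G\rangle}^{[a-m,\,b-m]}$, and applies the octahedral axiom to the composite $F\to D_m\to D_{m+1}$, which does give $C_m\to C_{m+1}\to E'\to C_m[1]$. The paper gives no proof here and only cites \cite[Corollary 3.2]{N18}; your octahedral induction is the standard argument for this kind of iteration, and you are right that only condition (2) is needed.
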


\subsection{Approximating systems}

As a preparation for showing Theorem \ref{main1}, we introduce the following definition that is a modification of \cite[Definition 9.3]{N18}.

\begin{defn}\label{strong approximating system}
Let $\T$ be a triangulated category equipped with a $t$-structure $(\T^{\le 0},\T^{\ge 1})$,
$\mathcal{A}$ a full subcategory of $\mathcal{T}$ and
$(E_{*},f_{*}) : E_{1}\xrightarrow{f_{1}}E_{2}\xrightarrow{f_{2}}E_{3}\xrightarrow{f_{3}}\cdots\to E_m\xrightarrow{f_m}E_{m+1}\rightarrow\cdots$
a sequence of objects and morphisms in $\mathcal{A}$.

$(1)$ The sequence $ (E_{*},f_{*}) $ is called a \emph{strong $\mathcal{A}$-approximating system} if for each $ m \in \mathbb{N}^+ $, the morphism $ H^i(f_{m}) $ is an isomorphism for all integers $ i \geq -m $.

$(2)$ Let $ F \in \mathcal{T} $. If there exist morphisms $ g_j: E_{j} \to F $ for all $ j \in \mathbb{N}^+ $ such that $ g_j = g_{j+1}f_{j} $ for all $ j \in \mathbb{N}^+ $, and for each $ m \in \mathbb{N}^+ $, the morphism $ H^i(g_{m}) $ is an isomorphism for all integers $ i \geq -m $, then $ (E_{*},f_{*}) $ is called a \emph{strong $\mathcal{A}$-approximating system} for $ F $.
\end{defn}

In the rest of this subsection, let $\T$ be a weakly approximable triangulated category with a compact generator $G$ and with a $t$-structure $(\T^{\le 0},\T^{\ge 1})$ in the preferred equivalence class. We now take a sequence $(E_*,f_*): E_{1}\xrightarrow{f_{1}}E_{2}\xrightarrow{f_{2}}E_{3}\xrightarrow{f_{3}}\cdots$ in the full subcategory $\Tsb$ of $\T$ consisting of strongly bounded objects.

The following lemma is inspired by \cite[Lemma~9.5]{N18}.

\begin{lem}\label{lem-strong approximating system}
{\rm (1)} If $(E_{*},f_{*})$ is a strong $\Tsb$-approximating system, then it is a strong $\Tsb$-approximating system for $\Hocolim(E_{*})$. Moreover, $\Hocolim(E_{*})$ belongs to $\T^{-}$.

{\rm (2)} Let $F \in \T^{-}$ and let $(E_{*}, f_{*})$ be a strong $\Tsb$-approximating system for $F$. Then the (non-canonical) morphism $\Hocolim(E_{*})\to F$ is an isomorphism.

{\rm (3)} Any object $F\in\T^{-}$ has a strong $\Tsb$-approximating system.
\end{lem}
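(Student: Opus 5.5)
For (1), I will apply Lemma~\ref{tec2} to $(E_*,f_*)$. The hypothesis of that lemma holds: for fixed $i$, $H^i(f_j)$ is an isomorphism whenever $j\geq -i$. So for each $i$ we get $\colims H^i(E_*)\simeq H^i(\Hocolim(E_*))$, and the colimit stabilizes at $E_m$ once $m\geq -i$. Let $g_m\colon E_m\to\Hocolim(E_*)$ be the canonical maps from the homotopy colimit triangle; they satisfy $g_m=g_{m+1}f_m$. Under the identification above, $H^i(g_m)$ is the structure map $H^i(E_m)\to\colims H^i(E_*)$. This is an isomorphism for $i\geq -m$, because all later transition maps $H^i(f_j)$ with $j\geq m\geq -i$ are isomorphisms. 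Hence $(E_*,f_*)$ is a strong approximating system for $\Hocolim(E_*)$.

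For boundedness above, $E_1\in\Tsb\subseteq\T^b$, so $E_1\in\T^{\leq N}$ for some $N$. For $i\geq -1$, we have $H^i(\Hocolim E_*)\simeq H^i(E_1)$, which vanishes for $i>N$. The $t$-structure is in the preferred class and hence nondegenerate, so Lemma~\ref{tec1} gives $\Hocolim(E_*)\in\T^{\leq \max(N,-1)}\subseteq\T^-$.

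For (2), the compatible maps $g_j\colon E_j\to F$ induce a morphism $\bigoplus E_n\to F$ vanishing on the image of $1-f_*$, so it factors through a (non-canonical) $\phi\colon\Hocolim(E_*)\to F$ with $\phi\circ(\text{canonical }E_m\to\Hocolim)=g_m$. Fix $i$ and take $m\geq -i$. In the commutative triangle $H^i(E_m)\to H^i(\Hocolim E_*)\to H^i(F)$, the first map is an isomorphism by (1) and the composite $H^i(g_m)$ is an isomorphism by hypothesis, so $H^i(\phi)$ is an isomorphism. This holds for all $i$, so Lemma~\ref{tec1} shows $\phi$ is an isomorphism; nondegeneracy is what makes this apply. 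The hypothesis $F\in\T^-$ is not really needed for this argument beyond consistency.

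For (3), which is the main step, I will construct the $E_m$ using Lemma~\ref{lem-app-induction}. Shifting, assume $F\in\T^{\leq 0}$. For each $m$, choose a triangle $C_m\to F\to D_m$ with $C_m\in\overline{\langle G\rangle}^{[1-m-A,A]}\subseteq\Tsb$ and $D_m\in\T^{\leq -m-k}$. Here $k$ is chosen, using that $G$ is bounded, so that the long exact sequence gives that $H^i(C_m)\to H^i(F)$ is an isomorphism for $i\geq -m$; taking $D_m\in\T^{\leq -m-2}$ suffices. The obstacle is producing the transition maps $f_m\colon C_m\to C_{m+1}$ compatible with the maps to $F$. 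The composite $C_m\to F\to D_{m+1}$ vanishes provided $\Hom(C_m,D_{m+1})=0$, which would let $C_m\to F$ factor through $C_{m+1}$. I would therefore choose the depths adaptively: $C_m$ lies in $\T^{\geq -m-A'}$ for a constant $A'$ depending on $A$ and the boundedness of $G$, because $\Hom(G[j],\T^{\leq -r})=0$ for $r\gg j$ and such objects are built from $G[j]$ with $j$ in a bounded window. I then pick the index for $D_{m+1}$ deep enough, passing to a subsequence of the $C$'s (say $E_m:=C_{n_m}$ with $n_m$ increasing fast) so that $\Hom(E_m,D_{n_{m+1}})=0$. The factorization $f_m$ then exists, $g_m=g_{m+1}f_m$ holds by construction, and $H^i(f_m)$ is an isomorphism for $i\geq -m$ because both $H^i(g_m)$ and $H^i(g_{m+1})$ are.
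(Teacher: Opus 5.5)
\textbf{Parts (1) and (2).} These are correct and follow the paper's argument.

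\textbf{Part (3): the overall plan.} Your plan is also the paper's: take approximating triangles $C\to F\to D$ from Lemma~\ref{lem-app-induction}, choose each new $D$ deep enough that $\Hom_{\T}(C_{\rm prev},D)=0$, and factor. Your subsequence $C_{n_m}$ just repackages the paper's inductive choice of depths $\T^{\leq -3}$, $\T^{\leq -2-2A}$, and so on.

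\textbf{Part (3): the gap.} The property you invoke at the crucial step is the wrong one. Knowing $C_m\in\T^{\geq -m-A'}$ gives no vanishing of $\Hom_{\T}(C_m,\T^{\leq -r})$ for any $r$.
\begin{itemize}
\item A $t$-structure only kills maps from $\T^{\leq s-1}$ to $\T^{\geq s}$.
\item A uniform vanishing $\Hom_{\T}(\T^{\geq 0}\cap\T^b,\T^{\leq -(d+1)}\cap\T^b)=0$ is precisely finite global dimension of $\T$, which is not assumed.
\item Concretely, in $\mathscr{D}(\Lambda\Modcat)$ with $\Lambda$ of infinite global dimension, a module $M$ of infinite projective dimension lies in $\T^{\geq 0}$, yet $\Hom(M,\T^{\leq -n})\neq 0$ for every $n$.
\end{itemize}
So bounding $C_m$ below in the $t$-structure (which is where you use boundedness of $G$) cannot produce the maps $f_m$.

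\textbf{The fix.} What is needed is left orthogonality, $C\in{}^{\perp}(\T^{\leq -r})$ for some $r$. Your ``because'' clause actually proves this once stated correctly:
\begin{itemize}
\item By Definition~\ref{defn-app}(1), $\Hom_{\T}(G[-A],\T^{\leq 0})=0$. Shifting gives $\Hom_{\T}(G[-i],\T^{\leq i-A})=0$ for all $i$, so $G[a,b]\subseteq{}^{\perp}(\T^{\leq a-A})$.
\item Since ${}^{\perp}(\T^{\leq a-A})$ is closed under coproducts, extensions and direct summands, $\overline{\langle G\rangle}^{[a,\,b]}\subseteq{}^{\perp}(\T^{\leq a-A})$.
\item Hence any approximation $C\in\overline{\langle G\rangle}^{[a,\,b]}$ lies in ${}^{\perp}(\T^{\leq a-A})$. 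Choose the next triangle with $D\in\T^{\leq a-A}$, and also deep enough for the cohomology condition. Then $\Hom_{\T}(C,D)=0$, so $C\to F$ factors through the next approximation.
\end{itemize}
This is exactly the paper's observation $C_1\in\overline{\langle G\rangle}^{[-2-A,\,A]}\subseteq{}^{\perp}(\T^{\leq -2-2A})$. Boundedness of $G$ plays no role anywhere in (3), including the isomorphisms $H^i(C)\to H^i(F)$ for $i\geq -m$.

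\textbf{Minor indexing point.} Lemma~\ref{lem-app-induction} with index $n$ gives $D\in\T^{\leq -n}$ and $C\in\overline{\langle G\rangle}^{[1-n-A,\,A]}$. To get $D\in\T^{\leq -m-2}$ you must use $n=m+2$.
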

\begin{proof}
By Definition \ref{HCT}, there is a triangle in $\T$:
$$
\bigoplus_{k\geq 1}E_k\lraf{(1-f_*)} \bigoplus_{k\geq 1}E_k\lraf{g} \underrightarrow{\hocolim}(E_*)\lra \bigoplus_{k\geq 1}E_k[1].
$$
Let $E:=\Hocolim(E_{*})$. For any $j>0$, we denote by $e_{j}:E_{j}\to\bigoplus_{k\geq 1}E_{k}$ the canonical injection and define $g_{j}=ge_{j}: E_j\to E$. Clearly, $g_{j}=g_{j+1}f_{j}$.

(1) Suppose that $(E_{*},f_{*})$ is a strong $\Tsb$-approximating system. Then for each $i\in\mathbb{Z}$,
the morphisms $H^i(f_m)$ are isomorphisms for all $m\in \mathbb{N}^+ $ with $m\geq -i$. It follows from Lemma~\ref{tec2} that there is a canonical isomorphism $\colims\,(H^i(E_{*}))\simeq H^i(E)$. Note that $\colims\,(H^i(E_{*}))\simeq H^i(E_m)$ for any $m\geq -i$. Thus $H^i(g_m) $ are isomorphisms whenever $ i \geq -m $. It follows that $H^i(E)\simeq H^i(E_1)$ for any $i\geq -1$. Moreover, since $E_{1} \in \Tsb \subseteq \T^{-}$, it belongs to $\T^{\leq n}$ for some $n > 0$. By Lemma \ref{tec1}, $H^{i}(E_{1}) = 0$  for $i > n$, which forces $H^i(E)=0$ for $i>n$. Still by Lemma \ref{tec1}, we have  $E\in \T^{\leq n}\subseteq \T^{-}$.

(2) Clearly, there exists a morphism $ f\colon E\to F$ such that, for every $j \geq 1 $, the following diagram is commutative:
\[
\begin{tikzcd}
E_j \arrow[r, "g_j"] \arrow[rd] & E \arrow[d, "f"] \\
& F.
\end{tikzcd}
\]
Note that, for every $i\in\mathbb{Z}$, the morphisms $H^{i}(E_{n_i})\to H^{i}(F)$ and $H^{i}(E_{n_i})\to H^{i}(E)$ are isomorphisms, where $n_i:=\max\{-i,1\}$. It follows that $H^{i}(f)$ is an isomorphism. Now,  by Lemma \ref{tec1}, $f$ is an isomorphism.

(3) Without loss of generality, assume $F\in\T^{\leq 0}$. We wish to construct a strong $\Tsb$-approximating system $(C_{*}, h_{*}): C_{1}\xrightarrow{h_{1}}C_{2}\xrightarrow{h_{2}}C_{3}\xrightarrow{h_{3}}\cdots$ for $F$.

By Lemma~\ref{lem-app-induction}, there exists a triangle $C_{1}\xrightarrow{g_{1}} F\to D_{1}\to C_{1}[1]$ in $\T$ with $C_{1}\in\overline{\langle G\rangle}^{[-2-A,\,A]}$ and $D_{1}\in\T^{\leq -3}$. Then $H^i(D_1)=0=H^i(D_1[-1])=0$ for any $i \geq -1$. It follows that $C_{1}\in\Tsb$ and $H^{i}(g_{1})$ is an isomorphism for $i \geq -1$. This completes the construction of $C_1$ in the $\Tsb$-approximating system. Further, we claim that $C_{1}\in{}^{\perp}(\T^{\leq -2-2A})$. In fact, since $\Hom_{\T}(G[-A],\T^{\leq 0})=0$ by Definition \ref{defn-app}, we have $G[-2-A,A]\subseteq{}^{\perp}(\T^{\leq -2-2A})$. Note that ${}^{\perp}(\T^{\leq -2-2A})\subseteq \T$ is closed under direct summands, coproducts and extensions. Thus $\overline{\langle G\rangle}^{[-2-A,\,A]}\subseteq^{\perp}(\T^{\leq -2-2A})$, which implies $C_{1}\in{}^{\perp}(\T^{\leq -2-2A})$.

In the following, we apply Lemma~\ref{lem-app-induction} again to $F$ and obtain a triangle  $C_{2}\xrightarrow{g_{2}}F\to D_{2}\to C_{2}[1]$ in $\T$ with $C_{2}\in\overline{\langle G\rangle}^{[-1-3A,\,A]}$ and $D_{2}\in\T^{\leq -2-2A}$. Then $C_{2}\in\Tsb$. By $-2-2A \leq -4$, the morphism $H^{i}(g_{2})$ is an isomorphism for all $i \geq -2$. Moreover, since $\Hom_{\T}(C_{1}, D_{2}) = 0$, there exists a morphism $h_{1}\colon C_{1} \to C_{2}$ satisfying $g_{1} = g_{2}h_{1}$. This completes the construction of $C_2$ together with the connecting morphism $h_1$ in the $\Tsb$-approximating system. Now we can use Lemma~\ref{lem-app-induction} repeatedly and construct $(C_{*}, h_{*})$ by induction.
\end{proof}

Combining Lemma~\ref{tec1} and Lemma~\ref{lem-strong approximating system}(1), we obtain the following result.

\begin{cor}\label{lem-cauchy seq}
Let $(E_{*},f_{*})$ be a strong $\Tsb$-approximating system.

{\rm (1)} For each $m\in\mathbb{N}^{+}$, there is a triangle $K_{m}\lra E_{m}\lraf{f_{m}}E_{m+1}\lra K_{m}[1]$ with $K_{m}\in\T^{\leq -m}$.

{\rm (2)} For each $m\in\mathbb{N}^{+}$, there is a triangle $L_{m}\lra E_{m}\lraf{g_{m}}\Hocolim(E_{*})\lra L_{m}[1]$ with $L_{m}\in\T^{\leq -m}$.
\end{cor}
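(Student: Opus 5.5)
We need to prove Corollary: (1) for each $m$, a triangle $K_m\to E_m\xrightarrow{f_m}E_{m+1}\to K_m[1]$ with $K_m\in\T^{\leq -m}$; (2) similarly for $g_m$ and $L_m$.

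For (1), the plan is to complete $f_m$ to a triangle $K_m\to E_m\xrightarrow{f_m}E_{m+1}\to K_m[1]$ and show $K_m\in\T^{\leq -m}$. By Lemma~\ref{tec1}, which applies because every $t$-structure in the preferred equivalence class is nondegenerate, it suffices to check that $H^i(K_m)=0$ for all $i\geq -m+1$.

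Apply the cohomological functor $H$ to the triangle to obtain the long exact sequence
$$H^{i-1}(E_m)\xrightarrow{H^{i-1}(f_m)}H^{i-1}(E_{m+1})\to H^i(K_m)\to H^i(E_m)\xrightarrow{H^i(f_m)}H^i(E_{m+1}).$$
By the definition of a strong approximating system, $H^j(f_m)$ is an isomorphism for all $j\geq -m$.

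Now take $i\geq -m+1$. Then $i-1\geq -m$, so $H^{i-1}(f_m)$ is surjective and hence the map $H^{i-1}(E_{m+1})\to H^i(K_m)$ is zero. Also $i\geq -m$, so $H^i(f_m)$ is injective and hence the map $H^i(K_m)\to H^i(E_m)$ is zero. Exactness at $H^i(K_m)$ then forces $H^i(K_m)=0$. This proves (1).

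For (2), the same argument applies with $f_m$ replaced by $g_m\colon E_m\to E=\Hocolim(E_*)$. Here we need that $H^i(g_m)$ is an isomorphism for all $i\geq -m$. This is exactly what Lemma~\ref{lem-strong approximating system}(1) provides: $(E_*,f_*)$ is a strong $\Tsb$-approximating system for $\Hocolim(E_*)$, with structure maps the $g_m$ arising from the defining triangle of the homotopy colimit.

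Completing $g_m$ to a triangle $L_m\to E_m\xrightarrow{g_m}E\to L_m[1]$ and running the identical long-exact-sequence argument gives $H^i(L_m)=0$ for $i\geq -m+1$. By Lemma~\ref{tec1}, $L_m\in\T^{\leq -m}$.

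There is no real obstacle. The only point requiring care is the index bookkeeping: the vanishing of $H^i$ at $i\geq -m+1$ uses that $H^{i-1}(f_m)$ is surjective and $H^i(f_m)$ is injective, and the hypothesis "isomorphism for $i\geq -m$" supplies both.
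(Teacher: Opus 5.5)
Your proof is correct and takes the same approach as the paper, which derives this corollary in one line by combining Lemma~\ref{tec1} (applicable since $t$-structures in the preferred equivalence class are nondegenerate) with Lemma~\ref{lem-strong approximating system}(1). Your long exact sequence argument spells out that combination, and the index bookkeeping is right: isomorphisms for $i \geq -m$ give vanishing of $H^i$ for $i \geq -m+1$.
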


\section{Equivalence and existence of bounded $t$-structures}\label{3}

In this section, we prove Theorem~\ref{main1}. As preparation, we recall the following definition which will also be used in the definition of Gorenstein triangulated categories in the next section.

\begin{defn}{\rm \cite[Remark 8.5.22]{N01}}\label{def bcdual}
Let $\T$ be an $R$-linear, compactly generated triangulated category over a commutative ring $R$, $I$ an injective cogenerator in $R\Modcat$, and $G$ a compact object of $\T$. An object $E\in\T$ is called the \emph{Brown-Comenetz dual} of $G$ (with respect to $I$) if there exists a natural isomorphism of functors from $\T\opp$ to $R\Modcat$:
\[
\Hom_{\T}(-,E) \simeq \Hom_{R}(\Hom_{\T}(G,-),I).
\]
\end{defn}
The classical Brown representability theorem (for example, see \cite[Theorem~3.1]{N96}) implies that for every $G\in\T^c$, the functor $\Hom_{R}(\Hom_{\T}(G,-),I)$ is representable. Thus the Brown-Comenetz dual of any compact object exists and is unique up to isomorphism by the Yoneda lemma. Note that the notion of Brown-Comenetz duality originates from \cite{BZ76}.

\medskip
{\bf Proof of Theorem \ref{main1}(1).}
Let $G$ be a compact generator of $\T$. Then $(\T_G^{\leq 0},\T_G^{\geq 1})$ is a $t$-structure on $\T$ by Lemma \ref{lem-p-w-gen}. Restricting this $t$-structure to $\T^b$ yields a bounded $t$-structure $(\T_G^{\leq 0}\cap\T^b,\T_G^{\geq 1}\cap\T^b)$ on $\T^b$. Let $(\mathcal{D}^{\leq 0},\mathcal{D}^{\geq 1})$ be another bounded $t$-structure on $\T^b$. To show the equivalence of these two $t$-structures, it suffices to show that there exist integers $t$ and $s$ such that
$$\mathcal{D}^{\leq t}\subseteq\T_{G}^{\leq 0}\cap\T^b\subseteq\mathcal{D}^{\leq s}.$$

In fact, since $\T$ is $\mathbb{Z}$-linear and compactly generated, the Brown-Comenetz dual $E$ of $G$ exists, that is, $E\in\T$ and there is a natural isomorphism $$(\dag):\quad \Hom_{\T}(-,E) \simeq D\Hom_{\T}(G,-):\;\T\opp\lra \mathbb{Z}\Modcat,$$
where $D:=\Hom_{\mathbb{Z}}(-,\mathbb{Q}/\mathbb{Z})$.
Note that $\Hom_{\T}(G,G[i])=0$ for $i\gg 0$ by the weak approximation of $\T$, and that $\Hom_{\T}(G,G[i])=0$ for $i\ll 0$ by the boundedness of $G$. It follows from $(\dag)$ that $\Hom_{\T}(G[i],E)\simeq D\Hom_{\T}(G,G[i])=0$ for $|i|\gg 0$. Further, since $\T$ is weakly approximable,
we see from \cite[Corollary 3.10]{BNP23} that $\mathcal{T}^b=\{X\in \mathcal{T}\mid \Hom_{\mathcal{T}}(G[i],X)=0 \;\text{for}\; |i|\gg 0\}$. Thus $E\in\T^b$.

Since $(\mathcal{D}^{\leq 0},\mathcal{D}^{\geq 1})$ is a bounded $t$-structure on $\T^b$, we have $E\in\mathcal{D}^{\geq m+1}$ for some integer $m$. Now, let $X\in\mathcal{D}^{\leq m}$. Then $\Hom_{\T}(X,E[i])=0$ for any $i\leq 0$. Still by $(\dag)$, there are isomorphisms $\Hom_{\T}(X,E[i])\simeq D\Hom_{\T}(G[i],X)$ for $i\in\mathbb{Z}$, and therefore $D\Hom_{\T}(G[i],X)=0$ for $i\leq 0$. Since $\mathbb{Q}/\mathbb{Z}\in\mathbb{Z}\Modcat$ is a cogenerator, the functor $D$ is faithful. Consequently, $\Hom_{\T}(G[i],X)=0$ for $i\leq 0$, that is, $X\in G[0, +\infty)^{\bot}$. Note that $G[-2A,+\infty)^{\perp}\subseteq \mathcal{T}^{\leq -A-1}$ by \cite[Lemma 3.9(iv)]{BNP23}, where $A$ is the chosen integer in Definition~\ref{defn-app}. Thus $X\in \T_G^{\leq A-1}$. This shows $\mathcal{D}^{\leq m}\subseteq\T_{G}^{\leq A-1}\cap\T^b$. Let $t:=m-A+1$. Then $\mathcal{D}^{\leq t}\subseteq\T_{G}^{\leq 0}\cap\T^b$.

Since $G$ is compact and bounded, we have $G\in\T^c\subseteq\T^b$. Then $G\in\mathcal{D}^{\leq s}$ for some integer $s$. Now, we consider the orthogonal subcategory $\mathcal{W}:={}^{\perp}[(\mathcal{D}^{\leq s})^{\perp}]$ of $\T$; see Section \ref{BWAT} for the definitions of left and right orthogonal subcategories of $\T$. Then $\mathcal{D}^{\leq s}\subseteq\mathcal{W}\cap\T^b$. Since $(\mathcal{D}^{\leq 0},\mathcal{D}^{\geq 1})$ is a $t$-structure on $\T^b$, it is clear that $\mathcal{D}^{\geq s+1}=(\mathcal{D}^{\leq s})^{\perp}\cap\T^b\subseteq (\mathcal{D}^{\leq s})^{\perp}$ and $\mathcal{D}^{\leq s}= {}^{\perp}(\mathcal{D}^{\leq s+1})\cap\T^b$. This implies $\mathcal{W}\cap\T^b\subseteq\mathcal{D}^{\leq s}$, and thus $\mathcal{W}\cap\T^b=\mathcal{D}^{\leq s}$. Note that $\mathcal{W}$ is closed under coproducts and extensions in $\T$. It is also closed under positive shifts in $\T$ because $\mathcal{D}^{\leq s}$ is closed under positive shifts in $\T^b$. Since $\T_{G}^{\leq 0}=\overline{\langle G\rangle}^{(-\infty,0]}$ by Lemma \ref{lem-p-w-gen}, we have $\T_{G}^{\leq 0}\subseteq\mathcal{W}$,  and further
$\T_{G}^{\leq 0}\cap\T^b\subseteq\mathcal{W}\cap\T^b=\mathcal{D}^{\leq s}$. \hfill$\square$

\medskip
\begin{rem}
In \cite[Lemma~4.4]{XZ26}, it was shown that all bounded $t$-structures on the bounded derived category of a Grothendieck category are equivalent.
\end{rem}

From now on, let $\T$ be a weakly approximable triangulated category with a bounded compact generator $G$ and with a $t$-structure $(\T^{\leq 0},\T^{\geq 1})$ in the preferred equivalence class. Without loss of generality, assume $(\T^{\leq 0},\T^{\geq 1}) = (\T_{G}^{\leq 0},\T_{G}^{\geq 1})$. Then
$$\T^c=\langle G \rangle \subseteq \T^{sb}\subseteq\T^b$$ (for example, see \cite[Proposition~3.4]{CCZ26}). Let $\U$ be any full triangulated subcategory of $\T$ satisfying $\Tsb\subseteq\U\subseteq\T^b$.
Then $\T^c\subseteq \mathcal{U}$.

We first establish the following simple result.
\begin{lem}\label{lem-bounded-above}
Let $(\mathcal{U}^{\le 0},\mathcal{U}^{\ge 1})$ be a $t$-structure on $\U$. The following statements are equivalent.

{\rm (1)} $\T^{\leq -k}\subseteq{}^{\perp}(\U^{\geq 1})$ for some $k>0$.

{\rm (2)} $\T^{\leq -k} \cap \U \subseteq \mathcal{U}^{\leq 0}$ for some $k > 0$.

{\rm (3) }$(\mathcal{U}^{\le 0},\mathcal{U}^{\ge 1})$ is a bounded above $t$-structure on $\mathcal{U}$.
\end{lem}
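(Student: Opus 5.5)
We prove the cycle of implications $(1)\Rightarrow(2)\Rightarrow(3)\Rightarrow(1)$; the first two are formal and the real work is in $(3)\Rightarrow(1)$.

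For $(1)\Rightarrow(2)$, take $X\in\T^{\leq -k}\cap\U$. By (1), $X\in{}^{\perp}(\U^{\geq 1})$. Since $X\in\U$ and $(\U^{\le 0},\U^{\ge 1})$ is a $t$-structure on $\U$, we have ${}^{\perp}(\U^{\geq 1})\cap\U=\U^{\leq 0}$, so $X\in\U^{\leq 0}$.

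For $(2)\Rightarrow(3)$, let $X\in\U\subseteq\T^b\subseteq\T^-$. Then $X\in\T^{\leq n}$ for some $n$, so $X[n+k]\in\T^{\leq -k}\cap\U\subseteq\U^{\leq 0}$. Hence $X\in\U^{\leq n+k}$, and $\U=\bigcup_m\U^{\leq m}$.

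For $(3)\Rightarrow(1)$, we use that $G\in\T^c\subseteq\U$.
\begin{itemize}
\item Since the $t$-structure is bounded above, $G\in\U^{\leq s}$ for some $s$. By (T1), $G[j]\in\U^{\leq s}$ for all $j\geq 0$, so $G[i]\in\U^{\le 0}$ for all $i\geq s$.
\item Therefore every $Y\in\U^{\geq 1}$ satisfies $\Hom_\T(G[i],Y)=0$ for all $i\geq s$.
\item Now set $\mathcal{V}:={}^{\perp}(\U^{\geq 1})\subseteq\T$. This subcategory is closed under coproducts, extensions and direct summands; it is also closed under positive shifts, because $\U^{\geq 1}$ is closed under negative shifts.
\item By the previous steps $\mathcal{V}$ contains $G[i]$ for all $i\geq s$, and hence contains $\overline{\langle G\rangle}^{(-\infty,-s]}$.
\item By Lemma \ref{lem-p-w-gen}, $\overline{\langle G\rangle}^{(-\infty,-s]}=\T_G^{\leq 0}[s]=\T^{\leq -s}$ (we may assume $s>0$).
\end{itemize}
Taking $k=\max\{s,1\}$ then gives $\T^{\leq -k}\subseteq{}^{\perp}(\U^{\geq 1})$, which is (1).

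The main point needing care is that $\mathcal{V}$ is taken as a left orthogonal inside all of $\T$, not inside $\U$. Only then is it closed under arbitrary coproducts, which is what lets us pass from the compact generator $G$ to the whole aisle $\T^{\leq -s}$. This mirrors the argument with $\mathcal{W}$ in the proof of Theorem \ref{main1}(1).
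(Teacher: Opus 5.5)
Your proof is correct and follows the paper's argument essentially step by step. In $(1)\Rightarrow(2)$ you use ${}^{\perp}(\U^{\geq 1})\cap\U=\U^{\leq 0}$, and in $(2)\Rightarrow(3)$ you shift into $\T^{\leq -k}\cap\U$ using $\U\subseteq\T^b\subseteq\T^-$; in $(3)\Rightarrow(1)$ you put $G(-\infty,-s]$ into ${}^{\perp}(\U^{\geq 1})$, taken in $\T$ and closed under coproducts, extensions and summands, and conclude via $\T^{\leq -s}=\overline{\langle G\rangle}^{(-\infty,-s]}$.
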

\begin{proof}
Since $(\mathcal{U}^{\le 0},\mathcal{U}^{\ge 1})$ is a $t$-structure on $\U$, we have $\U \cap{}^{\perp}(\U^{\geq 1})=\mathcal{U}^{\leq 0}$. Thus $(1)$ implies $(2)$.

Assume that $(2)$ holds. Then $\T^{\leq r} \cap \U \subseteq \mathcal{U}^{\leq r+k}$ for each $r\in \mathbb{Z}$. Since $\U\subseteq\T^{b}$, we have $\T^{\leq r} \cap \U=\T^{\leq r}\cap\T^{b}\cap \U$. Recall that $(\T^{\leq 0}\cap\T^{b},\T^{\geq 1}\cap\T^{b})$ is a bounded $t$-structure on $\T^{b}$ by the proof of Theorem \ref{main1}(1). Consequently, there are equalities
\begin{align*}
\U&=\T^{b}\cap\U
=(\bigcup_{r\in\mathbb{Z}}(\T^{\leq r}\cap\T^{b}))\cap\U
=\bigcup_{r\in\mathbb{Z}}(\T^{\leq r}\cap\T^{b}\cap\U)\\
&=\bigcup_{r\in\mathbb{Z}}(\T^{\leq r}\cap\U)
\subseteq \bigcup_{r\in\mathbb{Z}}\mathcal{U}^{\leq r+k}
=\bigcup_{s\in\mathbb{Z}}\mathcal{U}^{\leq s},
\end{align*}
and therefore $(\mathcal{U}^{\le 0},\mathcal{U}^{\ge 1})$ is bounded above. Thus $(2)$ implies $(3)$.

Assume that $(3)$ holds. By $G\in\T^c\subseteq\mathcal{U}$, there exists $k > 0$ such that $G[k] \in \mathcal{U}^{\leq 0}$. Since $\mathcal{U}^{\leq 0}\subseteq \mathcal{U}$ is closed under positive shifts and $(\mathcal{U}^{\le 0},\mathcal{U}^{\ge 1})$ is a $t$-structure on $\U$, we have  $G(-\infty,-k] \subseteq\mathcal{U}^{\leq 0}\subseteq {}^{\perp}(\U^{\geq 1})$. Note that ${}^{\perp}(\U^{\geq 1})\subseteq \T$ is closed under direct summands, coproducts and extensions. Thus $\T^{\leq -k}=\overline{\langle G\rangle}^{(-\infty,-k]}\subseteq{}^{\perp}(\U^{\geq 1})$. This shows that $(3)$ implies $(1)$.
\end{proof}

\begin{lem}\label{lem-lifting-$t$-stucture}
Let $(\mathcal{U}^{\le 0},\mathcal{U}^{\ge 1})$ be a bounded above $t$-structure on $\U$.
Then $(\T^b\cap{^{\perp}(\mathcal{U}^{\ge 1})},\mathcal{U}^{\ge 1})$ is a bounded above $t$-structure on $\mathcal{T}^b$ and there exists some $k>0$ such that $\mathcal{U}^{\geq 1} \subseteq \mathcal{T}^{\geq -k+1} \cap \mathcal{T}^{b}$.
\end{lem}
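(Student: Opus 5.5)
The idea is to use Lemma~\ref{lem-bounded-above} to control $\mathcal{U}^{\geq 1}$ by the preferred $t$-structure on $\T$. Then build the truncation triangles in $\T^b$ by approximating an arbitrary bounded object by a strongly bounded one (which lies in $\U$) and truncating that approximation inside $\U$.

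\emph{The containment and the axioms (T1), (T2).} Since $(\mathcal{U}^{\le 0},\mathcal{U}^{\ge 1})$ is bounded above, Lemma~\ref{lem-bounded-above} gives some $k>0$ with $\T^{\leq -k}\subseteq {}^{\perp}(\U^{\geq 1})$. Hence
$$\U^{\geq 1}\subseteq (\T^{\leq -k})^{\perp}=\T^{\geq -k+1}.$$
Since $\U^{\geq 1}\subseteq \U\subseteq \T^b$, we get the asserted inclusion $\mathcal{U}^{\geq 1} \subseteq \mathcal{T}^{\geq -k+1}\cap\T^b$. Put $\mathcal{V}:=\T^b\cap{}^{\perp}(\U^{\geq 1})$. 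Axiom (T2) holds by definition. For (T1), $\U^{\geq 1}\subseteq \U^{\geq 1}[1]$ already holds in $\U$. Also $\mathcal{V}[1]\subseteq\mathcal{V}$, because $\U^{\geq 1}[-1]\subseteq\U^{\geq 1}$.

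\emph{Bounded above.} We have $\T^{\leq -k}\cap\T^b\subseteq\mathcal{V}$. Since $\T^b=\bigcup_r(\T^{\leq r}\cap\T^b)$, every object of $\T^b$ lies in some shift $\mathcal{V}[-n]$. So, once (T3) is established, the $t$-structure is bounded above.

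\emph{Axiom (T3), the main step.} Let $X\in\T^b$; after shifting we may assume $X\in\T^{\leq 0}$. Apply Lemma~\ref{lem-app-induction} with $m:=k+1$. This gives a triangle $C\to X\to D\to C[1]$ with $D\in\T^{\leq -k-1}$ and $C\in\overline{\langle G\rangle}^{[-k-A,\,A]}\subseteq\Tsb\subseteq\U$.

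Truncate $C$ in $\U$: take a triangle $C'\to C\xrightarrow{p} C''\to C'[1]$ with $C'\in\U^{\leq 0}$ and $C''\in\U^{\geq 1}$. The composite $D[-1]\to C\xrightarrow{p} C''$ vanishes, because
$$D[-1]\in\T^{\leq -k}\subseteq{}^{\perp}(\U^{\geq 1}).$$
Therefore $p$ factors through $C\to X$, giving a morphism $q\colon X\to C''$. Complete $q$ to a triangle $X'\to X\xrightarrow{q} C''\to X'[1]$. By the octahedral axiom applied to $C\to X\to C''$, there is a triangle $C'\to X'\to D\to C'[1]$.

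Now $C'\in\U^{\leq 0}\subseteq{}^{\perp}(\U^{\geq 1})$ and $D\in\T^{\leq -k}\subseteq {}^{\perp}(\U^{\geq 1})$. Since ${}^{\perp}(\U^{\geq 1})$ is closed under extensions, $X'\in{}^{\perp}(\U^{\geq 1})$. Moreover $X'\in\T^b$, because $X$ and $C''$ lie in the triangulated subcategory $\T^b$. Hence $X'\to X\to C''\to X'[1]$ is the required truncation triangle, with $X'\in\mathcal{V}$ and $C''\in\U^{\geq 1}$.

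The obstacle is exactly this (T3) step. Truncations exist only inside $\U$, so one must choose the approximation depth $m$ large enough that the error term $D$ (and $D[-1]$) lies in $\T^{\leq -k}$. That is what lets the map $p$ factor through $X$ and puts the fibre $X'$ in ${}^{\perp}(\U^{\geq 1})$.
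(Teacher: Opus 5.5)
Your proof is correct, and it takes a genuinely different and considerably shorter route than the paper's.

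\textbf{The paper's route.} The paper uses Lemma~\ref{lem-bounded-above} only in its weak form (2), $\T^{\leq -k}\cap\U\subseteq\U^{\leq 0}$. It therefore has to run the whole approximation machinery:
\begin{itemize}
\item it writes $E\in\T^b$ as $\Hocolim(E_*)$ of a strong $\Tsb$-approximating system and truncates every $E_i$ inside $\U$;
\item it uses a $3\times 3$ diagram and Corollary~\ref{lem-cauchy seq}(1) to show that $E_i^{\geq 1}\to E_{i+1}^{\geq 1}$ is an isomorphism for $i\geq k$;
\item it takes homotopy colimits in a second $3\times 3$ diagram to obtain the triangle $Y\to E\to E_k^{\geq 1}$;
\item it proves the inclusion $\U^{\geq 1}\subseteq\T^{\geq -k+1}\cap\T^b$ separately through $(\diamondsuit)$, again with approximating systems.
\end{itemize}

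\textbf{Your route.} You use form (1), $\T^{\leq -k}\subseteq{}^{\perp}(\U^{\geq 1})$ for all of $\T^{\leq -k}$. Its proof already uses that ${}^{\perp}(\U^{\geq 1})$ is closed under coproducts in $\T$. With this in hand:
\begin{itemize}
\item the inclusion is just $(\T^{\leq -k})^{\perp}=\T^{\geq -k+1}$;
\item bounded-aboveness is just $\T^{\leq -k}\cap\T^b\subseteq\mathcal{V}$;
\item for (T3), a single approximation triangle $C\to X\to D\to C[1]$ with $C\in\Tsb$ and $D\in\T^{\leq -k-1}$ suffices. Since $D[-1]\in{}^{\perp}(\U^{\geq 1})$, the map $p$ factors through $X$. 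The octahedron on $C\to X\to C''$ has cones $D$, $C'[1]$, $X'[1]$, which gives $D[-1]\to C'\to X'\to D$. This places $X'$ in $\{C'\}\ast\{D\}\subseteq{}^{\perp}(\U^{\geq 1})$.
\end{itemize}

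\textbf{What each buys.} Your route avoids homotopy colimits, Lemma~\ref{tec2} and approximating systems entirely for this lemma. The paper's route additionally exhibits the truncation of $E$ as the stabilized truncations of its approximations, which the statement does not need.

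\textbf{One small repair.} The reduction ``after shifting we may assume $X\in\T^{\leq 0}$'' is not literally valid, because the pair $(\mathcal{V},\U^{\geq 1})$ is not shift-invariant. Shifting back a truncation of $X[r]$ gives a truncation for $(\mathcal{V}[-r],\U^{\geq r+1})$, not for $(\mathcal{V},\U^{\geq 1})$. The fix is easy. For $X\in\T^{\leq r}\cap\T^b$ with $r\geq 0$, apply Lemma~\ref{lem-app-induction} to $X[r]$ with $m=k+1+r$, then shift back by $[-r]$. This gives $C\to X\to D\to C[1]$ with $C\in\Tsb$ and $D\in\T^{\leq -k-1}$, and these are the only two properties the rest of your argument uses.
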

\begin{proof}
We first prove that $(\T^b\cap{^{\perp}(\mathcal{U}^{\ge 1})},\mathcal{U}^{\ge 1})$ is a $t$-structure on $\T^{b}$. It is straightforward to verify that $(\T^b\cap{^{\perp}(\mathcal{U}^{\ge 1})},\mathcal{U}^{\ge 1})$ satisfies (T1) and (T2) in Definition \ref{defn-$t$-structure}. It suffices to show that $(\T^b\cap{^{\perp}(\mathcal{U}^{\ge 1})},\mathcal{U}^{\ge 1})$ satisfies the axiom (T3).

Let $E \in \mathcal{T}^b$. By Lemma~\ref{lem-strong approximating system}(2)(3), $E$ admits a strong $\Tsb$-approximating system $(E_{*}, f_{*})$ such that $E \simeq \Hocolim(E_{*})$.

By (T3) for the $t$-structure $(\mathcal{U}^{\le 0},\mathcal{U}^{\ge 1})$ on $\U$, we obtain two chains $(E_{*}^{\le 0},f_{*}^{\le 0})$ and $(E_{*}^{\ge 1},f_{*}^{\ge 1})$ in $\U$ with $E_{i}^{\le 0}\in \mathcal{U}^{\le 0}$ and $E_i^{\ge 1}\in \mathcal{U}^{\ge 1}$ for all $i\in \mathbb{N}^{+}$ such that there are commutative squares of morphisms:
\[
\begin{tikzcd}
E_{i}^{\leq 0} \arrow[r] \arrow[d, "f_{i}^{\leq 0}"'] & E_{i} \arrow[r] \arrow[d, "f_{i}"'] & E_{i}^{\geq 1} \arrow[d, "f_{i}^{\geq 1}"'] \arrow[r] & {E_{i}^{\leq 0}[1]} \arrow[d, "{f_{i}^{\leq 0}[1]}"'] \\
E_{i+1}^{\leq 0} \arrow[r]                            & E_{i+1} \arrow[r]                   & E_{i+1}^{\geq 1} \arrow[r]                            & {E_{i+1}^{\leq 0}[1].}
\end{tikzcd}
\]
By Definition \ref{defn-$t$-structure}(T1)(T2), it can be shown that $f_{i}^{\leq 0}$ and $f_{i}^{\geq 1}$ are unique morphisms in $\T$ such that the left and middle squares commute, respectively.
Further, by the $3\times 3$ lemma for triangles in a triangulated category, there exist dotted morphisms making the following diagram of triangles in $\U$ commutative:
$$\xymatrix{
E_i^{\ge 1}[-1] \ar[d] \ar[r]
& E_i^{\le 0} \ar[r] \ar@{-->}[d]_-{f_{i}^{\leq 0}}
& E_i \ar[r] \ar[d]_-{f_{i}}
& E_i^{\ge 1} \ar[d]_-{ f_{i}^{\geq 1}} \\
E_{i+1}^{\ge 1}[-1] \ar[d] \ar[r]
& E_{i+1}^{\le 0} \ar[r] \ar@{-->}[d]
& E_{i+1} \ar[r] \ar[d]
& E_{i+1}^{\ge 1} \ar[d] \\
D''_i[-1] \ar@{-->}[r]^{a_i}
& D'_i \ar@{-->}[r]^{b_i}\ar@{-->}[d]
& D_i \ar@{-->}[r]^{c_i}\ar[d]
& D''_i\ar[d]\\
& E_i^{\le 0}[1] \ar[r]
& E_i[1] \ar[r]
& E_i^{\ge 1}[1].
}$$
Note that $E_{i+1}^{\le 0}\in \mathcal{U}^{\le 0}$ and $E_i^{\le 0}[1]\in \mathcal{U}^{\le -1}\subseteq \mathcal{U}^{\le 0}$. Since $\mathcal{U}^{\le 0}$ is closed under extensions, $D'_i\in \mathcal{U}^{\le 0}$.
Similarly, we obtain $D''_i\in \mathcal{U}^{\ge 0}$, due to $E_{i+1}^{\ge 1},E_i^{\ge 1}[1]\in \mathcal{U}^{\ge 0}$. As $(\mathcal{U}^{\le 0},\mathcal{U}^{\ge 1})$ is bounded above, we see from Lemma~\ref{lem-bounded-above}(2) that $\T^{\leq -k}\cap\U\subseteq\U^{\leq 0}$ for some integer $k>0$. Now, we consider $i\geq k$. By Corollary~\ref{lem-cauchy seq}(1), $D_i\in\T^{\leq -i-1}\cap\U\subseteq\U^{\leq -1}$. It follows from $\Hom_{\mathcal{T}}(\mathcal{U}^{\le -1},\mathcal{U}^{\ge 0})=0$ that $\Hom_{\mathcal{T}}(D_i,D''_i)=0$. This implies $c_i=0$, and therefore $D'_i\simeq D''_i[-1]\oplus D_i$.
Note that $D'_i\in \mathcal{U}^{\le 0}$, $D''_i[-1]\in \mathcal{U}^{\ge 1}$ and $\Hom_{\mathcal{T}}(\mathcal{U}^{\le 0},\mathcal{U}^{\ge 1})=0$.
This forces $\Hom_{\mathcal{T}}(D'_i,D''_i[-1])=0$. Thus $D''_i[-1]=0$ (equivalently, $D''_i=0$) and $f_i^{\geq 1}$ is an isomorphism for $i\geq k$. It follows that $\Hocolim(E_{*}^{\geq 1})\simeq E_{k}^{\ge 1}\in \mathcal{U}^{\ge 1}.$

Again by the $3\times 3$ lemma for triangles in a triangulated category, there exists an object $Y\in \mathcal{T}$ and dotted morphisms making the following diagram of triangles in $\mathcal{T}$ commutative:
$$\xymatrix{
\bigoplus_{i\ge 1}E_i^{\ge 1}[-1] \ar[d] \ar[r]
&\bigoplus_{i\ge 1} E_i^{\le 0} \ar[r] \ar@{-->}[d]
& \bigoplus_{i\ge 1}E_i \ar[r] \ar[d]_-{(1-f_*)}
& \bigoplus_{i\ge 1}E_i^{\ge 1} \ar[d]_-{(1-f_*^{\geq 1})} \\
\bigoplus_{i\ge 1}E_i^{\ge 1}[-1] \ar[d] \ar[r]
&\bigoplus_{i\ge 1} E_i^{\le 0} \ar[r] \ar@{-->}[d]
&  \bigoplus_{i\ge 1}E_i \ar[r] \ar[d]
& \bigoplus_{i\ge 1}E_i^{\ge 1} \ar[d] \\
E_{k}^{\ge 1}[-1] \ar@{-->}[r]
& Y \ar@{-->}[r] \ar@{-->}[d]
& \Hocolim(E_{*}) \ar@{-->}[r] \ar[d]
& \Hocolim(E_{*}^{\geq 1})\ar[d]\\
& \bigoplus_{i\ge 1} E_i^{\le 0}[1] \ar[r]
& \bigoplus_{i\ge 1}E_i[1] \ar[r]
& \bigoplus_{i\ge 1}E_i^{\ge 1}[1].
}$$
Note that $\Hocolim(E_{*}^{\geq 1})\simeq E_{k}^{\ge 1}\in \mathcal{U}^{\ge 1}\subseteq \mathcal{T}^b$ and $\Hocolim(E_{*}) \simeq E\in \mathcal{T}^b$. Consequently, there is a triangle in $\mathcal{T}^b$:
$$ E_{k}^{\ge 1}[-1]\lra Y \lra E\lra E_{k}^{\ge 1}.$$
To verify the axiom (T3) for $(\T^{b}\cap{}^{\perp}(\mathcal{U}^{\ge 1}), \mathcal{U}^{\ge 1})$, we only need to show $Y\in {^{\perp}(\mathcal{U}^{\ge 1})}$.

In fact, since $\Hom_{\mathcal{T}}(\mathcal{U}^{\le 0},\mathcal{U}^{\ge 1})=0$, we have  $\Hom_{\mathcal{T}}(E_i^{\le 0},\mathcal{U}^{\ge 1})=0$ and $\Hom_{\mathcal{T}}(E_i^{\le 0}[1],\mathcal{U}^{\ge 1})=0$ for all $i\ge 1$.
Then $\Hom_{\mathcal{T}}(\bigoplus_{i\ge 1}E_i^{\le 0},\mathcal{U}^{\ge 1})\simeq \prod_{i\ge 1}\Hom_{\mathcal{T}}(E_i^{\le 0},\mathcal{U}^{\ge 1})=0$ and $\Hom_{\mathcal{T}}(\bigoplus_{i\ge 1}E_i^{\le 0}[1],\mathcal{U}^{\ge 1})\simeq \prod_{i\ge 1}\Hom_{\mathcal{T}}(E_i^{\le 0}[1],\mathcal{U}^{\ge 1})=0$.
Thus $\Hom_{\mathcal{T}}(Y,\mathcal{U}^{\ge 1})=0$ by the second column of the above diagram.

Up to now, we have shown that $(\T^{b}\cap{}^{\perp}(\mathcal{U}^{\ge 1}),\mathcal{U}^{\ge 1})$ is a $t$-structure on $\mathcal{T}^b$. In the following, we show $\mathcal{U}^{\geq 1} \subseteq \mathcal{T}^{\geq -k+1} \cap \mathcal{T}^{b}$. Since $\T^{\leq -k}\cap\U\subseteq\U^{\leq 0}$, it is clear that $\U^{\geq 1}= (\mathcal{U}^{\le 0})^{\perp}\cap\U\subseteq (\mathcal{T}^{\le -k}\cap \U)^{\perp}\cap\T^{b}.$ So, it suffices to show $(\mathcal{T}^{\le -k}\cap \U)^{\perp}\cap\T^{b}=\T^{\geq -k+1}\cap\T^{b}$.

Actually, by the assumption $\Tsb\subseteq \U\subseteq \T^{b}$, we have $\T^{\leq -k}\cap \Tsb \subseteq \T^{\leq -k}\cap\U \subseteq \T^{\leq -k}\cap \T^{b}.$ Then $$\T^{\geq -k+1}\cap\T^{b}=(\T^{\leq -k}\cap \T^{b})^{\perp} \cap\T^{b} \subseteq (\T^{\leq -k}\cap\U)^{\perp} \cap\T^{b} \subseteq (\T^{\leq -k}\cap\Tsb)^{\perp}\cap\T^{b},$$
where the first identity follows from the $t$-structure $(\T^{\leq 0}\cap\T^{b},\T^{\geq 1}\cap\T^{b})$.
So, it is enough to show
$$(\diamondsuit):\quad (\T^{\leq -k}\cap\Tsb)^{\perp}\cap\T^{b}\subseteq (\T^{\leq -k}\cap\T^{b})^{\perp}\cap\T^{b}.$$

For this aim, let $Y\in(\T^{\leq -k}\cap\Tsb)^{\perp}\cap\T^{b}$ and $X\in\T^{\leq -k}\cap\T^{b}$. Then there exists an integer $r\geq k$ such that $Y\in\T^{\geq -r}$. By Lemma~\ref{lem-strong approximating system}(3), $X$ admits a strong $\Tsb$-approximating system $(X_{*}, h_{*})$ with $X \simeq \Hocolim(X_{*})$.
Further, by Corollary~\ref{lem-cauchy seq}(2), we obtain a triangle
$L_{r} \to X_{r} \to X \to L_{r}[1]$ in $\T$ with $L_{r} \in \mathcal{T}^{\leq -r} \subseteq \mathcal{T}^{\leq -k}$. This implies $\Hom_{\mathcal{T}}(L_{r}[1], Y) = 0$.
Since $X \in \mathcal{T}^{\leq -k}$, we have $ X_r\in\mathcal{T}^{\leq -k}\cap \Tsb$.
Note that $Y\in(\T^{\leq -k}\cap\Tsb)^{\perp}$, and further $\Hom_{\mathcal{T}}(X_{r}, Y) = 0$.
It follows that $\Hom_{\mathcal{T}}(X, Y) = 0$. This shows $(\diamondsuit)$, and thus $\mathcal{U}^{\geq 1} \subseteq \mathcal{T}^{\geq -k+1} \cap \mathcal{T}^{b}$.

Finally, we prove that $(\T^{b}\cap{}^{\perp}(\mathcal{U}^{\ge 1}), \mathcal{U}^{\ge 1})$ is bounded above.

Note that $(\T^{\leq -k}\cap\T^{b},\T^{\geq -k+1}\cap\T^{b})$ is a bounded $t$-structure on $\T^b$.
Together with $\mathcal{U}^{\geq 1} \subseteq \mathcal{T}^{\geq -k+1} \cap \mathcal{T}^{b}$, we obtain
$\T^{\leq -k}\cap\T^{b}={}^{\perp}(\T^{\geq -k+1}\cap\T^{b})\cap\T^{b}\subseteq{}^{\perp}(\U^{\geq 1})\cap\T^{b}$. Thus $(\T^{b}\cap{}^{\perp}(\mathcal{U}^{\ge 1}),\mathcal{U}^{\ge 1})$ is a bounded above $t$-structure on $\mathcal{T}^b$.
\end{proof}

\begin{lem}\label{lem-equi-$t$-structure}
Let $(\mathcal{U}^{\le 0},\mathcal{U}^{\ge 1})$ be a bounded $t$-structure on $\U$. Suppose that $\mathcal{T}$ has finite strong finitistic dimension (see Definition~{\rm \ref{defn-fd2}}). Then the $t$-structure $(\T^{b}\cap{}^{\perp}(\mathcal{U}^{\ge 1}),\mathcal{U}^{\ge 1})$ on $\T^b$ in Lemma {\rm \ref{lem-lifting-$t$-stucture}} is equivalent to the bounded $t$-structure $(\mathcal{T}^{\le 0}\cap \mathcal{T}^b,\mathcal{T}^{\ge 1}\cap \mathcal{T}^b)$ on $\T^b$. In particular, $(\T^{b}\cap{}^{\perp}(\mathcal{U}^{\ge 1}),\mathcal{U}^{\ge 1})$ is a bounded $t$-structure on $\T^{b}$.
\end{lem}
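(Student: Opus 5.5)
The plan is to prove two inclusions. By the definition of equivalence, it suffices to find integers $k$ and $s$ with
$$\T^{\leq -k}\cap\T^b\subseteq \T^b\cap{}^{\perp}(\mathcal{U}^{\ge 1})\subseteq \T^{\leq s}\cap\T^b .$$
Once this holds, boundedness of $(\T^{b}\cap{}^{\perp}(\mathcal{U}^{\ge 1}),\mathcal{U}^{\ge 1})$ follows, because it is equivalent to the bounded $t$-structure $(\T^{\le 0}\cap\T^b,\T^{\ge 1}\cap\T^b)$.

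The first inclusion was already obtained at the end of the proof of Lemma~\ref{lem-lifting-$t$-stucture}. That proof gives $k>0$ with $\mathcal{U}^{\ge 1}\subseteq \T^{\ge -k+1}\cap\T^b$, and hence $\T^{\le -k}\cap\T^b\subseteq{}^{\perp}(\mathcal{U}^{\ge1})\cap\T^b$. So all the work is in the second inclusion, and this is the only place where finite strong finitistic dimension and the boundedness below of $(\mathcal{U}^{\le 0},\mathcal{U}^{\ge 1})$ are used.

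For the second inclusion, I fix $p$ with $G\in\mathcal{U}^{\ge -p}$; such a $p$ exists since $G\in\T^c\subseteq\U$ and the $t$-structure on $\U$ is bounded below. Then $G[i]\in\mathcal{U}^{\ge 1}$ for all $i\le -p-1$. I also fix $m$ as in Definition~\ref{defn-fd2}.

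Let $X\in\T^b\cap{}^{\perp}(\mathcal{U}^{\ge1})$. By Lemma~\ref{lem-strong approximating system}(3) and Corollary~\ref{lem-cauchy seq}(2), for each $r$ there is a triangle $L_r\to X_r\to X\to L_r[1]$ with $X_r\in\Tsb$ and $L_r\in\T^{\le -r}$.

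The key step is to show $X_r\in{}^{\perp}(\mathcal{U}^{\ge 1})$ for $r\ge k$. Take $W\in\mathcal{U}^{\ge1}$ and apply $\Hom_\T(-,W)$ to the triangle, giving the exact sequence $\Hom_\T(X,W)\to\Hom_\T(X_r,W)\to\Hom_\T(L_r,W)$. The left term vanishes by the choice of $X$. The right term vanishes because $L_r\in\T^{\le -r}\subseteq\T^{\le -k}$ and $W\in\T^{\ge -k+1}$. Hence $X_r\in\Tsb\cap{}^{\perp}(\mathcal{U}^{\ge1})$.

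In particular $\Hom_\T(X_r,G[i])=0$ for all $i\le -p-1$. Equivalently, the object $Z:=X_r[p]\in\Tsb$ satisfies $\Hom_\T(Z,G[i])=0$ for all $i\le -1$. Finite strong finitistic dimension then puts $Z$ in $\overline{\langle G\rangle}^{[n,\,m]}$ for some $n\le m$.

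Next I check that $\overline{\langle G\rangle}^{[n,\,m]}\subseteq\T^{\le m}$. Each $G[-j]$ with $j\le m$ lies in $\overline{\langle G\rangle}^{(-\infty,m]}=\T^{\le m}$, and this aisle is closed under coproducts, extensions and direct summands. Therefore $X_r\in\T^{\le m+p}$ for every $r\ge k$.

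Finally I pass from $X_r$ to $X$. For $r\ge\max\{k,1\}$, the map $H^j(X_r)\to H^j(X)$ is an isomorphism for all $j\ge -r$; this holds because $L_r,L_r[1]\in\T^{\le -r}$, via the long exact sequence. Choosing $r>\max\{k,-(m+p)\}$, we get $H^j(X)=0$ for all $j>m+p$, so $X\in\T^{\le m+p}$ by Lemma~\ref{tec1}. Thus $s:=m+p$ works.

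The main obstacle is the key step above: transferring the orthogonality ${}^{\perp}(\mathcal{U}^{\ge1})$ from the unbounded object $X$ to the strongly bounded approximants $X_r$. It rests on the bound $\mathcal{U}^{\ge 1}\subseteq\T^{\ge -k+1}$ from Lemma~\ref{lem-lifting-$t$-stucture}, which kills $\Hom_\T(L_r,W)$. Without that step, the finitistic hypothesis, which only concerns objects of $\Tsb$, could not be applied at all.
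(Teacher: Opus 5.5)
Your proof is correct and is essentially the paper's argument read on the aisle side. The paper proves the equivalent coaisle inclusion $\mathcal{T}^{\geq B+1}\cap\mathcal{T}^b\subseteq\mathcal{U}^{\geq s+1}$ by taking right orthogonals of the finitistic-dimension inclusion, and then uses exactly your key step (approximants of an object of $\mathcal{T}^b\cap{}^{\perp}(\mathcal{U}^{\geq s+1})$ stay in ${}^{\perp}(\mathcal{U}^{\geq s+1})$) to prove $(\ast\ast)$; you instead apply the finitistic hypothesis to the approximants $X_r$ directly, which avoids identifying a double orthogonal with $\mathcal{U}^{\geq s+1}$ (one minor slip: $L_r\in\mathcal{T}^{\leq -r}$ alone only yields $H^j(X_r)\cong H^j(X)$ for $j\geq -r+1$, with $j=-r$ coming from the definition of a strong approximating system, but you only need $j>m+p\geq -r+1$, or simply $X\in\mathcal{T}^{\leq m+p}\ast\mathcal{T}^{\leq -r-1}\subseteq\mathcal{T}^{\leq m+p}$).
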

\begin{proof}
By Lemma~\ref{lem-lifting-$t$-stucture}, there exists an integer $n>0$ such that $\mathcal{U}^{\geq 1} \subseteq \T^{\geq -n+1} \cap \T^{b}$.
It suffices to prove that there exists an integer $m$ such that $\T^{\geq -m+1}\cap\T^{b}\subseteq\mathcal{U}^{\geq 1}$.

Since $(\mathcal{U}^{\le 0},\mathcal{U}^{\ge 1})$ is a bounded $t$-structure on $\mathcal{U}$ and $G\in\T^c\subseteq \mathcal{U}$, we have $G\in \mathcal{U}^{\ge s}$ for some integer $s$.
Then $G[1,+\infty)\subseteq \mathcal{U}^{\ge s+1}$. Moreover, since $\T$ has finite strong finitistic dimension, there exists an integer $B$ such that $\Tsb\cap{}^{\perp}(G[1,+\infty))\subseteq \bigcup_{A\le B}\overline{\langle G\rangle}^{[A,\,B]}$. Consequently,
\[
\Tsb\cap{}^{\perp}(\mathcal{U}^{\geq s+1})\subseteq \Tsb\cap{}^{\perp}(G[1,+\infty))\subseteq \bigcup_{A\le B}\overline{\langle G\rangle}^{[A,\,B]}.
\vspace{-1.0em}\]
It follows from $(\bigcup_{A\le B}\overline{\langle G\rangle}^{[A,\,B]})^{\perp}=G(-\infty,B]^{\perp}$ that $G(-\infty,B]^{\perp}\cap\T^b\subseteq(\Tsb\cap{}^{\perp}(\mathcal{U}^{\ge s+1}) )^{\perp}\cap\T^{b}.$
By our convention, we take $(\T^{\leq 0},\T^{\geq 1}) = (\T_{G}^{\leq 0},\T_{G}^{\geq 1})$. By Lemma \ref{lem-p-w-gen}, $\T^{\geq B+1}=G(-\infty,B]^{\perp}$. Thus
$$
(\ast)\quad \T^{\geq B+1}\cap\T^b\subseteq(\Tsb\cap{}^{\perp}(\mathcal{U}^{\ge s+1}) )^{\perp}\cap\T^{b}.
$$
In the following, we show
\[
(\ast\ast)\quad(\Tsb\cap{}^{\perp}(\mathcal{U}^{\ge s+1}) )^{\perp}\cap\T^{b}\subseteq (\T^b\cap{}^{\perp}(\mathcal{U}^{\ge s+1}) )^{\perp}\cap\T^{b}.
\]
Actually, this inclusion is an equality since the right side is always included in the left side.

For this aim, let $N\in(\Tsb\cap{}^{\perp}(\mathcal{U}^{\ge s+1}) )^{\perp}\cap\T^{b}$ and $M\in\T^b\cap{}^{\perp}(\mathcal{U}^{\ge s+1}) $.
Since $(\mathcal{U}^{\le 0},\mathcal{U}^{\ge 1})$ is a bounded above $t$-structure on $\mathcal{U}$, there exists an integer $k>0$ such that $\T^{\leq-(k-s)}\subseteq{}^{\perp}(\U^{\geq s+1})$ by Lemma~\ref{lem-bounded-above}(1). By $N \in \mathcal{T}^{b}$, we have $N \in \mathcal{T}^{\geq -r}$ for some integer $r\geq \max\{1, k-s\}$. Moreover, by Lemma~\ref{lem-strong approximating system}(3), $M$ admits a strong $\Tsb$-approximating system $(M_{*}, f_{*})$ with $M \simeq \Hocolim(M_{*})$. Combining this with  Corollary \ref{lem-cauchy seq}(2), we obtain a triangle
$L_{r} \to M_{r} \to M \to L_{r}[1]$ in $\T$ with $L_{r} \in \mathcal{T}^{\leq -r} \subseteq \mathcal{T}^{\leq -(k-s)}\subseteq{}^{\perp}(\U^{\geq s+1})$. It follows from $M\in\T^b\cap{}^{\perp}(\mathcal{U}^{\ge s+1}) $ that $M_{r}\in\Tsb\cap{}^{\perp}(\U^{\geq s+1})$.
This implies $\Hom_{\mathcal{T}}(M_{r}, N) = 0$, due to $N\in(\Tsb\cap{}^{\perp}(\mathcal{U}^{\ge s+1}) )^{\perp}$. Note that $\Hom_{\mathcal{T}}(L_{r}[1], N) = 0$ since $L_{r}[1] \in \mathcal{T}^{\leq -r-1}$ and $N\in\T^{\geq -r}$. Thus $\Hom_{\mathcal{T}}(M, N) = 0$. This shows $(\ast\ast)$.

By ($\ast$) and ($\ast\ast$), $\T^{\geq B+1}\cap\T^b\subseteq(\T^b\cap{}^{\perp}(\mathcal{U}^{\ge s+1}) )^{\perp}\cap\T^{b}$. Since $(\T^{b}\cap{}^{\perp}(\mathcal{U}^{\ge 1}),\mathcal{U}^{\ge 1})$ is a $t$-structure on $\mathcal{T}^b$ by Lemma \ref{lem-lifting-$t$-stucture}, we have $(\T^b\cap{}^{\perp}(\mathcal{U}^{\ge s+1}) )^{\perp}\cap\T^{b}=\mathcal{U}^{\geq s+1}$. This implies $\T^{\geq B+1}\cap\T^b\subseteq\mathcal{U}^{\geq s+1}$. Let $m:=s-B$. Then $\mathcal{T}^{\geq -m+1}\cap\T^b\subseteq\mathcal{U}^{\geq 1}$.
Thus $\T^{\geq -m+1}\cap\T^{b}\subseteq\mathcal{U}^{\geq 1} \subseteq \T^{\geq -n+1} \cap \T^{b}$.
This shows that the $t$-structures $(\T^{b}\cap{}^{\perp}(\mathcal{U}^{\ge 1}),\mathcal{U}^{\ge 1})$ and  $(\mathcal{T}^{\le 0}\cap \mathcal{T}^b,\mathcal{T}^{\ge 1}\cap \mathcal{T}^b)$ are equivalent.
\end{proof}

{\bf Proof of Theorem \ref{main1}(2).} Assume that $\mathcal{U}$ admits a bounded $t$-structure $(\mathcal{U}^{\le 0},\mathcal{U}^{\ge 1})$. By Lemma~\ref{lem-lifting-$t$-stucture}, $(\T^b\cap{}^\perp(\mathcal{U}^{\ge 1}),\mathcal{U}^{\ge 1})$ is a $t$-structure on $\mathcal{T}^b$. Since $\T$ has finite strong finitistic dimension, it follows from  Lemma \ref{lem-equi-$t$-structure} that $(\T^b\cap{}^\perp(\mathcal{U}^{\ge 1}),\mathcal{U}^{\ge 1})$ is a bounded $t$-structure on $\mathcal{T}^b$.
Thus $\mathcal{T}^b=\bigcup_{v\in \mathbb{Z}}\mathcal{U}^{\ge v}=\mathcal{U}$.
But this contradicts the assumption $\mathcal{U}\subsetneq\mathcal{T}^b$.
 \hfill$\square$

\section{Triangulated categories with finite strong finitistic dimension}\label{4}

In this section, we first introduce the concept of Gorenstein triangulated categories and then prove Proposition \ref{FGD} and Corollary \ref{mainapp}. Finally, we construct triangulated categories with finite strong finitistic dimension via recollements  (Proposition~\ref{construct1}). Consequently, Theorem \ref{main1} can also be applied to non-Gorenstein triangulated categories (Corollary~\ref{app2}).

\begin{defn}\label{Gorenstein}
Let $R$ be a commutative ring and $\T$ an $R$-linear, compactly generated triangulated category with a compact generator $G$. Denote by $E$ the Brown-Comenetz dual of $G$ (with respect to $I$), as in Definition {\rm \ref{def bcdual}}. We say that $\T$ is \emph{Gorenstein} if $\langle G\rangle = \langle E\rangle$, that is, $G$ and $E$ generate the same thick subcategory of $\T$.

\end{defn}

By \cite[Section 3]{CSZ26}, whether a monogenic triangulated category $\T$ is Gorenstein is independent of the choice of its compact generators. We note that the notion of a Gorenstein triangulated category was also introduced in \cite[Definition~4.2(ii)]{KPV25} for compactly generated categories, formulated in terms of orthogonality relations between subcategories. The precise relationship between their definition and ours remains unclear.

In the following, we construct examples of Gorenstein triangulated categories from both differential graded (DG) algebras and schemes. Before this, we recall some standard notation and definitions.

Let $R$ be a \emph{commutative Artin ring} and $D \coloneqq \Hom_R(-,J)$ the standard duality, where $J$ denotes the injective envelope of the direct sum of all simple $R$-modules (up to isomorphism). Let $A$ be a DG $R$-algebra. We denote by $\D{A}$ the derived category of $A$ (see \cite{K94}), which is an $R$-linear, compactly generated triangulated category with the compact generator $A$. Note that every Artin algebra $\Lambda$ can be regarded as a DG algebra concentrated in degree $0$, and $\D{\Lambda}$ coincides with the unbounded derived category $\D{\Lambda\Modcat}$ of $\Lambda$.

Let $X$ be a quasi-compact, quasi-separated scheme with a morphism $f\colon X\to\Spec(R)$. Then the derived functor $\R f_*\colon \mathscr{D}_{\mathrm{qc}}(X) \to \D{R\Modcat}$ admits a left adjoint $\mathbb{L} f^*$ and a right adjoint $f^\times$ (see \cite[Lemmas 20.28.1, 36.3.8 and 48.3.1]{S24}). We denote by $\mathscr{D}_{\mathrm{qc}}(X)$ the full subcategory of the unbounded derived category of $\mathscr{O}_X$-modules consisting of (cochain) complexes of $\mathscr{O}_X$-modules with \emph{quasicoherent} cohomology. By \cite[Theorem 3.1.1(ii)]{BM03}, $\mathscr{D}_{\mathrm{qc}}(X)$ is a compactly generated triangulated category with a single compact generator. Further, we define
$\mathscr{D}^{\mathrm{perf}}(X)$ to be the full subcategory of $\mathscr{D}_{\mathrm{qc}}(X)$ consisting of all perfect complexes, where a complex is said to be \emph{perfect} if it is locally isomorphic to a bounded complex of finite-rank vector bundles.

\begin{lem}\label{BCdual}
$(1)$ Let $A$ be a DG $R$-algebra. Then the Brown-Comenetz dual of $A$ in $\D{A}$ is $D(A)$.

$(2)$ Let $G\in\mathscr{D}^{\mathrm{perf}}(X)$. Then the Brown-Comenetz dual of $G$ in $\mathscr{D}_{\mathrm{qc}}(X)$ is $G\otimesL_{\mathscr{O}_{X}} f^\times(J)$.
\end{lem}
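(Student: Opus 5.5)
The plan is to verify the defining natural isomorphism of Definition~\ref{def bcdual} directly in each case, using the duality $D=\Hom_R(-,J)$ with $I:=J$. Since $J$ is an injective cogenerator of $R\Modcat$ (it contains every simple module), Definition~\ref{def bcdual} applies with this choice.

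For (1), I would work in $\D{A}$ with $D(A):=\Hom_R(A,J)$, viewed as a DG $A$-module through the right $A$-action on $A$. The key step is the adjunction isomorphism of complexes
$$\Hom_A(P,\Hom_R(A,J))\simeq \Hom_R(A\otimes_A P,J)\simeq \Hom_R(P,J),$$
which holds for every DG $A$-module $P$. I would take $P$ to be a K-projective (cofibrant) resolution of a given $M\in\D{A}$. Because $J$ is injective, $\Hom_R(-,J)$ is exact and commutes with taking cohomology. Taking $H^0$ then gives
$$\Hom_{\D{A}}(M,D(A))\simeq H^0\Hom_A(P,D(A))\simeq D\,H^0(P)\simeq D\Hom_{\D{A}}(A,M).$$
The step that needs care is the first isomorphism: computing morphisms into $D(A)$ via $P$ requires either $P$ K-projective (which we arranged) or $D(A)$ K-injective. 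The latter also holds, since $\Hom_A(-,\Hom_R(A,J))\simeq\Hom_R(-,J)$ sends acyclic modules to acyclic complexes. Naturality in $M$ follows from the functoriality of the resolution.

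For (2), I would chain adjunctions. Perfect complexes are rigid, so for $G\in\mathscr{D}^{\mathrm{perf}}(X)$ with dual $G^\vee$ there is an isomorphism $\Hom(M,G\otimesL f^\times J)\simeq \Hom(M\otimesL G^\vee,f^\times J)$. Applying the adjunction $\R f_*\dashv f^\times$ gives
$$\Hom(M\otimesL G^\vee,f^\times J)\simeq \Hom_{\D{R}}(\R f_*(M\otimesL G^\vee),J)\simeq D\,H^0\R f_*(G^\vee\otimesL M),$$
where the last step again uses injectivity of $J$. Finally, $H^0\R f_*(G^\vee\otimesL M)=\Hom_{\mathscr{D}_{\mathrm{qc}}(X)}(\mathscr{O}_X,G^\vee\otimesL M)\simeq\Hom(G,M)$. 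Each isomorphism is natural in $M$, so the composite yields the required natural isomorphism.

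The main obstacle is bookkeeping rather than any substantive step. In (1) one has to confirm that the derived Hom into $D(A)$ really reduces to $D H^0$ without further derived corrections, which is handled by the K-projective/K-injective argument above. In (2) one has to check that $\R f_*$ is the correct functor with $\Hom_{\D{R}}(\R f_*(-),J)$ representing the composite, and that the rigidity isomorphism for perfect $G$ holds in $\mathscr{D}_{\mathrm{qc}}(X)$. Both of these are standard facts.
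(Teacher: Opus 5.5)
Your proposal is correct and follows essentially the same route as the paper. In (1) you use the adjunction $\Hom_A(-,\Hom_R(A,J))\simeq\Hom_R(-,J)$ together with exactness of $D$. Your K-projective/K-injective check just spells out what the paper leaves implicit when it identifies $D(A)$ with $\rHom_R(A,J)$. In (2) you use the same chain of adjunctions $\mathbb{L}f^*\dashv\R f_*\dashv f^\times$ and $G^\vee\otimesL-\dashv G\otimesL-$, only traversed in the opposite direction.
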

\begin{proof}
$(1)$ Let $M\in\mathscr{D}(A)$. Note that there are canonical isomorphisms\vspace{-0.3em}\[\Hom_{\mathscr{D}(A)}(M,D(A))\simeq
\Hom_{\mathscr{D}(A)}(M,\rHom_R(A,J))\simeq\Hom_{\mathscr{D}(R)}(A\otimesL_{A} M,J)\simeq\Hom_{\mathscr{D}(R)}(M,J)\simeq H^0(D(M)).\]
Since $D$ is exact, it commutes with taking cohomology. Then we obtain $H^0(D(M))\simeq D(H^0(M))\simeq D\Hom_{\mathscr{D}(A)}(A,M).$ It follows that $D\Hom_{\mathscr{D}(A)}(A,-)\simeq\Hom_{\mathscr{D}(A)}(-,D(A))$. Thus the Brown-Comenetz dual of $A$ is $D(A)$.

$(2)$ Let $G^{\vee}\coloneqq\rHom_{\mathscr{O}_X}(G,\mathscr{O}_X)$. Then $G^{\vee}\in\mathscr{D}^{\mathrm{perf}}(X)$ and $G\simeq (G^{\vee})^{\vee}$. We first show that, for any $M\in\mathscr{D}_{\mathrm{qc}}(X)$, there exists an isomorphism of $R$-modules:
\[\Hom_{\mathscr{D}_{\mathrm{qc}}(X)}(G,M)\simeq H^0(\R f_*(G^{\vee}\otimesL_{\mathscr{O}_X} M)).\]
In fact, there are canonical isomorphisms
\[
\Hom_{\mathscr{D}_{\mathrm{qc}}(X)}(G,M)\simeq\Hom_{\mathscr{D}_{\mathrm{qc}}(X)}(\mathscr{O}_X\otimesL_{\mathscr{O}_X}G,M)
\simeq\Hom_{\mathscr{D}_{\mathrm{qc}}(X)}(\mathscr{O}_X,\rHom_{\mathscr{O}_X}(G,M)).
\]
Note that $(\mathbb{L} f^*,\R f_*)$ is an adjoint pair with $\mathbb{L} f^*(R)=\mathscr{O}_X$, and that $\rHom_{\mathscr{O}_X}(G,M)\simeq G^{\vee}\otimesL_{\mathscr{O}_X} M$ by \cite[Lemma 20.50.5]{S24}.
Thus
\begin{align*}
\Hom_{\mathscr{D}_{\mathrm{qc}}(X)}(G,M)\simeq\Hom_{\mathscr{D}_{\mathrm{qc}}(X)}(\mathscr{O}_X,\rHom_{\mathscr{O}_X}(G,M))
&= \Hom_{\mathscr{D}_{\mathrm{qc}}(X)}(\mathbb{L} f^*(R),\rHom_{\mathscr{O}_X}(G,M)) \\
&\simeq \Hom_{\D{R\text{-}{\rm Mod}}}(R,\R f_*(\rHom_{\mathscr{O}_X}(G,M))) \\
&\simeq H^0(\R f_*(G^{\vee}\otimesL_{\mathscr{O}_X} M)).
\end{align*}
Clearly, $D$ is exact and commutes with taking cohomology. Then
\[
D\Hom_{\mathscr{D}_{\mathrm{qc}}(X)}(G,M)\simeq  \Hom_R(H^0(\R f_*(G^{\vee}\otimesL_{\mathscr{O}_X} M)),J)\simeq H^0(\Hom_R(\R f_*(G^{\vee}\otimesL_{\mathscr{O}_X} M),J)).
\]
As $J$ is an injective $R$-module, we have
\[
H^0(\Hom_R(\R f_*(G^{\vee}\otimesL_{\mathscr{O}_X} M),J))\simeq\Hom_{\D{R\text{-}{\rm Mod}}}(\R f_*(G^{\vee}\otimesL_{\mathscr{O}_X} M),J).
\]
Since both $(\R f_*,f^\times)$ and $(G^{\vee}\otimesL_{\mathscr{O}_X}-,\rHom_{\mathscr{O}_X}(G^{\vee},-))$ are adjoint pairs, it follows that
\begin{align*}
D\Hom_{\mathscr{D}_{\mathrm{qc}}(X)}(G,M)
&\simeq \Hom_{\D{R\text{-}{\rm Mod}}}(\R f_*(G^{\vee}\otimesL_{\mathscr{O}_X} M),J) \\
&\simeq \Hom_{\mathscr{D}_{\mathrm{qc}}(X)}(G^{\vee}\otimesL_{\mathscr{O}_X} M,f^\times(J))\\
&\simeq\Hom_{\mathscr{D}_{\mathrm{qc}}(X)}(M,\rHom_{\mathscr{O}_X}(G^{\vee},f^\times(J))).
\end{align*} Still by \cite[Lemma 20.50.5]{S24}, there is a natural isomorphism $\rHom_{\mathscr{O}_X}(G^{\vee},-)\simeq G\otimesL_{\mathscr{O}_X}-$. Thus $$D\Hom_{\mathscr{D}_{\mathrm{qc}}(X)}(G,M)\simeq\Hom_{\mathscr{D}_{\mathrm{qc}}(X)}(M,G\otimesL_{\mathscr{O}_X} f^\times(J)).$$
This shows that the Brown-Comenetz dual of $G$ in $\mathscr{D}_{\mathrm{qc}}(X)$ is $G\otimesL_{\mathscr{O}_{X}} f^\times(J)$.
\end{proof}

Now, we are in a position to give examples of Gorenstein triangulated categories.

\begin{exam}\label{Gexam}
(1) Let $A$ be a DG $R$-algebra. Following \cite[Section~0]{J20}, $A$ is called \emph{Gorenstein} if $\langle A \rangle = \langle D(A) \rangle$ in  $\mathscr{D}(A)$. It follows from Lemma \ref{BCdual}(1) that $\D{A}$ is Gorenstein. In particular, if $A$ is a Gorenstein Artin algebra or a $d$-self-injective DG algebra (see \cite[Definition~2.2]{J20}), then $\mathscr{D}(A)$ is Gorenstein.

(2) Suppose that $R$ is a commutative Gorenstein Artin ring and $f:X\to\Spec(R)$ is a proper, \emph{Gorenstein} morphism (that is, $f$ is flat and its fibres are Gorenstein schemes). We show that $\mathscr{D}_{\mathrm{qc}}(X)$ is Gorenstein.

In fact, since $f$ is proper, we see from \cite[Section 48.19]{S24} that the restriction
of the functor $f^\times: \mathscr{D}(R\Modcat)\to \mathscr{D}_{\mathrm{qc}}(X)$ to
$\mathscr{D}^{+}(R\Modcat)$ coincides with another triangle functor $f^!:\mathscr{D}^+(R\Modcat)\to \mathscr{D}^{+}_{\mathrm{qc}}(X)$ (see the discussion below \cite[Situation 48.16.1]{S24}). As $f$ is a proper Gorenstein morphism, the object $f^!(R)\in \mathscr{D}_{\mathrm{qc}}(X)$ is invertible by \cite[Lemma 48.25.10]{S24}. Further, by \cite[Lemma 20.52.2]{S24}, $f^!(R)\in\mathscr{D}^{\mathrm{perf}}(X)$. Thus the functor $-\otimesL_{\mathscr{O}_X}f^!(R)$ induces an autoequivalence of $\mathscr{D}^{\mathrm{perf}}(X)$. Now, let $G$ be a compact generator of $\mathscr{D}_{\mathrm{qc}}(X)$. Then
$\mathscr{D}^{\mathrm{perf}}(X)=\langle G\rangle=\langle G\otimesL_{\mathscr{O}_X}f^!(R)\rangle=\langle G\otimesL_{\mathscr{O}_X}f^\times(R)\rangle.$
Since $R$ is a commutative Gorenstein Artin ring, $R\simeq J$ as $R$-modules. It follows that
$\langle G\rangle=\langle G\otimesL_{\mathscr{O}_X}f^\times(J)\rangle.$ By Lemma \ref{BCdual}(2), $\mathscr{D}_{\mathrm{qc}}(X)$ is Gorenstein.

Note that $\mathscr{D}_{\mathrm{qc}}(X)$ is a weakly approximable triangulated category with a bounded compact generator (see \cite[Summary 5.9]{N18}). Thus Theorem \ref{main1} can be applied to $\mathscr{D}_{\mathrm{qc}}(X)$.

\end{exam}

{\bf Proof of Proposition \ref{FGD}.} (1) Let $G$ be a bounded compact generator of $\T$ and $E$ the Brown-Comenetz dual of $G$. Since $\T$ is Gorenstein, we have $\langle G\rangle = \langle E\rangle$. This implies that $E$ is also a compact generator of $\T$. Note that the finiteness of strong finitistic dimension for $\T$ is independent of the choice of compact generators of $\T$. So, it suffices to show that
there exists an integer $m$ such that
$$\T^{sb}\cap {}^{\bot}(E[1, +\infty))\subseteq\bigcup_{n\le m}\overline{\langle E\rangle}^{[n,\,m]}.\vspace{-0.8em}$$
Let $X\in\T^{sb}\cap {}^{\bot}(E[1, +\infty))$. Since $D\Hom_{\T}(G[i],X)\simeq\Hom_{\T}(X,E[i])$ for $i\in\mathbb{Z}$, we have $X\in G[1,+\infty)^{\perp}$. As $\T$ is weakly approximable, we can fix an integer $A>0$ in Definition~\ref{defn-app}. It follows from \cite[Lemma~3.9(iv)]{BNP23} that $G[-2A,+\infty)^{\perp}\subseteq \mathcal{T}^{\leq -A-1}$. By $X[2A+1]\in G[-2A,+\infty)^{\perp}$, we have   $X[A]\in\T^{\leq 0}$. Further, by Lemma \ref{lem-app-induction}, for each $i>0$, there exists a triangle $C_i\to X[A]\to D_i\to C_i[1]$ in $\T$ with $C_i\in\overline{\langle G\rangle}^{[1-i-A,\,A]}$ and $D_i\in\T^{\leq -i}$. Since $\Tsb=\bigcup_{m\in\mathbb{Z}}\big(\mathcal{T}^-\cap {}^{\perp}(\mathcal{T}^{\le m})\big)$ by \cite[Proposition 3.4]{CCZ26}, it follows from
$X\in\T^{sb}$ that there is an integer $b>0$ such that $X[A]\in{}^{\perp}(\T^{\leq -b})$. This implies $\Hom_\T(X[A], D_b)=0$, and therefore $X[A]$ is a direct summand of $C_b$. Since $C_b\in\overline{\langle G\rangle}^{[1-b-A,\,A]}$, we have $X\in\overline{\langle G\rangle}^{[1-b,\,2A]}$. Note that $G\in\langle E\rangle^{[-k,\,k]}$ for some $k>0$, due to $\langle G\rangle = \langle E\rangle$. It follows that  $\overline{\langle G\rangle}^{[1-b,\,2A]}\subseteq\overline{\langle E\rangle}^{[1-b-k,\,2A+k]}$. Clearly, neither $A$ nor $k$ depends on $X$. Now, let $m:=2A+k$. Then $\T^{sb}\cap{}^{\perp}E[1,+\infty)\subseteq\bigcup_{n\leq m}\overline{\langle E\rangle}^{[n,\,m]}$.
This shows that $\T$ has finite strong finitistic dimension.

(2) We denote by $(\mathcal{T}^c)^{\mathrm{op}}$ the opposite category of $\mathcal{T}^c$, and write $\Sigma$ for the shift functor of $(\mathcal{T}^c)^{\mathrm{op}}$. Let $X^{{\rm op}} \in(\mathcal{T}^c)^{\rm op}$ satisfy $\Hom_{(\mathcal{T}^c)^{\mathrm{op}}}(G^{{\rm op}}, \Sigma^n X^{\rm op})=0$ for all $n \le -1$. It follows that $X\in \mathcal{T}^c\cap {}^{\perp}G[1,+\infty)$. Since $\T$ has finite strong finitistic dimension, there exists an integer $t$ such that $\T^{sb}\cap {}^{\bot}(G[1, +\infty))\subseteq\bigcup_{s\le t}\overline{\langle G\rangle}^{[s,\,t]}$. Combining this with $\T^c\subseteq \T^{sb}$, we have
$$X\in\T^c\cap\big(\bigcup_{s\leq t}\overline{\langle G\rangle}^{[s,\,t]}\big)=\bigcup_{s\leq t}\big(\T^c\cap\overline{\langle G\rangle}^{[s,\,t]}\big).$$
By \cite[Lemma 1.8(ii)]{N21}, $\T^c\cap\overline{\langle G\rangle}^{[s,\,t]}\subseteq\langle G\rangle^{[s,\,t]}$ for all $s\leq t$. This implies that $X\in\bigcup_{s\leq t}\langle G\rangle^{[s,\,t]}$; equivalently,
$X^{{\rm op}}\in\bigcup_{s\leq t}\langle G^{{\rm op}}\rangle^{[-t,\,-s]}$.
Thus $(\mathcal{T}^c)^{\mathrm{op}}$ has finite finitistic dimension.
\hfill$\square$

\vspace{1em}

To show Corollary \ref{mainapp}, we establish the following result.

\begin{lem}\label{lasttech}
Let $\T$ be a weakly approximable triangulated category with a bounded compact generator. If $\Tsb=\T^b$, then $\T^c=\T_c^b$. The converse holds if either $\T = \mathscr{D}_{\mathrm{qc}}(X)$ for a finite-dimensional noetherian scheme $X$ or $\T=\D{\Lambda\Modcat}$ for an Artin algebra $\Lambda$.
\end{lem}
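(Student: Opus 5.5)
For the forward implication, assume $\Tsb=\T^b$. Since $\T^c\subseteq\T^b_c$ always holds (compact objects are bounded because $G$ is bounded, and they trivially lie in $\T^c\ast\T^{\leq -n}$), we only need $\T^b_c\subseteq\T^c$. Take $F\in\T^b_c$. By assumption $F\in\Tsb$, so by \cite[Proposition 3.4]{CCZ26} there is $b>0$ with $F\in{}^{\perp}(\T^{\leq -b})$. Since $F\in\T^c\ast\T^{\leq -b}$, there is a triangle $C\to F\to D\to C[1]$ with $C\in\T^c$ and $D\in\T^{\leq -b}$. Then $F\to D$ vanishes, so $F$ is a direct summand of $C$, and hence $F\in\T^c$ because $\T^c$ is thick. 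This direction is therefore formal.

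For the converse we treat the two cases separately and use the characterizations recalled after Theorem~\ref{main1}: $\Tsb=\T^b$ holds if and only if $\T$ has finite global dimension \cite[Lemma 5.2(2)]{CCZ26}.

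\emph{Case $\T=\D{\Lambda\Modcat}$.} Here $\T^c=\Kb{\prj{\Lambda}}$ and $\T^b_c=\Db{\Lambda\modcat}$. If these agree, every finitely generated module, in particular each simple module, has finite projective dimension. Since $\Lambda$ has finitely many simples, $\gd\Lambda=\max\pd(S)<\infty$. Then $\Kb{\Pmodcat{\Lambda}}=\Db{\Lambda\Modcat}$, that is, $\Tsb=\T^b$; equivalently, finite global dimension of $\T$ follows from \cite[Remark 5.3]{CCZ26}.

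\emph{Case $\T=\mathscr{D}_{\mathrm{qc}}(X)$, with $X$ finite-dimensional noetherian.} We have $\T^c=\mathscr{D}^{\mathrm{perf}}(X)$, and by Neeman's identification (\cite{N18}, Summary 5.9 and its consequences) $\T^b_c=\mathscr{D}^b_{\mathrm{coh}}(X)$. If $\mathscr{D}^{\mathrm{perf}}(X)=\mathscr{D}^b_{\mathrm{coh}}(X)$, then every coherent sheaf is perfect. Localizing, every finitely generated module over each local ring $\mathscr{O}_{X,x}$ has finite projective dimension, so $X$ is regular; finite-dimensionality then bounds the global dimensions uniformly by $\dim X$. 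It remains to get $\Tsb=\T^b$, i.e. $\Hom(\T^{\geq 0}\cap\T^b,\T^{\leq -(d+1)}\cap\T^b)=0$ for some $d$. Choose the standard $t$-structure, which lies in the preferred class for noetherian separated-enough $X$ (a shift relates them), and reduce to modules. For quasi-coherent sheaves $\mathcal{F},\mathcal{G}$ on a regular scheme of dimension $d$, the groups $\Ext^i(\mathcal{F},\mathcal{G})$ vanish for $i>d+c$. Here $c$ is a constant bounding the number of affines in a cover; the bound comes from a Čech/local-to-global spectral sequence combining the local Ext bound ($\leq d$, valid also for non-finitely generated modules since global dimension of a noetherian ring of finite Krull dimension that is regular equals the dimension) with cohomological dimension of quasi-coherent sheaves ($\leq c$). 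Extending from modules to bounded complexes is a dévissage on truncations.

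I expect the main obstacle to be this last uniform Ext-vanishing step for quasi-coherent (not merely coherent) sheaves and its compatibility with the preferred $t$-structure. If cleaner, I would cite the known result that $\mathscr{D}_{\mathrm{qc}}(X)$ for regular finite-dimensional $X$ satisfies $\Tsb=\T^b$, as in \cite{CCZ26}.
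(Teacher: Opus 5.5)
Your forward direction and the Artin-algebra case coincide with the paper's proof. The paper phrases the forward step as the general identity $\T^c=\Tsb\cap\T^b_c$, proved by the same direct-summand argument via \cite[Proposition 3.4]{CCZ26}.

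For $\T=\mathscr{D}_{\mathrm{qc}}(X)$ you take a genuinely different route. The paper uses $\mathscr{D}^{\mathrm{perf}}(X)=\mathscr{D}^b_{\mathrm{coh}}(X)$ only to see that each $R_i$ in a finite affine cover is regular, hence of finite global dimension because $\dim X<\infty$. It then invokes Neeman's theorem \cite[Theorem 2.1]{N21} that $\mathscr{D}_{\mathrm{qc}}(X)$ is strongly generated, and concludes $\Tsb=\T^b$ from \cite[Theorem 5.9 and Lemma 5.2(2)]{CCZ26}. Strong generation is the black box there, and no Ext computation appears.

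You instead verify the finite-global-dimension criterion of \cite[Lemma 5.2(2)]{CCZ26} directly: standard $t$-structure, a uniform Ext bound between quasi-coherent sheaves, then d\'evissage. This is more self-contained and gives an explicit $d$, but the Ext bound needs more care than you indicate. As written, the local-to-global spectral sequence cannot be combined with the cohomological dimension of \emph{quasi-coherent} sheaves. The reason is that $\mathcal{E}xt^q(\mathcal{F},\mathcal{G})$ need not be quasi-coherent when $\mathcal{F}$ is not coherent. There are two repairs:
\begin{itemize}
\item Use Grothendieck vanishing on the noetherian space $X$, which holds for all abelian sheaves. Together with $\mathcal{E}xt^q=0$ for $q>\dim X$ (checked on affine opens), this gives $\Ext^i=0$ for $i>2\dim X$ with no separatedness hypothesis. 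That matters, since the lemma allows non-separated $X$.
\item For separated $X$, resolve $\mathcal{G}$ by its \v{C}ech complex built from the $(j_I)_*(\mathcal{G}|_{U_I})$. Then use $\Ext^i_X(\mathcal{F},(j_I)_*\mathcal{G}')\simeq\Ext^i_{U_I}(\mathcal{F}|_{U_I},\mathcal{G}')$ for the affine morphisms $j_I\colon U_I\to X$.
\end{itemize}

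Finally, the standard $t$-structure is \emph{equivalent} to $(\T_G^{\leq 0},\T_G^{\geq 1})$, not a shift of it. This is harmless, since passing to an equivalent $t$-structure only changes the constant $d$ in the global-dimension condition.
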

\begin{proof}
We first show $\T^c = \Tsb \cap \T_c^b$.

For this aim, let $G$ be a bounded compact generator of $\T$. Since $\T^c = \langle G \rangle$
and $G$ is bounded,  it follows that $\T^c \subseteq \Tsb \cap \T_c^b$. To show the converse, let $X\in\Tsb\cap\T_{c}^b$. Since $\T$ is weakly approximable, there exists a positive integer $m$ with $X\in{}^{\perp}(\T_{G}^{\leq -m})$ by \cite[Proposition 3.4]{CCZ26}. Further, by the definition of $\T_{c}^{b}$, there is a triangle $C_{m}\to X\to D_{m}\to C_{m}[1]$ with $C_{m}\in\T^{c}$ and $D_{m}\in\T_{G}^{\leq -m}$. Then $\Hom_\T(X, D_m)=0$, and therefore $X$ is a direct summand of $C_m$. This shows $X\in\T^{c}$. Thus $\T^c = \Tsb \cap \T_c^b$. If $\Tsb=\T^b$, then $\T^c = \T^b \cap \T_c^b=\T_c^b$.

If $\T=\D{\Lambda\Modcat}$ for an Artin algebra $\Lambda$ and $\T^c=\T_c^b$ (i.e. $\mathscr{K}^b(\prj{\Lambda})=\mathscr{D}^b(\Lambda\modcat))$, then $\Lambda$ has finite global dimension. In this case, $\mathscr{K}^b(\Lambda\text{-}{\rm Proj})=\mathscr{D}^b(\Lambda\Modcat)$, namely, $\Tsb=\T^b$.

Now, we consider $\T = \mathscr{D}_{\mathrm{qc}}(X)$ for a finite-dimensional noetherian scheme $X$. Then $X$ can be covered by affine open subsets, say $X=\bigcup_{1\leq i\leq n\in\mathbb{N}}\Spec(R_i)$, where each $R_i$ is a commutative noetherian ring of finite Krull dimension. Assume $\T^c=\T_c^b$.
By \cite[Remark 5.8]{N18}, $\T^c=\mathscr{D}^{\mathrm{perf}}(X)$ and $\T_c^b=\mathscr{D}_{\mathrm{coh}}^b(X)$. This forces $\mathscr{D}^{\mathrm{perf}}(X)=\mathscr{D}_{\mathrm{coh}}^b(X)$, and therefore  $\mathscr{D}^{\mathrm{perf}}(\Spec(R_i))=\mathscr{D}_{\mathrm{coh}}^b(\Spec(R_i))$ for all $i$. Clearly, $\mathscr{D}^{\mathrm{perf}}(\Spec(R_i))=\Kb{\prj{R_i}}$ and $\mathscr{D}_{\mathrm{coh}}^b(\Spec(R_i))=\Db{R_i\modcat}$. Thus $\Kb{\prj{R_i}}=\Db{R_i\modcat}$; in other words, $R_i$ is regular. Since $\dim(R_i)\leq\dim(X)<+\infty$, each $R_i$ has finite global dimension. Now, by \cite[Theorem 2.1]{N21}, $\mathscr{D}_{qc}(X)$ is strongly compactly generated. Further, by \cite[Theorem 5.9 and Lemma 5.2(2)]{CCZ26}, we have $\Tsb=\T^b$.
\end{proof}

{\bf Proof of Corollary \ref{mainapp}.} $(1)$ By Proposition \ref{FGD}(1) and Example \ref{Gexam}(1), the category $\D{\Lambda\Modcat}$ for a Gorenstein Artin algebra $\Lambda$ has finite strong finitistic dimension. Note that $\mathscr{K}^b(\Lambda\text{-}{\rm Proj})=\mathscr{D}^b(\Lambda\Modcat)$ if and only if $\Lambda$ has finite global dimension. Thus $(1)$ holds by applying Theorem \ref{main1}(2) to $\D{\Lambda\Modcat}$.

(2) By the last paragraph of the proof of Lemma \ref{lasttech}, a finite-dimensional noetherian scheme $X$ is regular if and only if $\mathscr{D}_{\mathrm{qc}}(X)^{sb}=\mathscr{D}_{\mathrm{qc}}(X)^b$.
Moreover, by Proposition \ref{FGD}(1) and Example \ref{Gexam}(2), the assumptions on $X$ in $(2)$ imply that the category $\mathscr{D}_{\mathrm{qc}}(X)$ has finite strong finitistic dimension. Now, $(2)$ holds by applying Theorem \ref{main1}(2) to $\mathscr{D}_{\mathrm{qc}}(X)$. \hfill$\square$

\medskip

In what follows, we explain how to use recollement to construct triangulated categories with finite strong finitistic dimension. The following lemma is useful.

\begin{lem}\label{lem-adjoint}
Let $\mathcal{S}$ and $\mathcal{T}$ be triangulated categories with compact generators
$G_{\mathcal{S}}$ and $G_{\mathcal{T}}$, respectively, and let $\mathbf{F} \colon \mathcal{S} \to \mathcal{T}$ be a triangle functor with a left adjoint $\mathbf{E} \colon \mathcal{T} \to \mathcal{S}$. Suppose that $\mathbf{F}$ preserves coproducts and compact objects, and $\mathbf{E}$ can be restricted to a functor $\Tsb\to\mathcal{S}^{sb}$. If $\mathcal{S}$ has finite strong finitistic dimension, then there exists an integer $m$ such that
$$
\mathbf{F}\mathbf{E}\big(\Tsb\cap{}^{\perp}(G_{\T}[1,+\infty))\big)\subseteq \bigcup_{n\le m}\overline{\langle G_{\mathcal{T}}\rangle}^{[n,\,m]}.
$$
If, in addition, $\mathbf{E}$ is fully faithful, then $\mathcal{T}$ has finite strong finitistic dimension.
\end{lem}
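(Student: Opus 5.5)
We need to show that for $X\in\Tsb\cap{}^{\perp}(G_{\T}[1,+\infty))$, the object $\mathbf{E}X$ lands in a fixed range of $\overline{\langle G_{\mathcal S}\rangle}$, and then apply $\mathbf{F}$.

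The plan has four steps.

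\emph{Step 1: comparing the generators.} Since $\mathbf{F}$ preserves compact objects, $\mathbf{F}(G_{\mathcal S})\in\T^c=\langle G_\T\rangle$, so $\mathbf{F}(G_{\mathcal S})\in\langle G_\T\rangle^{[-a,\,a]}$ for some fixed $a\ge 0$. Symmetrically, $\mathbf{E}$ preserves compacts (because its right adjoint $\mathbf{F}$ preserves coproducts), so $\mathbf{E}(G_\T)\in\langle G_{\mathcal S}\rangle^{[-c,\,c]}$ for some $c$. I expect not to need the second fact for the first claim, but it may be useful for the orthogonality step.

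\emph{Step 2: orthogonality transfers to $\mathbf{E}X$.} Take $X\in\Tsb$ with $\Hom_\T(X,G_\T[i])=0$ for all $i\ge 1$. By adjunction,
\[
\Hom_{\mathcal S}(\mathbf{E}X,G_{\mathcal S}[i])\simeq\Hom_\T(X,\mathbf{F}G_{\mathcal S}[i]).
\]
Since $\mathbf{F}G_{\mathcal S}[i]\in\langle G_\T\rangle^{[-a-i,\,a-i]}$, it is built from $G_\T[j]$ with $j\ge i-a$. Hence this Hom vanishes for $i\ge a+1$, by induction on extensions and summands. Therefore $\mathbf{E}X[-a]\in\mathcal S^{sb}\cap{}^{\perp}(G_{\mathcal S}[1,+\infty))$; here we use the hypothesis that $\mathbf{E}$ restricts to $\Tsb\to\mathcal S^{sb}$.

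\emph{Step 3: apply the hypothesis on $\mathcal S$.} Finite strong finitistic dimension of $\mathcal S$ gives an integer $m_{\mathcal S}$ with $\mathbf{E}X[-a]\in\overline{\langle G_{\mathcal S}\rangle}^{[n,\,m_{\mathcal S}]}$ for some $n$. So $\mathbf{E}X\in\overline{\langle G_{\mathcal S}\rangle}^{[n-a,\,m_{\mathcal S}-a]}$.

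\emph{Step 4: apply $\mathbf{F}$.} Since $\mathbf{F}$ is triangulated and preserves coproducts (and summands), $\mathbf{F}$ maps $\overline{\langle G_{\mathcal S}\rangle}^{[p,\,q]}$ into $\overline{\langle \mathbf{F}G_{\mathcal S}\rangle}^{[p,\,q]}\subseteq\overline{\langle G_\T\rangle}^{[p-a,\,q+a]}$. This follows by the standard argument that the preimage of a subcategory closed under coproducts, extensions and summands is again such a subcategory containing the generators. Hence $\mathbf{F}\mathbf{E}X\in\overline{\langle G_\T\rangle}^{[n-2a,\,m_{\mathcal S}]}$, and $m:=m_{\mathcal S}$ works, up to bookkeeping of the shift conventions.

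\emph{The fully faithful case.} If $\mathbf{E}$ is fully faithful, the unit $X\to\mathbf{F}\mathbf{E}X$ is an isomorphism. The displayed inclusion then reads $\Tsb\cap{}^{\perp}(G_\T[1,+\infty))\subseteq\bigcup_{n\le m}\overline{\langle G_\T\rangle}^{[n,\,m]}$, which is exactly Definition~\ref{defn-fd2} for $\T$.

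The main obstacle is the careful sign and shift bookkeeping. This covers which $G[j]$ occur in $\langle G\rangle^{[-a,a]}$ under the convention $G[a,b]=\{G[-i]\}$, and confirming that the bounds are uniform in $X$. The closure argument in Step 4 is routine.
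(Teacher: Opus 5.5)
Your argument is correct and is essentially the paper's proof. As there, $\mathbf{F}(G_{\mathcal S})\in\langle G_\T\rangle^{[a,b]}$ together with the adjunction puts a fixed shift of $\mathbf{E}X$ into $\mathcal S^{sb}\cap{}^{\perp}(G_{\mathcal S}[1,+\infty))$; the hypothesis on $\mathcal S$ then bounds that shift; the coproduct-preserving $\mathbf{F}$ carries the bound back into $\overline{\langle G_\T\rangle}$; and the unit is an isomorphism when $\mathbf{E}$ is fully faithful.

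One bookkeeping fix, of the kind you anticipated. With the paper's convention $G[1,+\infty)=\{G[-i]\mid i\ge 1\}$, the hypothesis reads $\Hom_\T(X,G_\T[i])=0$ for $i\le -1$, not $i\ge 1$. Step 2 then yields $\mathbf{E}X[a]$, not $\mathbf{E}X[-a]$, in the orthogonal, and the uniform bound becomes $m=m_{\mathcal S}+2a$ rather than $m_{\mathcal S}$. This does not change the structure of the argument.
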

\begin{proof}
Since $\mathbf{F}$ preserves compact objects, $\mathbf{F}(G_{\mathcal{S}})\in\langle G_{\T}\rangle^{[a,\,b]}$ for some integers $a\leq b$. Then $\mathbf{F}(Y)\in\bigcup_{1\leq r}\langle G_{\T}\rangle^{[a+r,\,b+r]}$ for each $Y\in G_{\mathcal{S}}[1,+\infty)$.
Let $X\in\Tsb\cap{}^{\perp}(G_{\T}[1,+\infty))$. Then $\Hom_{\mathcal{T}}(X,\mathbf{F}(Y)[a])=0$.
Since $(\mathbf{E}, \mathbf{F})$ is an adjoint pair, we have  $$\Hom_{\mathcal{S}}(\mathbf{E}(X)[-a],Y)\simeq \Hom_{\mathcal{T}}(X[-a],\mathbf{F}(Y))\simeq \Hom_{\mathcal{T}}(X,\mathbf{F}(Y)[a])=0.$$
This implies $\mathbf{E}(X)[-a]\in{}^{\perp}(G_{\mathcal{S}}[1,+\infty))$.
Since $\mathbf{E}$ can be restricted to a functor $\Tsb\to\mathcal{S}^{sb}$, we have $\mathbf{E}(X)[-a]\in \mathcal{S}^{sb}$. As $\mathcal{S}$ has finite strong finitistic dimension, there exists an integer $t$ such that $\mathcal{S}^{sb}\cap{}^{\perp}(G_{\mathcal{S}}[1,+\infty))\subseteq \bigcup_{s\le t}\overline{\langle G_{\mathcal{S}}\rangle}^{[s,\,t]}$. Then $\mathbf{E}(X)[-a]\in \bigcup_{s\le t}\overline{\langle G_{\mathcal{S}}\rangle}^{[s,\,t]}$. Since $\mathbf{F}$ preserves coproducts, it follows that
$$\mathbf{F}\mathbf{E}(X)\in \bigcup_{s\le t}\overline{\langle \mathbf{F}(G_{\mathcal{S}}) \rangle}^{[s,\,t]}[a]\subseteq \bigcup_{s\le t}\overline{\langle G_{\mathcal{T}} \rangle}^{[s+a,\,t+b]}[a]\subseteq\bigcup_{n\le t+b-a}\overline{\langle G_{\mathcal{T}} \rangle}^{[n,\,t+b-a]}.$$
Then $\mathbf{F}\mathbf{E}(X)\in\bigcup_{n\le m}\overline{\langle G_{\mathcal{T}} \rangle}^{[n,\,m]}$ with $m:=t+b-a$. Thus the first assertion of Lemma \ref{lem-adjoint} holds. Suppose that $\mathbf{E}$ is fully faithful. Then $X\simeq \mathbf{F}\mathbf{E}(X)\in \bigcup_{n\le m}\overline{\langle G_{\mathcal{T}} \rangle}^{[n,\, m]}$. Thus $\mathcal{T}$ has finite strong finitistic dimension.
\end{proof}

\begin{prop}\label{construct1}
Let the following be a recollement of weakly approximable triangulated categories:
\begin{align*}
\xymatrixcolsep{4pc}\xymatrix{\mathcal{R} \ar[r]|{i_*=i_!} &\mathcal{S} \ar@<-2ex>[l]|{i^*} \ar@<2ex>[l]|{i^!} \ar[r]|{j^!=j^*}  &\mathcal{T}. \ar@<-2ex>[l]|{j_!} \ar@<2ex>[l]|{j_{*}}
}
\end{align*}
Suppose that $\mathcal{S}$ admits a bounded compact generator and $i_*$ preserves compact objects. If $\mathcal{R}$ and $\mathcal{T}$ have finite strong finitistic dimension, then so does $\mathcal{S}$.
\end{prop}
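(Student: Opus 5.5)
We plan to take $X\in\mathcal{S}^{sb}\cap{}^{\perp}(G_{\mathcal{S}}[1,+\infty))$, decompose it with the canonical recollement triangle
$$j_!j^!X\lra X\lra i_*i^*X\lra j_!j^!X[1],$$
and bound each outer term by Lemma~\ref{lem-adjoint}. Then we close up under extensions. Throughout, $G_{\mathcal{S}}$ is a bounded compact generator of $\mathcal{S}$. Compact generators $G_{\mathcal{R}}$ and $G_{\mathcal{T}}$ exist because $\mathcal{R}$ and $\mathcal{T}$ are weakly approximable.

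\emph{The term $i_*i^*X$.} We apply Lemma~\ref{lem-adjoint} with $\mathbf{F}=i_*\colon\mathcal{R}\to\mathcal{S}$ and $\mathbf{E}=i^*$.
\begin{itemize}
\item $i_*$ preserves coproducts, since it has the right adjoint $i^!$.
\item $i_*$ preserves compact objects by hypothesis.
\item We must check that $i^*$ sends $\mathcal{S}^{sb}$ into $\mathcal{R}^{sb}$. Since $i^*$ preserves coproducts (it is a left adjoint) and compact objects (its right adjoint $i_*$ preserves coproducts), we get $i^*(G_{\mathcal{S}})\in\langle G_{\mathcal{R}}\rangle^{[a,b]}$ for some $a\le b$. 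Hence $i^*\overline{\langle G_{\mathcal{S}}\rangle}^{[-n,n]}\subseteq\overline{\langle G_{\mathcal{R}}\rangle}^{[a-n,b+n]}$.
\end{itemize}
The first assertion of Lemma~\ref{lem-adjoint} then gives $i_*i^*X\in\bigcup_{n\le m_1}\overline{\langle G_{\mathcal{S}}\rangle}^{[n,m_1]}$ for a uniform $m_1$.

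\emph{The term $j_!j^!X$.} We apply Lemma~\ref{lem-adjoint} with $\mathbf{F}=j_!\colon\mathcal{T}\to\mathcal{S}$ and $\mathbf{E}=j^!$. This is the step we expect to be the main obstacle.
\begin{itemize}
\item $j_!$ preserves coproducts (it is a left adjoint).
\item $j_!$ preserves compact objects, because its right adjoint $j^!$ preserves coproducts (it has the right adjoint $j_*$).
\item $j^!$ sends $\mathcal{S}^{sb}$ into $\mathcal{T}^{sb}$ by the same argument as for $i^*$. Here $j^!$ preserves compacts because its right adjoint $j_*$ preserves coproducts. This last fact holds since $i^!$ preserves coproducts, which follows from $i_*$ preserving compacts, and $\mathcal{S}$ is compactly generated.
\end{itemize}
The subtlety is that Lemma~\ref{lem-adjoint} requires its input to lie in $\mathcal{S}^{sb}\cap{}^{\perp}(G_{\mathcal{S}}[1,+\infty))$, and this is satisfied since that is exactly where $X$ lives. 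It then yields $j_!j^!X\in\bigcup_{n\le m_2}\overline{\langle G_{\mathcal{S}}\rangle}^{[n,m_2]}$.

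\emph{Gluing.} If instead we needed to apply one of these steps to $i_*i^*X$ (which need not be left orthogonal to $G_{\mathcal{S}}[1,\infty)$), we would use the triangle to control orthogonality. Since each of $\Hom(X,-)$ and $\Hom(j_!j^!X,-)$ vanishes on $G_{\mathcal{S}}[N,\infty)$ for large $N$, the same holds for the third term, and a uniform shift absorbs this. As it stands, both outer terms are controlled directly. Let $m=\max\{m_1,m_2\}$ and $n=\min$ of the two lower indices. Since $\overline{\langle G_{\mathcal{S}}\rangle}^{[n,m]}$ is closed under extensions and contains both $j_!j^!X$ and $i_*i^*X$, the triangle gives $X\in\overline{\langle G_{\mathcal{S}}\rangle}^{[n,m]}$ with $m$ independent of $X$. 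By Definition~\ref{defn-fd2}, $\mathcal{S}$ has finite strong finitistic dimension.
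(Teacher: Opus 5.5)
There is a genuine gap in your treatment of $j_!j^!X$. Your handling of $i_*i^*X$ is correct and matches the paper. Lemma~\ref{lem-adjoint}, however, requires $\mathbf{E}$ to be a \emph{left} adjoint of $\mathbf{F}$: its proof rests on $\Hom(\mathbf{E}(X)[-a],Y)\simeq\Hom(X[-a],\mathbf{F}(Y))$. In a recollement the adjunctions are $i^*\dashv i_*\dashv i^!$ and $j_!\dashv j^!\dashv j_*$. So $j^!$ is the \emph{right} adjoint of $j_!$, and the lemma does not apply to $(\mathbf{E},\mathbf{F})=(j^!,j_!)$. The pair $(j^*,j_*)$ has the right direction, but it would control $j_*j^*X$, which lives in the other triangle $i_*i^!X\to X\to j_*j^*X\to i_*i^!X[1]$, not in yours.

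What you actually have is $\Hom_{\mathcal{T}}(j^!X,Y)\simeq\Hom_{\mathcal{S}}(j_!j^!X,j_!Y)$, by full faithfulness of $j_!$. So to put a shift of $j^!X$ into ${}^{\perp}(G_{\mathcal{T}}[1,+\infty))$, you need orthogonality of $j_!j^!X$, not of $X$, against some uniform $G_{\mathcal{S}}[N,+\infty)$. Your ``gluing'' remark has the roles reversed. It is $j_!j^!X$ whose orthogonality must be obtained through the triangle. The vanishing of $\Hom(j_!j^!X,-)$ on $G_{\mathcal{S}}[N,+\infty)$, which you take for granted there, is exactly what needs proof.

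The missing ingredient is the boundedness of $G_{\mathcal{S}}$, which your argument never uses. Here is how the paper proceeds.

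First, Lemma~\ref{lem-adjoint} gives $i_*i^*X\in\bigcup_{n\le r}\overline{\langle G_{\mathcal{S}}\rangle}^{[n,\,r]}$ with $r$ uniform. Choose $d$ with $\Hom_{\mathcal{S}}(G_{\mathcal{S}}[i],G_{\mathcal{S}})=0$ for $i\ge d$, and set $a=r+d$. The left orthogonal ${}^{\perp}(G_{\mathcal{S}}[a,+\infty))$ contains $G_{\mathcal{S}}(-\infty,r]$ and is closed under coproducts, summands and extensions. Hence $i_*i^*X\in{}^{\perp}(G_{\mathcal{S}}[a,+\infty))$.

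Next, the triangle $j_!j^!X\to X\to i_*i^*X\to j_!j^!X[1]$ gives $j_!j^!X\in{}^{\perp}(G_{\mathcal{S}}[a+1,+\infty))$. Write $j_!(G_{\mathcal{T}})\in\langle G_{\mathcal{S}}\rangle^{[b,\,c]}$. Full faithfulness of $j_!$ then yields $j^!X[a-b]\in\mathcal{T}^{sb}\cap{}^{\perp}(G_{\mathcal{T}}[1,+\infty))$.

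Only at this point does the finite strong finitistic dimension of $\mathcal{T}$ apply. Pushing forward along the coproduct-preserving functor $j_!$ places $j_!j^!X$ in $\bigcup_{n\le t+a-b+c}\overline{\langle G_{\mathcal{S}}\rangle}^{[n,\,t+a-b+c]}$, with a uniform upper index. With this repair, your remaining steps go through: restricting $i^*$ and $j^!$ to strongly bounded objects, and closing up under extensions.
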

\begin{proof}
Let $G_{\mathcal{R}}$, $G_{\mathcal{S}}$ and $G_{\mathcal{T}}$ be compact generators of $\mathcal{R}$, $\mathcal{S}$ and $\mathcal{T}$, respectively. Then $G_{\mathcal{S}}$ is bounded by assumption.
Since $i_*(G_{\mathcal{R}})\in\mathcal{S}^c\subseteq \mathcal{S}^{sb}$, it follows from \cite[Theorem 3.7]{CCZ26} that the recollement induces a left recollement
\begin{align*}
\xymatrixcolsep{4pc}\xymatrix{
\mathcal{R}^{sb} \ar[r]|{i_*=i_!}
&\mathcal{S}^{sb} \ar@<-2ex>[l]|{i^*}   \ar[r]|{j^!=j^*}
&\mathcal{T}^{sb}. \ar@<-2ex>[l]|{j_!}
}
\end{align*}
Let $X\in\mathcal{S}^{sb}\cap{}^{\perp} (G_{\mathcal{S}}[1,+\infty)) $. Then there is a canonical triangle in $\mathcal{S}$: \[(\dag)\quad i_*i^*(X)[-1]\ra j_!j^!(X)\ra X\ra i_*i^*(X).\]
Applying Lemma \ref{lem-adjoint} to the adjoint pair $(i^*,i_*)$, we obtain $i_*i^*(X) \in \bigcup_{n\le r}\overline{\langle G_{\mathcal{S}}\rangle}^{[n,\,r]}\subseteq \mathcal{S}^{sb}$ for some $r\geq 0$.
Since $G_{\mathcal{S}}$ is bounded, there exists an integer $d\geq 0$ such that $\Hom_{\mathcal{S}}(G_{\mathcal{S}}[i],G_{{\mathcal{S}}})=0$ for any $i\geq d$.
Let $a\coloneqq r+d$. Then $G_{\mathcal{S}}(-\infty,r]\subseteq{}^{\perp}(G_{\mathcal{S}}[a,+\infty))$. Note that ${}^{\perp}(G_{\mathcal{S}}[a,+\infty))\subseteq \mathcal{S}$ is closed under direct summands, coproducts and extensions. Thus $$i_*i^*(X) \in \bigcup_{n\le r}\overline{\langle G_{\mathcal{S}}\rangle}^{[n,\,r]}\subseteq\mathcal{S}^{sb}\cap{}^{\perp}(G_{\mathcal{S}}[a,+\infty)).$$
By this and $X\in\mathcal{S}^{sb}\cap{}^{\perp} (G_{\mathcal{S}}[1,+\infty)) $, we see from the triangle ($\dag$) that $j_!j^!(X)\in\mathcal{S}^{sb}\cap{}^{\perp}(G_{\mathcal{S}}[a+1,+\infty))$.

By \cite[Theorem 5.1]{N96}, $j_!$ preserves compact objects. So, $j_!(G_{\T})\in\langle G_{\mathcal{S}}\rangle^{[b,\,c]}$ for some integers $b\leq c$. It follows that $j_!(Y)[b-a]\in\langle G_{\mathcal{S}}\rangle^{[a+1,\,+\infty)}$ for any $Y\in G_{\T}[1,+\infty)$. Since $j_!$ is fully faithful, we have
$$\Hom_{\mathcal{T}}(j^!(X)[a-b],Y)\simeq \Hom_{\mathcal{S}}(j_!j^!(X)[a-b],j_!(Y))\simeq \Hom_{\mathcal{S}}(j_!j^!(X),j_!(Y)[b-a])=0.$$
Thus $j^!(X)[a-b]\in\Tsb\cap{}^{\perp}(G_{\T}[1,+\infty))$. Further, there exists an integer $t$ such that $\Tsb\cap{}^{\perp}(G_{\T}[1,+\infty))\subseteq \bigcup_{n\le t}\overline{\langle G_{\mathcal{T}}\rangle}^{[n,\,t]}$ because $\mathcal{T}$ has finite strong finitistic dimension.  Thus $j^!(X)[a-b]\in \bigcup_{n\le t}\overline{\langle G_{\mathcal{T}}\rangle}^{[n,\,t]}$.
Since $j_!(G_{\mathcal{T}})\in \langle G_{\mathcal{S}}\rangle^{[b,\,c]}$ and $j_!$ preserves coproducts, $j_!j^!(X) \in \bigcup_{n\le t+a-b+c}\overline{\langle G_{\mathcal{S}}\rangle}^{[n,\,t+a-b+c]}$. By the  triangle ($\dag$), we have $X \in \bigcup_{n\le m}\overline{\langle G_{\mathcal{S}}\rangle}^{[n,\,m]}$ for some integer $m$. This shows $\mathcal{S}^{sb}\cap{}^{\perp} (G_{\mathcal{S}}[1,+\infty)) \subseteq \bigcup_{n\le m}\overline{\langle G_{\mathcal{S}}\rangle}^{[n,\,m]}$, and therefore $\mathcal{S}$ has finite strong finitistic dimension.
\end{proof}

\begin{cor}\label{construct2}

Let the following be a recollement of weakly approximable triangulated categories
\begin{align*}
\xymatrixcolsep{4pc}\xymatrix{\mathcal{R} \ar[r]|{i_*=i_!} &\mathcal{S} \ar@<-2ex>[l]|{i^*} \ar@<2ex>[l]|{i^!} \ar[r]|{j^!=j^*}  &\mathcal{T}. \ar@<-2ex>[l]|{j_!} \ar@<2ex>[l]|{j_{*}}
}
\end{align*}
Suppose that $i_*$ preserves compact objects, $\mathcal{S}$ admits a bounded compact generator, and both $\mathcal{R}$ and $\mathcal{T}$ are Gorenstein. The following statements are equivalent.

$(1)$ $\mathcal{S}^{sb}$ admits a bounded $t$-structure.

$(2)$ $\mathcal{S}^{sb}=\mathcal{S}^b$.

$(3)$ $\mathcal{R}^{sb}=\mathcal{R}^b$ and $\mathcal{T}^{sb}=\mathcal{T}^b$.
\end{cor}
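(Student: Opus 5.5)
The equivalence $(1)\Leftrightarrow(2)$ follows directly from Theorem~\ref{main1}(2). First, Proposition~\ref{FGD}(1) will show that $\mathcal{S}$ has finite strong finitistic dimension. The subtlety is that Proposition~\ref{FGD}(1) requires $\mathcal{R}$ and $\mathcal{T}$ to have bounded compact generators. I would obtain these from the recollement. Take $G_{\mathcal{R}}$ to be a compact generator with $i_*(G_{\mathcal{R}})\in\mathcal{S}^c$. Since $i_*$ is fully faithful, $\Hom_{\mathcal{R}}(G_{\mathcal{R}},G_{\mathcal{R}}[i])\simeq\Hom_{\mathcal{S}}(i_*G_{\mathcal{R}},i_*G_{\mathcal{R}}[i])$. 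This vanishes for $i\ll 0$ because compact objects of $\mathcal{S}$ lie in $\langle G_{\mathcal{S}}\rangle\subseteq\mathcal{S}^b$, and $G_{\mathcal{S}}$ is bounded. The same argument with $j_!$, which is fully faithful and preserves compacts by \cite[Theorem 5.1]{N96}, shows that $j_!G_{\mathcal{T}}\in\mathcal{S}^c$ and hence that $G_{\mathcal{T}}$ is bounded.

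With this in hand, Proposition~\ref{FGD}(1) gives finite strong finitistic dimension for $\mathcal{R}$ and $\mathcal{T}$, and Proposition~\ref{construct1} transfers it to $\mathcal{S}$. Theorem~\ref{main1}(2) applied to $\mathcal{S}$ then gives $(1)\Leftrightarrow(2)$.

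For $(2)\Leftrightarrow(3)$ I would use \cite[Lemma 5.2(2)]{CCZ26}, which says that for a weakly approximable category with a bounded compact generator, $\mathcal{X}^{sb}=\mathcal{X}^b$ if and only if $\mathcal{X}$ has finite global dimension. The task is then to show that $\mathcal{S}$ has finite global dimension if and only if both $\mathcal{R}$ and $\mathcal{T}$ do. Two routes are available.

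\begin{itemize}
\item \emph{Cite a recollement result.} The companion paper \cite{CCZ26} likely contains such a statement (its Theorem~3.7 already produces the induced left recollement on strongly bounded objects), and the cleanest proof simply invokes it.
\item \emph{Argue directly.} Using Theorem~3.7 of \cite{CCZ26}, there is a left recollement $\mathcal{R}^{sb}\to\mathcal{S}^{sb}\to\mathcal{T}^{sb}$, and an analogous statement holds for bounded objects. The functors $i_*,i^*,j^*,j_!$ preserve boundedness up to shift because $i_*G_{\mathcal{R}}$ and $j_!G_{\mathcal{T}}$ are compact; this lets one compare $\Hom$-vanishing between $t$-structures.
\end{itemize}

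Concretely, for $(3)\Rightarrow(2)$: let $X\in\mathcal{S}^b$ and use the triangle $j_!j^*X\to X\to i_*i^*X\to$. Here $j^*X\in\mathcal{T}^b=\mathcal{T}^{sb}$, so $j_!j^*X\in\mathcal{S}^{sb}$. It then remains to show $i^*X\in\mathcal{R}^b$; this needs care, since $i^*$ need not preserve boundedness. Alternatively, use the other triangle $i_*i^!X\to X\to j_*j^*X$, where $i^!$ is a right adjoint. For $(2)\Rightarrow(3)$: $i_*$ and $j^*$ send bounded objects to bounded objects, and together with $i^*i_*\simeq\mathrm{id}$ and $j^*j_!\simeq\mathrm{id}$ this gives $\mathcal{R}^b\subseteq\mathcal{R}^{sb}$ and $\mathcal{T}^b\subseteq\mathcal{T}^{sb}$.

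The main obstacle is this boundedness bookkeeping for $i^*$, $i^!$ and $j_*$. I expect to resolve it via the finite-global-dimension characterisation, or via the $t$-structure gluing estimates available in \cite{CCZ26}.
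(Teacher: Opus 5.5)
Your treatment of $(1)\Leftrightarrow(2)$ is the paper's argument. You first get bounded compact generators on $\mathcal{R}$ and $\mathcal{T}$; the paper quotes \cite[Lemma~3.3]{CCZ26}, and your full-faithfulness argument via $i_*$ and $j_!$ proves it. Then Proposition~\ref{FGD}(1) applies to $\mathcal{R}$ and $\mathcal{T}$, Proposition~\ref{construct1} transfers to $\mathcal{S}$, and Theorem~\ref{main1}(2) finishes.

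The gap is $(2)\Leftrightarrow(3)$. The paper simply invokes \cite[Theorem~5.4]{CCZ26}, which needs no Gorenstein hypothesis. You do not identify that result. Translating both sides into finite global dimension via \cite[Lemma~5.2(2)]{CCZ26} only restates the problem, and your direct sketch is unfinished by your own account.

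One step of the sketch is also wrong as written. On the $\mathcal{T}$-side of $(2)\Rightarrow(3)$ you use $j^*j_!\simeq\mathrm{id}$, which needs $j_!(\mathcal{T}^b)\subseteq\mathcal{S}^b$. The fact that $j^*$ preserves bounded objects is irrelevant there, and nothing in the hypotheses gives the inclusion: $j_!$ has no left adjoint through which to test against $G_{\mathcal{S}}$. In fact it fails in general. Take the triangular matrix setting of Remark~\ref{Non-Gorenstein} with $A=k$, $B=k[x]/(x^2)$ and $M=k$. All hypotheses of the corollary hold, $j_!=\Lambda e\otimes^{\mathbb{L}}_{B}-$, and $\Lambda e\simeq M\oplus B$ as right $B$-modules. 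Hence $H^{-i}(j_!k)\simeq\Tor^B_i(k,k)\neq 0$ for every $i\geq 1$, so $j_!k$ is not bounded although $k$ is.

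Both directions close quickly once you use the right section. Two facts do the work. First, for a weakly approximable category, an object $X$ is bounded if and only if $\Hom(G[n],X)=0$ for $|n|\gg 0$ \cite[Corollary~3.10]{BNP23}. Second, $i_*$, $i^*$, $j_!$ and $j^*$ all preserve coproducts and compact objects, hence strongly bounded objects; for $j^*$ this holds because $i_*$ preserves compacts, as in the proof of Corollary~\ref{app2}.

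For $(2)\Rightarrow(3)$, let $Y\in\mathcal{R}^b$. Then $\Hom(G_{\mathcal{S}}[n],i_*Y)\simeq\Hom(i^*G_{\mathcal{S}}[n],Y)$ vanishes for $|n|\gg0$, so $i_*Y\in\mathcal{S}^b=\mathcal{S}^{sb}$ and $Y\simeq i^*i_*Y\in\mathcal{R}^{sb}$. For $Z\in\mathcal{T}^b$, replace $j_!$ by $j_*$: since $\Hom(G_{\mathcal{S}}[n],j_*Z)\simeq\Hom(j^*G_{\mathcal{S}}[n],Z)$, you get $j_*Z\in\mathcal{S}^{sb}$ and $Z\simeq j^*j_*Z\in\mathcal{T}^{sb}$.

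For $(3)\Rightarrow(2)$, the step you flagged is immediate. Let $X\in\mathcal{S}^b$. Then $j^*X\in\mathcal{T}^b=\mathcal{T}^{sb}$, so $j_!j^*X\in\mathcal{S}^{sb}\subseteq\mathcal{S}^b$, and the cone $i_*i^*X$ lies in $\mathcal{S}^b$. Since $i_*$ is fully faithful and $i_*G_{\mathcal{R}}$ is compact, $\Hom(G_{\mathcal{R}}[n],i^*X)\simeq\Hom(i_*G_{\mathcal{R}}[n],i_*i^*X)=0$ for $|n|\gg0$. Thus $i^*X\in\mathcal{R}^b=\mathcal{R}^{sb}$, and therefore $X\in\mathcal{S}^{sb}$.
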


\begin{proof}
Since $\mathcal{S}$ admits a bounded compact generator and $i_*$ preserves compact objects, both $\mathcal{R}$ and $\mathcal{T}$ admit bounded compact generators by \cite[Lemma~3.3]{CCZ26}.
By \cite[Theorem 5.4]{CCZ26}, $(2)$ and $(3)$ are equivalent, where the Gorenstein assumption on $\mathcal{R}$ and $\mathcal{T}$ is not needed.

Suppose that $\mathcal{R}$ and $\mathcal{T}$ are Gorenstein. By Proposition~\ref{FGD}(1), $\mathcal{R}$ and $\mathcal{T}$ have finite strong finitistic dimension. Then $(1)$ and $(2)$ are equivalent by Proposition \ref{construct1} and Theorem \ref{main1}(2).
\end{proof}

An application of Corollary \ref{construct2} to recollements of derived module categories is the following result.

\begin{cor}\label{construct3}
Let the following be a recollement of derived module categories of Artin algebras:
\begin{align*}
\xymatrixcolsep{4pc}\xymatrix{\D{\Lambda_1\Modcat} \ar[r]|{i_*=i_!} &\D{\Lambda_2\Modcat}\ar@<-2ex>[l]|{i^*} \ar@<2ex>[l]|{i^!} \ar[r]|{j^!=j^*}  &\D{\Lambda_3\Modcat}. \ar@<-2ex>[l]|{j_!} \ar@<2ex>[l]|{j_{*}}
}
\end{align*}
Suppose that $i_*(\Lambda_1)$ is compact in $\D{\Lambda_2\Modcat}$ and that both $\Lambda_1$ and $\Lambda_3$ are Gorenstein algebras. Then $\Kb{\Pmodcat{\Lambda_2}}$ admits a bounded $t$-structure if and only if $\Lambda_2$ has finite global dimension if and only if both $\Lambda_1$ and $\Lambda_3$ have finite global dimension.
\end{cor}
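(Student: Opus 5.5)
The plan is to reduce Corollary \ref{construct3} to Corollary \ref{construct2}, applied with $\mathcal{R}=\D{\Lambda_1\Modcat}$, $\mathcal{S}=\D{\Lambda_2\Modcat}$ and $\mathcal{T}=\D{\Lambda_3\Modcat}$. Then I translate the three conditions there into the module-theoretic statements.

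First I verify the hypotheses of Corollary \ref{construct2}.

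\emph{Weak approximability and bounded generator.} For an Artin algebra $\Lambda$, the category $\D{\Lambda\Modcat}$ is weakly approximable, as a derived category of a non-positive (here ordinary) DG ring. The generator $\Lambda$ is a bounded compact generator, since $\Hom(\Lambda,\Lambda[i])=0$ for $i\neq 0$. In particular $\mathcal{S}$ has a bounded compact generator.

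\emph{Compact objects.} Since $i_*(\Lambda_1)$ is compact and $\Lambda_1$ generates $\D{\Lambda_1\Modcat}^c=\langle \Lambda_1\rangle$, the functor $i_*$ preserves compact objects. Here I use that $i_*$ is a triangle functor and that $\D{\Lambda_2\Modcat}^c$ is thick.

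\emph{Gorenstein property.} By Example \ref{Gexam}(1), $\D{\Lambda_1\Modcat}$ and $\D{\Lambda_3\Modcat}$ are Gorenstein triangulated categories, because $\Lambda_1$ and $\Lambda_3$ are Gorenstein Artin algebras.

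Corollary \ref{construct2} then gives the chain of equivalences
\[
\Kb{\Pmodcat{\Lambda_2}}=\mathcal{S}^{sb}\ \text{has a bounded $t$-structure}
\iff \mathcal{S}^{sb}=\mathcal{S}^b
\iff \mathcal{R}^{sb}=\mathcal{R}^b \ \text{and}\ \mathcal{T}^{sb}=\mathcal{T}^b .
\]
Here I use the identification $\T^{sb}=\Kb{\Pmodcat{R}}$ and $\T^b=\Db{R\Modcat}$ from the introduction, Artin algebras being left coherent.

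It remains to show that, for an Artin algebra $\Lambda$, the equality $\Kb{\Pmodcat{\Lambda}}=\Db{\Lambda\Modcat}$ holds if and only if $\gd\Lambda<\infty$.

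\emph{Forward direction.} If the equality holds, take the direct sum $M$ of all simple modules. Then $M$ has finite projective dimension, so $\gd\Lambda=\pd M<\infty$. The point is that a bounded complex of projectives quasi-isomorphic to a module yields a finite projective resolution of that module. Alternatively, one can invoke Lemma \ref{lasttech}: it gives $\T^c=\T^b_c$, i.e.\ $\Kb{\prj{\Lambda}}=\Db{\Lambda\modcat}$, which forces finite global dimension.

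\emph{Reverse direction.} If $\gd\Lambda<\infty$, then every bounded complex of modules has a bounded projective resolution. This is the converse part of Lemma \ref{lasttech}, and it is also used in the proof of Corollary \ref{mainapp}(1).

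The main obstacle is just bookkeeping of hypotheses; in particular, the weak approximability of $\D{\Lambda\Modcat}$ is what lets Corollary \ref{construct2} apply at all.
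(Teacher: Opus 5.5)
Your proposal is correct and follows the paper's proof. Like the paper, you apply Corollary~\ref{construct2} after checking weak approximability, the bounded compact generator $\Lambda_2$, that $i_*$ preserves compact objects, and the Gorenstein property via Example~\ref{Gexam}(1). The only difference is how you turn $\mathcal{S}^{sb}=\mathcal{S}^b$ into finite global dimension: you use the elementary module-theoretic argument (bounded projective resolutions and projective dimension of the simples), whereas the paper cites \cite[Lemma~5.2(2) and Remark~5.3]{CCZ26} and passes through the triangulated notion of global dimension.
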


\begin{proof}
As shown in Example \ref{Gexam}(1), the derived module categories of Gorenstein Artin algebras are Gorenstein and weakly approximable. Note that $\Lambda_2$ is a bounded compact generator of $\D{\Lambda_2\Modcat}$.
Recall from \cite[Lemma 5.2(2)]{CCZ26} that, for a weakly approximable triangulated category $\T$ with a bounded compact generator, the equality $\Tsb = \T^{b}$ holds if and only if $\T$ has finite global dimension. In the case $\T=\mathscr{D}(S\Modcat)$ for an ordinary ring $S$, we see from \cite[Remark 5.3]{CCZ26} that $\T$ has finite global dimension if and only if $S$ has finite global dimension.
Thus Corollary \ref{construct3} follows from Corollary \ref{construct2}.
\end{proof}

In the literature, there are several methods for constructing recollements of derived module categories, especially via stratifying ideals of algebras (see \cite{CPS96}) or noncommutative tensor products (see \cite{CX19, CX21}).

Let $S$ be an arbitrary ring and $e\in S$ an idempotent (i.e., $e^2=e$). Recall from  \cite[Definition~2.1.1]{CPS96} that $SeS$ is called a \emph{stratifying ideal} of $S$ if the multiplication map $Se\otimes_{eSe}eS\to SeS$, sending $s_1e\otimes es_2$ to $s_1es_2$ for $s_1,s_2\in S$, is an isomorphism and $\Tor^{eSe}_i(Se,eS)=0$ for all $i>0$. This is also equivalent to saying that $\Tor_i^S(S/SeS, S/SeS)=0$ for all $i>0$. Thus if either ${_S}SeS$ or $SeS_S$ is flat, then $SeS$ is  stratifying.

\begin{cor}\label{app2}
Let $\Lambda$ be an Artin algebra and let $e\in\Lambda$ be an idempotent such that $\Lambda e\Lambda$ is a stratifying ideal of $\Lambda$. Suppose that

$(1)$ both $e\Lambda e$ and $\Lambda/\Lambda e\Lambda$ are Gorenstein algebras and

$(2)$ the $e\Lambda e$-module $e\Lambda$ has finite projective dimension (equivalently, the $\Lambda$-module $\Lambda e\Lambda$ has finite projective dimension).

Then $\Kb{\Pmodcat{\Lambda}}$ admits a bounded $t$-structure if and only if $\Lambda$ has finite global dimension.
\end{cor}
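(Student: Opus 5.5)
The plan is to deduce Corollary~\ref{app2} from Corollary~\ref{construct3}. We realise the stratifying ideal as a recollement of derived module categories of the right shape and then check the hypotheses.

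First, since $\Lambda e\Lambda$ is a stratifying ideal, the classical result of Cline--Parshall--Scott \cite{CPS96} gives a recollement
\[
\xymatrixcolsep{4pc}\xymatrix{\D{\Lambda/\Lambda e\Lambda\Modcat} \ar[r]|{i_*} &\D{\Lambda\Modcat}\ar@<-2ex>[l]|{i^*} \ar@<2ex>[l]|{i^!} \ar[r]|{j^!}  &\D{e\Lambda e\Modcat}, \ar@<-2ex>[l]|{j_!} \ar@<2ex>[l]|{j_{*}}
}
\]
with the following functors:
\begin{itemize}
\item $i_*$ is restriction along the surjection $\Lambda\to\Lambda/\Lambda e\Lambda$;
\item $j^!=e\Lambda\otimes_\Lambda- \simeq \rHom_\Lambda(\Lambda e,-)$;
\item $j_!=\Lambda e\otimesL_{e\Lambda e}-$.
\end{itemize}
The stratifying condition $\Tor_i^\Lambda(\Lambda/\Lambda e\Lambda,\Lambda/\Lambda e\Lambda)=0$ for $i>0$ is exactly what makes $i_*$ fully faithful on unbounded derived categories. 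Here $e\Lambda e$ and $\Lambda/\Lambda e\Lambda$ are again Artin algebras, so we are in the setting of Corollary~\ref{construct3} with $\Lambda_1=\Lambda/\Lambda e\Lambda$, $\Lambda_2=\Lambda$ and $\Lambda_3=e\Lambda e$. Hypothesis (1) supplies the Gorenstein assumption on $\Lambda_1$ and $\Lambda_3$.

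The remaining hypothesis of Corollary~\ref{construct3} is that $i_*(\Lambda_1)={}_\Lambda(\Lambda/\Lambda e\Lambda)$ is compact in $\D{\Lambda\Modcat}$, i.e.\ that $\Lambda/\Lambda e\Lambda$ has finite projective dimension as a $\Lambda$-module (it is finitely generated, so it is then perfect). This is where hypothesis (2) enters, and I expect it to be the only substantive step. From the exact sequence $0\to\Lambda e\Lambda\to\Lambda\to\Lambda/\Lambda e\Lambda\to 0$ we get
\[
\pd_\Lambda(\Lambda/\Lambda e\Lambda)\le \pd_\Lambda(\Lambda e\Lambda)+1,
\]
so it suffices that $\pd_\Lambda(\Lambda e\Lambda)<\infty$.

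To obtain this from the formulation via $e\Lambda$, use the stratifying isomorphism $\Lambda e\Lambda\simeq \Lambda e\otimes_{e\Lambda e}e\Lambda$ together with the vanishing of higher Tor, which gives $\Lambda e\Lambda\simeq j_!(e\Lambda)$ in $\D{\Lambda\Modcat}$. Now $j_!$ sends $e\Lambda e$ to $\Lambda e$, which is projective, so it preserves compact objects. Hence if ${}_{e\Lambda e}e\Lambda$ has finite projective dimension, i.e.\ is perfect, then $\Lambda e\Lambda$ is perfect over $\Lambda$. Conversely, $j^!(\Lambda e\Lambda)=e\Lambda e\Lambda=e\Lambda$ and $j^!$ preserves compacts (it equals $\rHom_\Lambda(\Lambda e,-)$ with $\Lambda e$ projective, and it has a coproduct-preserving right adjoint $j_*$). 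This justifies the parenthetical equivalence in (2).

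With all hypotheses verified, Corollary~\ref{construct3} gives the chain of equivalences: $\Kb{\Pmodcat{\Lambda}}$ admits a bounded $t$-structure if and only if $\Lambda$ has finite global dimension. This is the claim. As a byproduct it also shows this happens if and only if $e\Lambda e$ and $\Lambda/\Lambda e\Lambda$ both have finite global dimension.
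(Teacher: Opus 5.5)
Your overall plan is the paper's. You realise the stratifying ideal as a recollement with $\Lambda_1=\Lambda/\Lambda e\Lambda$, $\Lambda_2=\Lambda$, $\Lambda_3=e\Lambda e$, check that $i_*(\Lambda_1)$ is compact, and apply Corollary~\ref{construct3}. For the implication ``${}_{e\Lambda e}e\Lambda$ perfect $\Rightarrow$ ${}_\Lambda\Lambda e\Lambda$ perfect'' your route differs from the paper's, which invokes [BR07, Ch.~IV, Prop.~1.11]: $i_*$ preserves compact objects iff $j^*$ does. Your argument uses the identification $j_!(e\Lambda)\simeq\Lambda e\otimesL_{e\Lambda e}e\Lambda\simeq\Lambda e\Lambda$ and the fact that $j_!$ preserves compact objects. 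It is correct and more direct.

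\textbf{Gap.} The converse half of the parenthetical equivalence in (2) is circular.
\begin{itemize}
\item You claim $j^!$ preserves compact objects because its right adjoint $j_*$ preserves coproducts. But $j_*\simeq\rHom_{e\Lambda e}(e\Lambda,-)$ preserves coproducts precisely when ${}_{e\Lambda e}e\Lambda$ is compact: take $H^0$, which commutes with coproducts.
\item Likewise, $j^!$ preserves compact objects iff $j^!(\Lambda)=e\Lambda$ is compact. So you are assuming exactly what you want to prove.
\item This is not automatic. Take $\Lambda=\bigl(\begin{smallmatrix} A & M\\ 0 & B\end{smallmatrix}\bigr)$ and $e=\bigl(\begin{smallmatrix} 1_A & 0\\ 0 & 0\end{smallmatrix}\bigr)$. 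Then $\Lambda e\Lambda=e\Lambda$ is a projective right $\Lambda$-module, hence stratifying. Yet ${}_{A}e\Lambda\simeq A\oplus M$ need not be perfect.
\item The description $j^!\simeq\rHom_\Lambda(\Lambda e,-)$ only shows that $j^!$ commutes with coproducts, which does not help here.
\end{itemize}

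\textbf{Repair.} This stays within your own approach. The functor $j_!$ is fully faithful and preserves coproducts, so it reflects compactness:
\[
\Hom(X,\bigoplus_i Y_i)\simeq\Hom(j_!X,\bigoplus_i j_!Y_i)\simeq\bigoplus_i\Hom(j_!X,j_!Y_i)\simeq\bigoplus_i\Hom(X,Y_i)
\]
whenever $j_!X$ is compact. Hence compactness of $\Lambda e\Lambda\simeq j_!(e\Lambda)$ forces compactness of $e\Lambda$. Alternatively, use the paper's citation of [BR07].

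The flaw affects only the parenthetical claim. Under either formulation of (2), your argument shows that ${}_\Lambda\Lambda e\Lambda$, and hence ${}_\Lambda(\Lambda/\Lambda e\Lambda)$, has finite projective dimension. So the main equivalence is correctly deduced from Corollary~\ref{construct3}.
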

\begin{proof}
Since $\Lambda e\Lambda$ is a stratifying ideal of $\Lambda$,  there exists a recollement of derived categories
\begin{align*}
\xymatrixcolsep{4pc}\xymatrix{
  \D{(\Lambda/\Lambda e\Lambda)\Modcat}
  \ar[r]|(0.56){i_*=i_!}
  & \D{\Lambda\Modcat}
  \ar@<-2ex>[l]|(0.43){i^*}
  \ar@<2ex>[l]|(0.43){i^!}
  \ar[r]|(0.48){j^!=j^*}
  & \D{e\Lambda e\Modcat}
  \ar@<-2ex>[l]|(0.52){j_!}
  \ar@<2ex>[l]|(0.52){j_{*}}
}
\end{align*}\vspace*{-\belowdisplayskip}%

\noindent where $i_*$ is the derived restriction functor induced from the canonical surjection $\Lambda\to\Lambda/\Lambda e\Lambda$ and $j^*=e\Lambda \otimesL_{\Lambda}-$ (for example, see \cite[Example 4.5(4)]{AKL11}). Moreover, by \cite[Chapter IV, Proposition 1.11]{BR07}, $i_*$ preserves compact objects if and only if so does $j^*$. On the one hand, $i_*$ preserves compact objects if and only if $\Lambda/\Lambda e\Lambda\in \D{\Lambda\Modcat}^{c}$ if and only if $\Lambda e\Lambda\in \D{\Lambda\Modcat}^{c}$. On the other hand, $j^*$ preserves compact objects if and only if $e\Lambda\in \D{e\Lambda e\Modcat}^{c}$.
Thus ${_\Lambda}\Lambda e\Lambda$ has finite projective dimension if and only if
${_{e\Lambda e}}e\Lambda$ has finite projective dimension. Now, Corollary \ref{app2} follows from Corollary \ref{construct3}.
\end{proof}

\begin{rem}\label{Non-Gorenstein}
In Corollary \ref{app2}, the assumptions of $(1)$ and $(2)$ do not imply that $\Lambda$ is a Gorenstein algebra. A counterexample can be constructed from triangular matrix algebras.

Let $A$ and $B$ be Gorenstein Artin algebras, and let $M$ be a finitely generated $A$-$B$-bimodule.
We consider the triangular matrix algebra $\Lambda:= \bigl( \begin{smallmatrix} A & M \\ 0 & B \end{smallmatrix} \bigr)$ and  $e:= \bigl( \begin{smallmatrix} 0 & 0 \\ 0 & 1_{B} \end{smallmatrix} \bigr)$. Then $e\Lambda e \simeq B$ and $\Lambda / \Lambda e\Lambda \simeq A$ as algebras, and $\Lambda e\Lambda=\Lambda e$. In particular, ${}_\Lambda \Lambda e\Lambda$ is projective and ${_B}e\Lambda$ is projective. This means that $\Lambda$ satisfies all the assumptions of Corollary \ref{app2}. However, by \cite[Theorem~3.3]{C09}, $\Lambda$ is Gorenstein if and only if both ${}_{A}M$ and $M_{B}$ have finite projective dimension. Thus $\Lambda$ is not Gorenstein in general.
\end{rem}

\medskip

{\bf Acknowledgements.}
Jinbi Zhang was partially supported by the National Natural Science Foundation of China (Grant No.~12401038).

{\footnotesize
\smallskip
Hongxing Chen

School of Mathematical Sciences  \&  Academy for Multidisciplinary Studies, Capital Normal University, 100048
Beijing,

P. R. China;

{\tt Email: chenhx@cnu.edu.cn (H.X.Chen)}

\smallskip
Xiaohu Chen

School of Mathematical Sciences, Capital Normal University, 100048 Beijing, P. R. China;

{\tt Email: xiaohu.chen@cnu.edu.cn (X.H.Chen)}

\smallskip
Jinbi Zhang

School of Mathematical Sciences, Anhui University, 230601 Hefei, P. R. China;

{\tt Email: zhangjb@ahu.edu.cn (J.B.Zhang)}}

\end{document}